\documentclass[a4paper,reqno]{amsart}

\usepackage{amsmath,amsfonts,amsthm,amssymb,latexsym,mathrsfs,enumerate,pgfplots,tikz-cd,tkz-graph,multirow,booktabs,mathtools}
\usepackage[colorlinks=true,linkcolor=blue,citecolor=red,urlcolor=violet,bookmarksdepth=subsection,final]{hyperref}

\usepackage{mnotes}

\usetikzlibrary{decorations.markings,automata,arrows,positioning,calc}

\usepackage{graphicx}
\graphicspath{ {.} }

\addtolength{\hoffset}{-1cm}
\addtolength{\textwidth}{2cm}

\DeclareMathOperator{\diag}{\mathrm{diag}}

\DeclareMathOperator{\coker}{\mathrm{coker}}
\DeclareMathOperator{\Hom}{\mathrm{Hom}}

\DeclareMathOperator{\gln}{\mathrm{GL}_n}

\DeclareMathOperator{\Aut}{\mathrm{Aut}}

\DeclareMathOperator{\rank}{\mathrm{rank}}
\DeclareMathOperator{\tor}{\mathrm{Tor}}

\DeclareMathOperator{\modulo}{\mathrm{mod}}
\DeclareMathOperator{\sgn}{\mathrm{sgn}}

\newcommand{\nn}{\mathbb{N}}
\newcommand{\zz}{\mathbb{Z}}
\newcommand{\qq}{\mathbb{Q}}

\newcommand{\cc}{\mathbb{C}}
\newcommand{\pp}{\mathbb{P}}
\newcommand{\ee}{\mathbb{E}}
\newcommand{\dd}{\mathbb{D}}
\newcommand{\hh}{\mathbb{H}}
\newcommand{\g}{\widehat{\mathbb{G}}}

\newcommand{\ppm}{\mathcal{P}_{\bar m}}
\newcommand{\oo}{\mathcal{O}}
\newcommand{\cs}{\mathrm{C}^*}

\makeatletter
\@namedef{subjclassname@2020}{%
  \textup{2020} Mathematics Subject Classification}
\makeatother

\newtheorem {theorem}{Theorem}[section]

\newtheorem {proposition}[theorem]{Proposition}

\newtheorem {conjecture}[theorem]{Conjecture}

\theoremstyle {definition}

\newtheorem {example}[theorem]{Example}
\newtheorem {definition}[theorem]{Definition}
\newtheorem {remark}[theorem]{Remark}
\newtheorem {question}[theorem]{Question}
\newtheorem {notation}[theorem]{Notation and terminology}

\numberwithin{equation}{section}

\title{Operator $K$-theoretic analysis of random adjacency matrices}

\author[B.~Jacelon]{Bhishan Jacelon}
\address[B.~Jacelon]{
Institute of Mathematics of the Czech Academy of Sciences\\ \v{Z}itn\'{a} 25\\115 67 Prague 1\\Czech Republic}
\email{jacelon@math.cas.cz}

\author[I.~Khavkine]{Igor Khavkine}
\address[I.~Khavkine]{
Institute of Mathematics of the Czech Academy of Sciences\\ \v{Z}itn\'{a} 25\\115 67 Prague 1\\Czech Republic}
\address{Charles University, Faculty of Mathematics and Physics\\ Sokolovsk\'a 83\\ 186 75 Prague 8\\Czech Republic}
\email{khavkine@math.cas.cz}

\subjclass[2020]{46L35, 46L80, 15B52 , 60B20, 37B10}
\keywords{Random graphs, random integer matrices, $\cs$-algebras, symbolic dynamics, Cuntz--Krieger algebras, $K$-theory, Fra\"{i}ss\'{e} theory.}

\begin{document}

\begin{abstract} 
We appeal to results from combinatorial random matrix theory to deduce that various random graph $\mathrm{C}^*$-algebras are asymptotically almost surely Kirchberg algebras with trivial $K_1$. This in particular implies that, with high probability, the stable isomorphism classes of such algebras are exhausted by variations of Cuntz algebras that we term `Cuntz polygons'. These probabilistically generic algebras can be assembled into a Fra\"{i}ss\'{e} class whose limit structure $\mathbb{G}$ is consequently relevant to any $K$-theoretic analysis of finite graph $\mathrm{C}^*$-algebras. We also use computer simulations to experimentally verify the behaviour predicted by theory and to estimate the asymptotic probabilities of obtaining stable isomorphism classes represented by actual Cuntz algebras. These probabilities depend on the frequencies with which the Sylow $p$-subgroups of $K_0$ are cyclic and in some cases can be computed from existing theory. For random symmetric $r$-regular multigraphs, current theory can describe these frequencies for finite sets of odd primes $p$ not dividing $r-1$. A novel aspect of the collected data is the observation of new heuristics outside of this case, leading to a conjecture for the asymptotic probability of these graphs yielding $\mathrm{C}^*$-algebras stably isomorphic to Cuntz algebras. For other models of random multigraphs including Bernoulli (di)graphs, the data also allow us to estimate and heuristically explain the (surprisingly high) asymptotic probabilities of \emph{exact} isomorphism to a Cuntz algebra. Recognising the role played by Cuntz--Krieger algebras in the theory of symbolic dynamics, we also collect supplemental data to estimate (and in some cases, actually compute) the asymptotic probability of a random subshift of finite type being flow equivalent to a full shift.
\end{abstract}

\maketitle

\section{Introduction} \label{section:intro}

In this paper, we investigate properties of the asymptotic distributions of certain families of random integer-valued matrices. The matrices in question encode adjacency in graphs (or rather, directed multigraphs) created by randomly adding edges to a large number of vertices (see Section~\ref{section:models}), and thus offer a means of generating random graph $\cs$-algebras (see Section~\ref{section:polygons}). The present work is therefore a continuation of \cite{Jacelon:2023aa}, which initiated the study of random constructions of several classes of $\cs$-algebras of interest to the Elliott classification programme.

The graphs that we consider are with high probability strongly connected. The associated $\cs$-algebras are therefore purely infinite and simple, so they fall under the remit of the Kirchberg--Phillips classification theorem \cite[Theorem 4.2.1]{Phillips:2000fj} and indeed R{\o}rdam's earlier classification \cite[Theorem 6.5]{Rordam:1995aa} of simple Cuntz--Krieger algebras (see Section~\ref{section:polygons}). In other words, their (stable) isomorphism classes are determined by operator $K$-theory. Here, \emph{stable isomorphism} means isomorphism up to tensoring with the $\cs$-algebra of compact operators on a separable, infinite-dimensional Hilbert space. Developed from algebraic topology, \emph{$K$-theory} $(K_0(\cdot),K_1(\cdot))$ is a powerful tool for analysis in the category of $\cs$-algebras. Its role as a classifying functor is what allows us to provide $\cs$-algebraic interpretations of the various random events examined in the sequel. With this in mind, we adopt the language of graph $\cs$-algebras throughout the article. 

In \cite[\S6]{Jacelon:2023aa}, it was explained how some of the powerful machinery developed by Wood \cite{Wood:2017tk} could be adapted to $K$-theoretic analysis of graph algebras. We considered multigraphs $E$ built from unions of perfect matchings, and observed that the distributions of Sylow subgroups of random cokernels established in \cite{Nguyen:2018vh} were also present for $K_0(\cs(E))$, which is given by the cokernel of the transpose of the multigraph's adjacency matrix shifted by the identity. The original motivation for the current work was to experimentally verify the $K_0$-behaviour predicted by this theory. We were indeed able to do this (see Section~\ref{section:stats}), but what immediately became apparent from the data was something overlooked in \cite{Jacelon:2023aa}, namely, asymptotically almost-sure triviality of the $K_1$-group, which is given by the kernel of the shifted, transposed adjacency matrix.

The singularity problem for random matrices has a storied history in combinatorics (see the discussion and references in \cite{Nguyen:2018vh}). It posits that sufficiently random square matrices are nonsingular with probability approaching $1$ as the matrix size grows to infinity. Our working hypothesis is that this should be applicable not just to the adjacency matrices $A_E$ of appropriate random graphs $E$, but equally well to $M_E=A_E^t-I$. As alluded to earlier, this matrix determines $K_0(\cs(E))=\coker M_E$ and $K_1(\cs(E))=\ker M_E$ (see the discussion surrounding \eqref{eqn:kt}), and its nonsingularity exactly means triviality of $K_1$. We will see in Section~\ref{section:models} that this applies to \emph{Bernoulli digraphs} $\dd_{n,q}$ in which each possible edge (allowing loops but not multiple edges) occurs independently with fixed probability $q$, and also to random $r$-regular multigraphs $\g_{n,r}$ built from perfect matchings.

Because of this $K_1$-triviality, and in light of $K$-theoretic classification, the stable isomorphism classes of our random graph algebras (and, given the scope of the singularity problem, likely more generally) can with high probability be described by rather simple directed multigraphs. These representatives, which we call \emph{Cuntz polygons} (see Figure~\ref{fig:turtle}), are generalisations of Cuntz algebras in the sense that $K_0$ need not be cyclic but can be any finite abelian group. Stable isomorphism to an actual Cuntz algebra is detected by cyclicity of $K_0$. As it turns out, for large Bernoulli digraphs this happens about $85\%$ of the time, a probability which can be computed asymptotically from existing theory (see Theorem~\ref{thm:bernoulli}) and which is supported by the data (Table~\ref{table:dq}). For random $r$-regular multigraphs, there are currently two obstacles to computing the asymptotic probability of the graph algebra being stably isomorphic to a Cuntz algebra. They both stem from the fact that cyclicity of $K_0$ is equivalent to cyclicity of the Sylow $p$-subgroups of $K_0$ for all primes $p$. First, although the probability of cyclicity of Sylow $p$-subgroups is known for odd primes $p$ that do not divide $r-1$, it is unknown for $p\mid 2(r-1)$ (that is, for $p=2$ or $p$ an odd prime dividing $r-1$). Second, the known theory covers the case of only \emph{finitely many} primes, not all primes simultaneously.  See the discussion around Theorem~\ref{thm:regular}, which is adapted from \cite[Theorem 1.6]{Nguyen:2018vh}.

However, we conjecture from the data that the asymptotic probability of the graph algebra being stably isomorphic to a Cuntz algebra still converges to the product over all primes $p$ of the asymptotic probability of cyclicity of the Sylow $p$-subgroup of $K_0$ (see Table~\ref{table:adams}). When the regularity degree is one more than a power of two, we estimate this product to be about $40\%$ (see Table~\ref{table:2k1}). As for cyclicity of the Sylow $p$-subgroup of $K_0(\g_{n,r})$ for large $n$ when $p\mid 2(r-1)$, the data reveal new heuristics (see Table~\ref{table:adams},  Table~\ref{table:newdist} and Conjecture~\ref{conjecture:regular}).

On the other hand, the \emph{symmetric} versions $\ee_{n,q}$ of $\dd_{n,q}$ (that is, \emph{Erd\H{o}s--R\'{e}nyi graphs} allowing loops) lead to $\cs$-algebras $\cs(\ee_{n,q})$ for which \emph{all} Sylow $p$-subgroups of $K_0(\cs(\ee_{n,q}))$, including $p=2$, follow the limiting behaviour \eqref{eqn:ncyclic}, \eqref{eqn:cyclicn} and \eqref{eqn:allcyclic}. This is proved in Theorem 3.12 of Wood's article \cite{Wood:2023aa}, where it is also stated that forthcoming work will extend this result to infinitely many primes. Moreover, asymptotic triviality of $K_1$ also holds for these algebras, for the same reason as for the algebras $\cs(\g_{n,r})$ (namely, \cite[Corollary 4.2]{Nguyen:2018vh}). One would therefore expect about $79\%$ of the algebras $\cs(\ee_{n,q})$ for large $n$ to be stably isomorphic to Cuntz algebras.

Cuntz--Krieger algebras are intimately connected to the theory of symbolic dynamics. If the (directed multi-) graph $E$ is strongly connected and does not consist of a single cycle (equivalently, if the adjacency matrix is irreducible and is not a permutation matrix), then the stable isomorphism class of the graph algebra $\cs(E)$ (which is a Cuntz--Krieger algebra) is an invariant for the flow equivalence class of the associated edge shift $\sigma \colon \Sigma_E \to \Sigma_E$. Here, $\Sigma_E$ is the two-sided infinite path space
\begin{equation} \label{eqn:shift}
\Sigma_E = \left\{(e_i)_{i\in\zz}\in (E^1)^\zz \mid s(e_{i+1})=r(e_i) \text{ for every } i\in\zz\right\}
\end{equation}
(see Section~\ref{section:polygons} for notation) and $\sigma$ is the shift map that sends $(e_i)_{i\in\zz}$ to $(e_{i+1})_{i\in\zz}$. \emph{Flow equivalence} between topological dynamical systems $(\Sigma_E,\sigma_E)$ and $(\Sigma_F,\sigma_F)$ is a weakening of conjugacy that refers to equivalence of their suspension flows (see, for example, \cite[\S13.6]{Lind:1995wp}). In our setting, flow equivalence can be characterised by certain graph \emph{moves} (see \cite{Sorensen:2013aa,Carlsen:2019aa}) and implies the existence of a \emph{diagonal-preserving} stable isomorphism between the graph algebras (see \cite[\S4]{Cuntz:1980hl} and \cite{Carlsen:2019aa}). Mere stable isomorphism of the associated graph algebras (or in other words, isomorphism of their $K_0$ groups) is strictly weaker than flow equivalence. By \cite{Franks:1984aa}, the necessary and sufficient additional data is the sign of the determinant of $I-A_E$, which when combined with the $K_0$ group yields the \emph{signed Bowen--Franks group}
\begin{equation} \label{eqn:bf}
\mathrm{BF}_+(E)=\sgn(\det(I-A_E))\coker(I-A_E),
\end{equation}
where we notationally attach the sign label `$+$' to the group $\coker(I-A_E)$ if $\det(I-A_E)>0$, or `$-$' if $\det(I-A_E)<0$. For the graph $E$ with one vertex and $n$ loops (whose shift space $\Sigma_E$ is the full shift on an alphabet of $n$ symbols and whose graph algebra is the Cuntz algebra $\oo_n$), $\mathrm{BF}_+(E)=-\zz/(n-1)\zz$. Collecting this sign data in conjunction with a tally of cyclicity therefore allows us to estimate the asymptotic probabilities of a random subshift of finite type (that is, the edge shift associated to a random graph) being flow equivalent to a full shift. For Bernoulli digraphs $\dd_{n,q}$, the percentage probability appears to be about $42$ (see Conjecture~\ref{conjecture:fullshift} and Table~\ref{table:dnqextra}). In certain special cases (Theorem~\ref{thm:bf}), we can prove that this is indeed the right number (thus providing a question to an answer of D.~Adams \cite{Adams:1995aa}). The asymptotic behaviour of the determinant for $\ee_{n,q}$ and $\g_{n,r}$, about which we have formulated and left open Question~\ref{q1}, is less clear. However, the full-shift probability for large graphs is still nontrivial (see Tables~\ref{table:enqextra},~\ref{table:gnrextra}).

As for the property of being \emph{exactly} (rather than stably) isomorphic to a Cuntz algebra, the asymptotic probability of this event does not appear to be readily deducible from existing random matrix theory. (There is also no available dynamical interpretation of the corresponding equivalence relation for the associated edge shifts, although there is a geometric description via graph moves; see \cite{Arklint:2022aa}.) Nonetheless, by keeping track of the class of the unit we are able to provide estimates (together with heuristic explanations; see Conjecture~\ref{conjecture:bernoulli}, Table~\ref{table:dnqextra} and Table~\ref{table:enqextra}). The surprising conclusions are that, if $n$ is large, then $\cs(\dd_{n,q})$ is about $51\%$ likely to be isomorphic to a Cuntz polygon and is about $44\%$ likely to be isomorphic to an actual Cuntz algebra. For $\cs(\ee_{n,q})$, these numbers are $61\%$ and $48\%$, respectively. In short, it's about a coin toss whether a chance encounter with a large graph $\cs$-algebra is in fact a meeting with a Cuntz algebra.

How should this (and our other probabilistic observations) be interpreted? The first thing we can say is that theorems about Cuntz polygons become asymptotically almost-sure theorems about the stable isomorphism classes of random graph $\cs$-algebras. For example, in Section~\ref{subsection:fraisse} we make the observation that Cuntz polygons $\ppm$ form a \emph{Fra\"{i}ss\'{e} class}. The Fra\"{i}ss\'{e} limit $\mathbb{G}$ of this class (which by \cite{Szymanski:2002yq} is the graph algebra of an infinite, strongly connected graph) is then a structure that is asymptotically almost surely universal and homogeneous for sufficiently random unital graph algebras. See Theorem~\ref{thm:homogeneous} and also Section~\ref{subsection:contiguity}, in which we emphasise the point that, while we have opted to work with distributions amenable to calculations of probability, similar conclusions should hold for any distribution that shares the same asymptotically almost-sure events (so-called \emph{contiguous} distributions). When applied to Bernoulli digraphs as in Theorem~\ref{thm:uniform}, this general slogan materialises into the concrete observation that \emph{most} of the $\cs$-algebras associated to finite digraphs on large vertex sets with suitably many edges are stably isomorphic to Cuntz polygons. The other message is that complex structures are rare, or put another way, the objects that we tend to encounter are the ones with minimal symmetry. On the flip side, when there is imposed extra structure (such as graph regularity), it seems that we tend to see more complicated automorphism groups with greater frequency. (It is also interesting that $\cs(\g_{n,r})$ being isomorphic to a Cuntz polygon is a very rare event (see Table~\ref{table:gnrextra}). Question~\ref{q2}, whether this probability asymptotes to zero, is left open.) The central point of the present work is that for graph $\cs$-algebras these various statements can be made quantifiably precise.

We have gathered the data reported in Section~\ref{section:stats} by running computer simulations. Our code generates samples of random graphs $\g_{n,r}$, $\dd_{n,q}$ or $\ee_{n,q}$ (our typical sample size being $m=10^5$) and collects $K$-theoretic data. 
The primary tool for computing cokernels of integer matrices, and hence the $K$-theory of graph algebras, is the Smith normal form (SNF) algorithm. It has been used extensively in the literature (see, for example, \cite{Rordam:1995aa, Cornelissen:2008aa, Eilers:2021aa}) and is an integral piece of our analysis of the graphs. The SNF algorithm provides, for a given $M_E\in M_n(\zz)$, a diagonalisation $UM_EV=D=\diag(d_1,\dots,d_n)$, where $U,V\in\gln(\zz)$ and the $d_i$ are integers with $d_i|d_{i+1}$ for $1\le i\le n-1$. Then,
\begin{equation} \label{eqn:snf}
\coker M_E \cong \coker D \cong \zz/d_1\zz \oplus\dots \oplus \zz/d_n\zz.
\end{equation}
From this, we can also determine the structure of the $p$-Sylow subgroup of $\coker M_E \cong K_0(\cs(E))$ for any prime $p$. We also record:
\begin{itemize}
\item whether or not each graph is strongly connected;
\item whether $K_0$ is cyclic, by removing any $0$ entries from the list $L=[d_1,\dots,d_n]$ (storing these as $K_1$), and also any $1$s (as these do not contribute to the cokernel), then checking whether there remains at most one entry in $L$;
\item the rank of $K_1$ (in other words, the number of $0$ entries removed from $L$);
\item tallies of cyclicity of Sylow $p$-subgroups for specified primes $p$, as well as instances where these subgroups are of the form $\zz/(p^N\zz)$ or $(\zz/p\zz)^N$, for integers $N$ that are small enough for these events to be statistically observable (see \eqref{eqn:ncyclic} and \eqref{eqn:cyclicn});
\item whether or not the class of the unit in $K_0$ is in the same automorphism orbit as the `canonical' class, which in instances of trivial $K_1$ means that we have exact rather than stable isomorphism between $\cs(E)$ and the corresponding Cuntz polygon (see Section~\ref{subsection:unit});
\item the sign of the determinant of $I-A_E=-M^t_E$ (and hence, if $K_0$ is also cyclic, whether or not the corresponding edge shift is flow equivalent to a full shift);
\item $99\%$ confidence intervals for the various proportions above, using the normal approximation to the binomial distribution.
\end{itemize}
The simulation and analysis code is written in Python. For efficiency reasons, the SNF is computed by calling the PARI library~\cite{PARI2}, which is written in C. Both the code and experimental data are available on GitHub.\footnote{\url{https://github.com/bjacelon/random-graph-k-theory}} There are three reasons for our decision to include a substantial offering of empirical data. First, we are demonstrating the reliability of our experimental process by comparing with behaviour predicted by theory, whenever the appropriate theory is available. Next, in situations where there is as yet no theoretical prediction, the estimates that we extract from the data can then be expected to carry some weight. Last, but not least, we wish to document the present mathematical endeavour as the conversation between theory, practice and scientific discovery that it truly was.

We have organised the article as follows. Section~\ref{section:polygons} provides some background on the $\cs$-algebras associated to finite graphs, as well as their $K$-theoretic classification, and introduces Cuntz polygons. Section~\ref{section:models} describes the relevant random graph models and some asymptotic behaviour guaranteed by random matrix theory. Finally, Section~\ref{section:stats} contains empirical data that either experimentally verify the theory or provide estimates for probabilities that are not yet theoretically computable.

\subsection*{Acknowledgements} BJ was supported by the GA\v{C}R project 22-07833K and partially supported by the Simons Foundation Award No 663281 granted to the Institute of Mathematics of the Polish Academy of Sciences for the years 2021--2023. He is grateful to the organisers of the stimulating `Graph Algebras' and `Generic Structures' conferences held at the Bedlewo Centre in 2023. IK is partially supported by the Praemium Academiae of M.~Markl and by Czech science foundation (GA\v{C}R) under the grant GA22-00091S. This collaboration would not have been possible without the excellent working environment at the Institute of Mathematics of the Czech Academy of Sciences (RVO: 67985840). The computer code was written with some facilitation by ChatGPT~\cite{ChatGPT}. Both authors are indebted to S{\o}ren Eilers and the anonymous referee of an earlier version of this article for pointing out the connections to symbolic dynamics and for encouraging a deeper dive into the question of exact vs stable isomorphism between random graph algebras and Cuntz polygons.

\section{Graph \texorpdfstring{$\mathrm{C}^*$}{C*}-algebras} \label{section:polygons}

\subsection{Graph algebras}

The `graph' in `graph $\cs$-algebra' might in other contexts be called a directed (multi)graph. For clarity, we begin by specifying the kinds of graphs that are amenable to our analysis and then we explain how $\cs$-algebras are attached to them.

A \emph{directed graph} $E$ consists of a vertex set $E^0$, an edge set $E^1$, and range and source maps $r,s\colon E^1\to E^0$. A \emph{path} in $E$ is a (possibly finite) sequence of edges $(\alpha_i)_{i\ge1}$ such that $r(\alpha_i)=s(\alpha_{i+1})$ for every $i$, and a \emph{cycle} is a finite path $(\alpha_i)_{i=1}^n$ whose initial and final vertices coincide and with no other repeated vertices. A vertex $v\in E^0$ is called a \emph{sink} if $s^{-1}(v)=\emptyset$. Every graph considered in this article will be finite and (except for some of the graphs mentioned in Remark~\ref{remark:logn} and Table~\ref{table:dlog}) will with high probability have no sinks.

We recall the definition of the graph algebra associated to $E$. (Note that a different convention is used in \cite{Raeburn:2005aa}. What follows is the definition that appears, for example, in \cite{Bates:2000fk}.) A \emph{Cuntz--Krieger $E$-family} associated to a directed graph $E=(E^0,E^1,s,r)$ is a set
\begin{equation} \label{eqn:gen}
\{p_v \mid v\in E^0\} \cup \{s_e \mid e\in E^1\},
\end{equation}
where the $p_v$ are mutually orthogonal projections and the $s_e$ are partial isometries satisfying:
\begin{align} \label{eqn:rel}
s_e^*s_f &= 0 &&\forall\: e,f\in E^1 \text{ with } e\ne f \nonumber\\
s_e^*s_e &= p_{r(e)}
&&\forall\: e\in E^1 \nonumber\\
s_es_e^* &\le p_{s(e)} &&\forall\: e\in E^1 \nonumber\\
p_v &= \sum_{e\in s^{-1}(v)} s_es_e^* &&\forall\: v\in E^0 \:\text{that is not a sink}.
\end{align}

The \emph{graph algebra} $\cs(E)$ is the universal $\cs$-algebra with generators \eqref{eqn:gen} satisfying the relations \eqref{eqn:rel}. Every graph algebra $\cs(E)$ is separable, nuclear and satisfies the universal coefficient theorem (UCT) (see, for example, \cite[Remark 4.3]{Raeburn:2005aa} and \cite[Lemma 3.1]{Raeburn:2004tg}), and $\cs(E)$ is unital if and only if $E^0$ is finite, in which case $1_{\cs(E)}=\sum_{v\in  E^0}p_v$.

\subsection{Cuntz--Krieger algebras}

By a \emph{Cuntz--Krieger algebra} we mean the graph $\cs$-algebra of a finite graph without sinks. This is not the original definition that appears in \cite{Cuntz:1980hl}, but is equivalent to it (even in the absence of Cuntz and Krieger's condition (I); see \cite[Definition 2.3]{Arklint:2015tu} and the ensuing discussion, and also \cite[Theorem 3.12]{Arklint:2015tu}).

If $E$ is such a graph, and $A_E$ is its adjacency matrix
\[
A_E(v,w) = |\{e\in E^1 \mid s(e)=v, r(e)=w\}|,
\]
then the $K$-theory of $\cs(E)$ can be computed as follows. From the proof of \cite[Theorem 3.2]{Raeburn:2004tg}, the map $\zz^{E^0}\to K_0(\cs(E))$ that sends the standard basis vector $e_v$ to $[p_v]$ is surjective with kernel isomorphic to $(A_E^t-I)\zz^{E^0}$. In particular,
\begin{equation} \label{eqn:kt}
(K_0(\cs(E)),[1_{\cs(E)}]_0,K_1(\cs(E))) \cong (\coker(A_E^t-I),[(1,\dots,1)], \ker(A_E^t-I)).
\end{equation}

A \emph{UCT Kirchberg algebra} is by definition a purely infinite, simple, nuclear, separable $\cs$-algebra that satisfies the universal coefficient theorem. By the Kirchberg--Phillips theorem (see \cite[Theorem 4.2.1]{Phillips:2000fj}, and also \cite{Kirchberg:2000kq} and \cite[Chapter 8]{Rordam:2002yu}), these algebras are classified up to stable isomorphism by $K$-theory. Further, specifying the class $[1]_0$ of the unit determines the isomorphism class of the algebra (see \cite[Theorem 4.2.4]{Phillips:2000fj}).

The following is a consequence of \cite[Propositions 5.1 and 5.3]{Bates:2000fk}.

\begin{proposition} \label{prop:ckk}
If $E$ is strongly connected and does not consist of a single cycle (so in particular, every cycle has an exit), then the Cuntz--Krieger algebra $\cs(E)$ is purely infinite and simple, so is a UCT Kirchberg algebra.
\end{proposition}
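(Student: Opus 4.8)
The plan is to read the statement as a purely combinatorial reduction. The cited results of \cite{Bates:2000fk} characterise simplicity and pure infiniteness of $\cs(E)$ through graph-theoretic conditions on $E$: \emph{cofinality} together with \emph{Condition~(L)} (every cycle has an exit) for simplicity, and the presence of a cycle (equivalently, every vertex connecting to a cycle) for pure infiniteness. Since any graph algebra $\cs(E)$ is already separable, nuclear and UCT (as recorded earlier in this section), the only real work is to verify these conditions from the hypothesis that $E$ is strongly connected and is not a single cycle; the Kirchberg--Phillips conclusion then follows by assembling the pieces, so I would structure the argument around these verifications.

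First I would dispatch cofinality, which is immediate. In a strongly connected graph every vertex connects to every other, and in particular to some vertex visited by any prescribed infinite path; this is exactly the cofinality hypothesis required by \cite[Proposition 5.1]{Bates:2000fk}. Next I would verify Condition~(L), which is where the hypothesis ``not a single cycle'' is genuinely used and which also supplies the parenthetical claim of the proposition. Arguing by contradiction, suppose some cycle $c$ with vertex set $V_c$ has no exit. Having no exit means every vertex of $V_c$ emits exactly one edge, namely the corresponding edge of $c$; consequently no edge leaves $V_c$, and every path issuing from a vertex of $V_c$ remains inside $V_c$. Strong connectivity forces every vertex of $E$ to be reachable from a vertex of $V_c$, so $E^0 = V_c$, and since the only edges out of these vertices are the edges of $c$ we conclude that $E^1$ consists precisely of those edges. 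Thus $E$ would be exactly the single cycle $c$, contrary to hypothesis. Hence every cycle has an exit, Condition~(L) holds, and \cite[Proposition 5.1]{Bates:2000fk} yields simplicity of $\cs(E)$.

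Finally I would establish pure infiniteness. Because $E$ is a finite graph without sinks (part of the standing definition of a Cuntz--Krieger algebra) and is strongly connected, every vertex lies on a cycle: for each $v$ a directed path from $v$ back to $v$ exists, and it contains a cycle through $v$. A simple graph algebra whose graph contains a cycle is purely infinite by \cite[Proposition 5.3]{Bates:2000fk}, so $\cs(E)$ is purely infinite and simple; being also separable, nuclear and UCT, it is a UCT Kirchberg algebra. The only point requiring care --- rather than any genuine obstacle --- is matching the hypotheses to the exact formulations of cofinality, Condition~(L) and the pure-infiniteness criterion in \cite{Bates:2000fk}, together with confirming the no-sinks assumption underlying the Cuntz--Krieger framework; once this dictionary is fixed, the verification above is routine.
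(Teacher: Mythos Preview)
Your proposal is correct and follows exactly the approach indicated by the paper, which simply records the proposition as a consequence of \cite[Propositions 5.1 and 5.3]{Bates:2000fk} without writing out a proof. You have supplied the routine combinatorial verifications (cofinality, Condition~(L), and the existence of cycles) that the paper leaves implicit.
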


\begin{remark}
\begin{enumerate}[1.]
\item We do not actually need the full strength of the Kirchberg--Phillips theorem in this article, and could instead appeal to R{\o}rdam's earlier classification \cite[Theorem 6.5]{Rordam:1995aa} of simple Cuntz--Krieger algebras.
\item While the graph algebras of primary interest to us will be covered by Proposition~\ref{prop:ckk}, it should be noted that the full class of unital graph algebras is classifiable by an invariant called ordered reduced filtered $K$-theory that accounts for the ideal structure of the algebras (see \cite{Eilers:2021aa}).
\end{enumerate}
\end{remark}

\subsection{Cuntz polygons} \label{subsection:polygons}

\begin{definition} \label{def:polygon}
For $n\in\nn$ and $\bar m = (m_1,\dots,m_n)\in\nn^n$, let $E_{\bar m}$ be the graph with vertices
\[
E_{\bar m}^0=\{v_1,\dots,v_n\}
\]
and edges
\[
E_{\bar m}^1 = \{e_{i,j} \mid 1\le i \le n, 1\le j \le m_i\}\cup\{l_1,\dots,l_n\}
\]
where, for $1\le i\le n$ and $1\le j\le m_i$,
\[
r(l_i)=s(l_i)=v_i,\: r(e_{i,j})=v_{i},\: s(e_{i,j})=v_{i-1}\: (\modulo n).
\]
We call the graph algebra $\ppm:=\cs(E_{\bar m})$ a \emph{Cuntz $n$-gon}. A \emph{Cuntz polygon} is a Cuntz $n$-gon for some $n$.
\end{definition} 

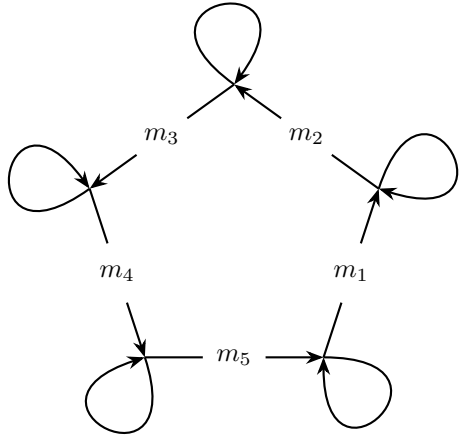
\begin{figure}[!htbp]
\centering
\begin{tikzpicture} 
\begin{scope}[every node/.style={coordinate,draw}]
    \node (A) at (canvas polar cs: angle=18, radius=2cm) {$v_1$};
    \node (B) at (canvas polar cs: angle=90, radius=2cm) {$v_2$};
    \node (C) at (canvas polar cs: angle=162, radius=2cm) {$v_3$};
    \node (D) at (canvas polar cs: angle=234, radius=2cm) {$v_4$};
    \node (E) at (canvas polar cs: angle=306, radius=2cm) {$v_5$};
\end{scope}

\begin{scope}[>={Stealth[black]},
              every node/.style={fill=white,circle},
              every edge/.style={draw=black, thick}]
    \path [->] (A) edge node {$m_2$} (B);
    \path [->] (B) edge node {$m_3$} (C);
    \path [->] (C) edge node {$m_4$} (D);
    \path [->] (D) edge node {$m_5$} (E);
    \path [->] (E) edge node {$m_1$} (A);
    \path [->, every loop/.style={min distance=2cm, in=-18,out=72}] (A) edge[loop] (A);
    \path [->, every loop/.style={min distance=2cm, in=54,out=144}] (B) edge[loop left] (B); 
    \path [->, every loop/.style={min distance=2cm, in=126,out=216}] (C) edge[loop right] (C); 
    \path [->, every loop/.style={min distance=2cm, in=198,out=288}] (D) edge[loop below] (D); 
    \path [->, every loop/.style={min distance=2cm, in=270,out=360}] (E) edge[loop right] (E); 
\end{scope}
\end{tikzpicture}
\caption{The graph $E_{(m_1,m_2,m_3,m_4,m_5)}$ of the Cuntz pentagon $\mathcal{P}_{(m_1,m_2,m_3,m_4,m_5)}$.} \label{fig:turtle}
\end{figure}

The reason for the terminology, apart from the shapes of their associated graphs (see Figure~\ref{fig:turtle}), is that Cuntz polygons are generalisations of Cuntz algebras. Note indeed that a Cuntz $1$-gon $\mathcal{P}_{(m)}$ is isomorphic to $\oo_{m+1}$. Moreover, we have the following.

\begin{proposition} ~\label{prop:kpoly}
\begin{enumerate}[1.]
\item \label{poly1} If $A$ is a Cuntz polygon $\cs(E_{\bar m})$, then $A$ is a UCT Kirchberg algebra with
\begin{equation} \label{eqn:cpk}
(K_0(A),[1_A]_0,K_1(A)) \cong \left(\bigoplus_{i=1}^n\zz/m_i\zz,(1,\dots,1),0\right).
\end{equation}
\item \label{poly2} A Cuntz polygon $\ppm$ is stably isomorphic to a Cuntz algebra if and only if $K_0(\ppm)$ is cyclic, which happens if and only if the components $m_1,\dots,m_n$ of $\bar m$ are pairwise coprime.
\item \label{poly3} If $E$ is a strongly connected finite graph whose adjacency matrix $A_E$ is not a permutation matrix and is such that $A_E^t-I$ is nonsingular, then $\cs(E)$ is stably isomorphic to $\ppm$ for some $\bar m$.
\end{enumerate}
\end{proposition}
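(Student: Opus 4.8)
The plan is to prove the three assertions in turn, doing the real computational work in part~\ref{poly1} and reducing parts~\ref{poly2} and~\ref{poly3} to the cited classification theorems together with elementary group theory.

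For part~\ref{poly1}, I would first check the hypotheses of Proposition~\ref{prop:ckk}: the edges $e_{i,j}$ make $v_1\to v_2\to\cdots\to v_n\to v_1$ a directed cycle visiting every vertex, so $E_{\bar m}$ is strongly connected, while the loops $l_i$ prevent $E_{\bar m}$ from being a single cycle. Hence $\ppm$ is a UCT Kirchberg algebra. The substance is then the $K$-theory. Reading off the adjacency data gives $A_{E_{\bar m}}=I+N$, where the identity comes from the loops and $N$ has the entry $m_i$ in position $(i-1,i)$ (indices mod $n$), recording the $m_i$ parallel edges into $v_i$. Thus the matrix governing the $K$-theory via \eqref{eqn:kt} is $M:=A_{E_{\bar m}}^t-I=N^t$, a weighted cyclic matrix with $Me_{i-1}=m_i e_i$. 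Its image is $\bigoplus_i m_i\zz e_i$, so $\coker M\cong\bigoplus_{i=1}^n\zz/m_i\zz$ with $e_i$ mapping to a generator of the $i$-th summand; since $\det M=\pm\prod_i m_i\ne 0$, also $\ker M=0$. Under this identification the class $[(1,\dots,1)]$ of the unit from \eqref{eqn:kt} becomes $(1,\dots,1)$, yielding \eqref{eqn:cpk}.

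For part~\ref{poly2}, both Cuntz algebras and Cuntz polygons are UCT Kirchberg algebras, so by the Kirchberg--Phillips theorem (or R{\o}rdam's classification) stable isomorphism is detected by the pair $(K_0,K_1)$. Since $K_1(\ppm)=0=K_1(\oo_k)$ always, and $K_0(\oo_k)=\zz/(k-1)\zz$ is cyclic (with $K_0(\oo_\infty)=\zz$ excluded because $K_0(\ppm)$ is finite), $\ppm$ is stably isomorphic to a Cuntz algebra precisely when $K_0(\ppm)=\bigoplus_i\zz/m_i\zz$ is finite cyclic. It then remains to invoke the elementary fact that $\bigoplus_{i=1}^n\zz/m_i\zz$ is cyclic if and only if the $m_i$ are pairwise coprime (equivalently $\operatorname{lcm}(m_1,\dots,m_n)=\prod_i m_i$), which follows from the Chinese remainder theorem. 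For part~\ref{poly3}, the hypotheses say $A_E$ is irreducible and not a permutation matrix, which is exactly the statement that $E$ is strongly connected and not a single cycle, so Proposition~\ref{prop:ckk} again makes $\cs(E)$ a UCT Kirchberg algebra. Nonsingularity of $A_E^t-I$ gives $K_1(\cs(E))=\ker(A_E^t-I)=0$ and forces $K_0(\cs(E))=\coker(A_E^t-I)$ to be a finite abelian group $G$. By the structure theorem I may write $G\cong\bigoplus_{i=1}^n\zz/m_i\zz$ for some $\bar m\in\nn^n$; by part~\ref{poly1} the corresponding $\ppm$ satisfies $K_0(\ppm)\cong G\cong K_0(\cs(E))$ and $K_1(\ppm)=0=K_1(\cs(E))$. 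Since stable isomorphism of UCT Kirchberg algebras ignores the class of the unit, the classification theorem produces a stable isomorphism $\cs(E)\cong\ppm$.

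The only step requiring genuine care is the cokernel computation in part~\ref{poly1}: one must fix the cyclic indexing conventions consistently so that $M=A_{E_{\bar m}}^t-I$ emerges with exactly one nonzero entry $m_i$ per row and column, and then transport the class of the unit correctly through the identification $\coker M\cong\bigoplus_i\zz/m_i\zz$. Everything else is an application of the already-cited classification results together with standard facts about finite abelian groups.
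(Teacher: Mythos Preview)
Your proposal is correct and follows essentially the same approach as the paper: verify the hypotheses of Proposition~\ref{prop:ckk}, compute $A_{E_{\bar m}}^t-I$ directly (the paper writes it as $DP^t$ with $D=\diag(m_1,\dots,m_n)$ and $P$ a cyclic permutation, which is exactly your $N^t$), read off the cokernel and kernel, and then invoke the Kirchberg--Phillips/R{\o}rdam classification together with the structure theorem for finite abelian groups for parts~\ref{poly2} and~\ref{poly3}. The only cosmetic difference is that the paper packages the adjacency matrix as $I+PD$ rather than $I+N$, but the computation and the logical structure are identical.
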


\begin{proof}
\begin{enumerate}[1.]
\item Since $E_{\bar m}$ is strongly connected and does not consist of a single cycle, $A$ is a UCT Kirchberg algebra by Proposition~\ref{prop:ckk}. The adjacency matrix of $E_{\bar m}$ is $A_{E_{\bar m}}=I+PD$, where $D$ is the diagonal matrix $\diag(m_1,\dots,m_n)$ and $P$ is the permutation matrix whose action on the standard basis of $\zz^{E^0}$ is given by $Pe_{v_i}=e_{v_{i-1}}$ (mod $n$). So, $A^t_{E_{\bar m}}-I=DP^t$, which implies that $\ker(A^t_{E_{\bar m}}-I)=0$ and $\coker(A^t_{E_{\bar m}}-I)=\coker D \cong \bigoplus_{i=1}^n\zz/m_i\zz$. Therefore, \eqref{eqn:cpk} follows from \eqref{eqn:kt}.

\item The second assertion follows from the Kirchberg--Phillips (or R{\o}rdam) classification theorem together with the fact that the Cuntz algebra $\oo_n$ is a UCT Kirchberg algebra with $K$-theory
\[
(K_0(\oo_n),[1_{\oo_n}]_0,K_1(\oo_n)] \cong (\zz/(n-1)\zz,1,0)
\]
(see \cite{Cuntz:1977qy} and \cite{Cuntz:1981kq}). Note also that $\bigoplus_{i=1}^n\zz/m_i\zz$ is cyclic if and only if the $m_i$ are pairwise coprime, and in this case, $\ppm$ is in fact isomorphic to $\oo_{1+\prod_{i=1}^nm_i}$.

\item Finally, if $A_E$ is not a permutation matrix, then $E$ does not consist of a single cycle. By Proposition~\ref{prop:ckk}, $\cs(E)$ is therefore a UCT Kirchberg algebra. By \eqref{eqn:kt} and \eqref{eqn:cpk}, there exists some Cuntz polygon that has the same $K$-theory as $\cs(E)$, which is then in the same stable isomorphism class by the Kirchberg--Phillips (or R{\o}rdam) classification theorem. \qedhere
\end{enumerate}
\end{proof}

\subsection{Exact vs stable isomorphism} \label{subsection:unit}

While Cuntz polygons cover all \emph{stable} isomorphism classes of purely infinite simple Cuntz--Krieger algebras with trivial $K_1$, they are far from being able to capture all \emph{isomorphism} classes. The reason of course is the position of the unit in $K_0$, information which is encoded by the potentially complicated structure of the graph. In fact, the class of the unit can be arbitrary: by \cite[Proposition 3.9]{Eilers:2016wz}, for any finitely generated abelian group $G$, distinguished element $g\in G$ and free abelian group $F$ with $\rank F = \rank G$ (for example, $F=0$ and $G$ any finite abelian group), there exists a finite graph $E$ such that $\cs(E)$ is a Kirchberg algebra with
\[
\left(K_0(\cs(E)), [1_{\cs(E)}]_0, K_1(\cs(E))\right) \cong (G,g,F).
\]

But actually, we can keep track of the class of the unit via the matrix $U$ that appears in the SNF decomposition $UM_EV=D$ of $M_E=A_E^t-I$. More precisely, and elaborating on \eqref{eqn:snf}, the map $\zz^n\to\zz^n/D\zz^n=\coker(D)$, $x\mapsto [Ux]$ is surjective with kernel $M_E\zz^n$, so gives an isomorphism between $\coker(M_E)$ and $\coker(D)\cong \zz/d_1\zz \oplus\dots \oplus \zz/d_n\zz$. In light of \eqref{eqn:kt}, the class of the unit under this isomorphism is $\left[U\left(\begin{smallmatrix}1\\ \vdots\\ \\1\end{smallmatrix}\right)\right]$.

Suppose now that $E$ satisfies the hypotheses of Proposition~\ref{prop:kpoly}.\ref{poly3}, so that $\cs(E)$ is stably isomorphic to the corresponding Cuntz polygon $\mathcal{P}_{\bar d}:=\mathcal{P}_{(d_1,\dots,d_n)}$. By Proposition~\ref{prop:kpoly}.\ref{poly1}, the class of the unit for $\mathcal{P}_{\bar d}$ is  $\left[\left(\begin{smallmatrix}1\\ \vdots\\ \\1\end{smallmatrix}\right)\right]$. Comparing the automorphism orbits of these two vectors (by incorporating the description presented in \cite{Dutta:2011aa,Schwachhofer:1999aa} into our computer code), we can therefore decide whether or not there exists an isomorphism between $(K_0(\cs(E)),[1_{\cs(E)}])$ and $(K_0(\mathcal{P}_{\bar d}),[1_{\mathcal{P}_{\bar d}}])$, giving an isomorphism between $\cs(E)$ and $\mathcal{P}_{\bar d}$. In Section~\ref{subsection:exact}, we present a heuristic analysis of the asymptotic probability of this event for Bernoulli digraphs $\dd_{n,q}$ and their symmetric cousins $\ee_{n,q}$. Section~\ref{section:stats} contains experimental confirmation of the formulas displayed in Conjecture~\ref{conjecture:bernoulli}.

\subsection{The Fra\"{i}ss\'{e} theory of Cuntz polygons} \label{subsection:fraisse}

The category whose objects are finite abelian groups and whose morphisms are injective group homomorphisms forms a \emph{Fra\"{i}ss\'{e} class} (see, for example, the discussion and references in \cite[\S2]{Kostana:2023aa}). The content of this assertion, given the more immediately obvious properties of separability (that is, countably many isomorphism classes of objects), heredity (closure under substructures) and joint embedding (any two objects can both be embedded into a third), is the ability to amalgamate, that is, to complete commutative diagrams of the form
\[
\begin{tikzcd}[column sep=small, row sep=small]
& H_1 \arrow[hook,dashed,dr] & \\
G_1 \arrow[hook,ur] \arrow[hookrightarrow,dr] & & G_2 \\
& H_2 \arrow[hook,dashed,ur] &
\end{tikzcd}
\]
Attached to any Fra\"{i}ss\'{e} class $\mathcal{L}$ is its \emph{Fra\"{i}ss\'{e} limit} $\mathbb{F}$, the unique structure whose \emph{age} (the collection of its finite substructures) is $\mathcal{L}$, and which is \emph{homogeneous}, meaning that any two embeddings of an object of $\mathcal{L}$ into $\mathbb{F}$ must lie in the same automorphism orbit (where $\Aut(\mathbb{F})$ acts on embeddings by post-composition). For the class of finite abelian groups, this limit is the group $\mathbb{A}=\bigoplus_{i=1}^\infty\qq/\zz$ (see \cite[Proposition 6]{Kostana:2023aa}).

\begin{definition} \label{def:fraisse}
Let $\mathbb{G}$ be the stable UCT Kirchberg algebra with $(K_0(\mathbb{G}),K_1(\mathbb{G}))=(\mathbb{A},0)$.
\end{definition}

Now let us consider the category $\mathcal{P}$ whose objects are stabilised Cuntz polygons (that is, $\cs$-algebras of the form $\ppm\otimes\mathbb{K}$ for some $\bar m$, where $\mathbb{K}$ is the $\cs$-algebra of compact operators on a separable, infinite-dimensional Hilbert space) and whose morphisms are asymptotic unitary equivalence classes of \emph{$K$-embeddings}, by which we mean nonzero $^*$-homomorphisms that are embeddings at the level of $K$-theory. Note that, since the objects in $\mathcal{P}$ are simple, $K$-embeddings are embeddings. By Kirchberg's classification theorem (see \cite[Theorem 8.3.3]{Rordam:2002yu}), this is also a Fra\"{i}ss\'{e} class in the suitable $\cs$-algebraic sense (see \cite{Eagle:2016ww,Masumoto:2017wx,Jacelon:2021uc}). The $\cs$-algebra $\mathbb{G}$ defined above is the $\cs$-algebraic Fra\"{i}ss\'{e} limit of this class. $\mathbb{G}$ is:
\begin{itemize}
\item (by \cite{Szymanski:2002yq}) the graph $\cs$-algebra of an infinite (but row-finite) strongly connected graph;
\item \emph{universal}, by which we mean that it can be built as an inductive limit of objects in $\mathcal{P}$ in such a way that every object in $\mathcal{P}$ can be embedded into some finite stage of the inductive sequence (a fact which follows from Kirchberg's classification theorem since the corresponding statement is true of $\mathbb{A}$);
\item \emph{homogeneous}, meaning that for every object $B$ in $\mathcal{P}$, $\Aut(\mathbb{G})$ acts transitively on the space of embeddings of $B$ into $\mathbb{G}$.
\end{itemize}

In summary, we have the following.

\begin{theorem} \label{thm:homogeneous}
Let $B$ be a UCT Kirchberg algebra with $K_1(B)=0$ and $K_0(B)$ finite (in other words, $B$ is stably isomorphic to a Cuntz polygon). Then, there is an embedding $\varphi\colon B\to\mathbb{G}$ such that $K_0(\varphi)$ is injective. Moreover, if $\psi\colon B\to \mathbb{G}$ is another such embedding then there exists an automorphism $\alpha$ of $\mathbb{G}$ such that $\psi$ is asymptotically unitarily equivalent to $\alpha\circ\varphi$.
\end{theorem}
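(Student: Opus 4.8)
The plan is to reduce the statement, via the universal coefficient theorem and Kirchberg's classification machinery, to two purely group-theoretic facts about the Fra\"{i}ss\'{e} limit $\mathbb{A}=\bigoplus_{i<\omega}\qq/\zz$: that every finite abelian group embeds into $\mathbb{A}$, and that any two such embeddings lie in a single $\Aut(\mathbb{A})$-orbit. Write $G=K_0(B)$, a finite abelian group, and recall that $K_1(B)=0$ and $(K_0(\mathbb{G}),K_1(\mathbb{G}))=(\mathbb{A},0)$. The first thing I would record is that, because $K_1$ vanishes for both $B$ and $\mathbb{G}$, the $\mathrm{Ext}$-term in the UCT sequence is zero, so that evaluation on $K_0$ yields natural isomorphisms $KK(B,\mathbb{G})\cong\Hom(G,\mathbb{A})$ and $KK(\mathbb{G},\mathbb{G})\cong\End(\mathbb{A})$, the invertible elements of the latter corresponding to $\Aut(\mathbb{A})$. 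Thus every $K$-theoretic datum appearing below is faithfully encoded by a homomorphism on $K_0$, and I may pass freely between $\ast$-homomorphisms (up to approximate unitary equivalence) and their induced maps on $K_0$.

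For existence, since $\mathbb{A}$ is the Fra\"{i}ss\'{e} limit of the class of finite abelian groups with injective homomorphisms \cite{Kostana:2023aa}, there is an injective homomorphism $f\colon G\hookrightarrow\mathbb{A}$ (concretely, split $G$ into cyclic summands and send the summand $\zz/m_i\zz$ isomorphically onto the $m_i$-torsion of a distinct copy of $\qq/\zz$). By the reduction above, $f$ lifts to a class in $KK(B,\mathbb{G})$, and Kirchberg's existence theorem \cite[Theorem 8.3.3]{Rordam:2002yu} realises this class by a $\ast$-homomorphism $\varphi\colon B\to\mathbb{G}$ with $K_0(\varphi)=f$. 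As $B$ is simple and $f$ is injective, $\varphi$ is nonzero and hence an embedding. (In the degenerate case $G=0$ one has $KK(B,\mathbb{G})=0$; here one instead takes any nonzero---hence injective---$\ast$-homomorphism $\varphi\colon B\to\mathbb{G}$, which exists since $\mathbb{G}$ contains a unital copy of $\oo_2$ in a corner, and all of which are automatically approximately unitarily equivalent by the uniqueness theorem.)

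For the uniqueness clause, let $\psi\colon B\to\mathbb{G}$ be a second embedding with $g:=K_0(\psi)$ injective. Homogeneity of the Fra\"{i}ss\'{e} limit $\mathbb{A}$ \cite{Kostana:2023aa} provides $\theta\in\Aut(\mathbb{A})$ with $\theta\circ f=g$; lifting $\theta$ to an invertible class in $KK(\mathbb{G},\mathbb{G})$ and applying the existence theorem once more (which realises a $KK$-equivalence by an isomorphism) produces an automorphism $\alpha\in\Aut(\mathbb{G})$ with $K_0(\alpha)=\theta$. Then $K_0(\alpha\circ\varphi)=\theta\circ f=g=K_0(\psi)$, so $\alpha\circ\varphi$ and $\psi$ induce the same class in $KK(B,\mathbb{G})\cong\Hom(G,\mathbb{A})$, whence Kirchberg's uniqueness theorem \cite[Theorem 8.3.3]{Rordam:2002yu} gives that they are approximately unitarily equivalent. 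The genuinely substantive input is the homogeneity of $\mathbb{A}$---that any two embeddings of a fixed finite abelian group into $\mathbb{A}$ differ by an automorphism of $\mathbb{A}$---which is exactly the Fra\"{i}ss\'{e} property invoked from \cite{Kostana:2023aa}; everything else is a routine transfer through the UCT and Kirchberg--Phillips. The only care points are the $\mathrm{Ext}$-vanishing that makes the $KK$-groups so transparent, and the non-unital bookkeeping involved in embedding a possibly unital $B$ into the stable algebra $\mathbb{G}$.
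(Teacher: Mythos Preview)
Your proof is correct and follows essentially the same route as the paper: the theorem is presented there as a summary of the preceding discussion, which establishes that $\mathbb{G}$ is the Fra\"{i}ss\'{e} limit of the category $\mathcal{P}$ by combining the Fra\"{i}ss\'{e} property of $\mathbb{A}$ with Kirchberg's classification theorem \cite[Theorem~8.3.3]{Rordam:2002yu}. You have simply unpacked that sketch---making the UCT computation of $KK(B,\mathbb{G})$ and $KK(\mathbb{G},\mathbb{G})$ explicit, and separating out the degenerate $G=0$ case---but the substance (universality from embeddability into $\mathbb{A}$, homogeneity from $\Aut(\mathbb{A})$-transitivity, and the transfer to $\cs$-algebras via Kirchberg--Phillips) is the same.
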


\begin{remark} \label{remark:amenable}
In contrast to the Fra\"{i}ss\'{e} limits described in \cite{Eagle:2016ww,Masumoto:2017wx,Jacelon:2021uc}, the $\cs$-algebra $\mathbb{G}$ is purely infinite rather than stably finite. In the stably finite setting, it is an open problem to decide whether the automorphism group of the $\cs$-algebraic limit structure is \emph{extremely amenable} (under the topology of pointwise convergence), meaning that every continuous action of the group on a nonempty compact space should admit a fixed point. This property is indeed a feature of $\Aut(A)$ when $A$ is a uniformly hyperfinite (UHF) $\cs$-algebra (see \cite[\S5]{Eagle:2016ww}), but it is unclear whether it holds when $A$ is the Jiang--Su algebra $\mathcal{Z}$ or any of the stably projectionless algebras considered in \cite{Jacelon:2021uc}. For $A=\mathbb{G}$, the following argument shows that $\Aut(\mathbb{G})$ is \emph{not} extremely amenable. By \cite{Deuber:1978aa} (a reference for which we thank Vadim Alekseev), the class of finite abelian groups does not have the Ramsey property, so by the KPT correspondence \cite{Kechris:2005ly}, $\Aut(K_0(\mathbb{G}))\cong\Aut(\mathbb{A})$ is not extremely amenable when equipped with the pointwise-convergence topology. By \cite[Corollary 8.4.10]{Rordam:2002yu}, there is an isomorphism $\Aut(\mathbb{G})/\Aut_0(\mathbb{G}) \to \Aut(K_0(\mathbb{G}))$ (where $\Aut_0(\mathbb{G})$ denotes the closed subgroup of asymptotically inner automorphisms of $\mathbb{G}$) which can moreover be checked to be continuous and open (so is an isomorphism of \emph{topological} groups). Since extreme amenability passes to quotients, we conclude that $\Aut(\mathbb{G})$ is not extremely amenable.
\end{remark}

\begin{remark} \label{remark:orbit}
Let $\mathcal{P}_\mathbb{G}$ denote the set of asymptotic unitary equivalence classes of the $\cs$-subalgebras of $\mathbb{G}$ that are stably isomorphic to $\ppm$ for some $\bar m$. More precisely, we consider subalgebras $C,D\subseteq \mathbb{G}$ to be equivalent if there exists an isomorphism $\alpha\colon C \to D$ (which, by homogeneity, can be extended to $\tilde{\alpha}\in\Aut(\mathbb{G})$) such that $\tilde{\alpha}|_C$ is asymptotically unitarily equivalent to the inclusion $C\hookrightarrow\mathbb{G}$ . Note that every $\cs$-algebra representing an equivalence class in $\mathcal{P}_\mathbb{G}$ is a UCT Kirchberg algebra, so is either stable or unital. Let us correspondingly separate $\mathcal{P}_\mathbb{G} = \mathcal{P}^0_\mathbb{G} \sqcup \mathcal{P}^1_\mathbb{G}$ into the classes of stable algebras and the classes of unital ones. Fix a set $\mathcal{C}_\mathbb{G} \subseteq \mathcal{P}^1_\mathbb{G}$ of representatives of isomorphism classes, one for each Cuntz polygon, so that their stabilisations $\mathcal{C}_\mathbb{G} \otimes \mathbb{K} \subseteq \mathbb{G} \otimes \mathbb{K} \cong \mathbb{G}$ are representatives of isomorphism classes in $\mathcal{P}^0_\mathbb{G}$. By homogeneity, that is, because partial automorphisms can be extended to full automorphisms, we can completely describe the $\Aut(\mathbb{G})$-orbit of $\mathcal{C}_\mathbb{G}\otimes \mathbb{K}$:  the orbit of $[C]\in\mathcal{C}_\mathbb{G}\otimes \mathbb{K}$ is $\{[D] \in \mathcal{P}^0_\mathbb{G} \mid K_0(D) \cong K_0(C)\}$, and the orbit of the whole set is all of $\mathcal{P}^0_\mathbb{G}$. Now we ask about the unital component of $\mathcal{P}_\mathbb{G}$: what is the $\Aut(\mathbb{G})$-orbit $\Aut(\mathbb{G})_{\mathcal{P}_\mathbb{G}}(\mathcal{C}_\mathbb{G})$ of $\mathcal{C}_\mathbb{G}$ in $\mathcal{P}^1_\mathbb{G} \subseteq \mathcal{P}_\mathbb{G}$? Again by homogeneity, this is really the same question that was addressed in Section~\ref{subsection:unit}, and the description of $\Aut(\mathbb{G})_{\mathcal{P}_\mathbb{G}}(\mathcal{C}_\mathbb{G})$ supported by the present article is probabilistic. For an idea of the probability that a suitably large randomly chosen element of $\mathcal{P}^1_\mathbb{G}$ lies in $\Aut(\mathbb{G})_{\mathcal{P}_\mathbb{G}}(\mathcal{C}_\mathbb{G})$, see Conjecture~\ref{conjecture:bernoulli}.
\end{remark}

\section{Random graph models} \label{section:models}

We will consider three methods of constructing directed multigraphs on $n$ labelled vertices by randomly adding edges. Equivalently, we create adjacency matrices $A(n)$ whose entries are random variables taking values in the nonnegative integers.

\begin{enumerate}[1.]
\item First, we insist that these entries be independent, leading in particular to \emph{Bernoulli digraphs} $\dd_{n,q}$ in which each possible edge (allowing loops but not multiple edges) occurs independently with fixed probability $q$. That is, each entry of $A(n)$ is either $1$ (with probability $q$) or $0$ (with probability $1-q$).
\item By contrast, there are two sources of dependence among the entries of our second model of random adjacency matrix, which is now required to be symmetric with each row summing to a fixed integer $r\ge3$. That is, we build \emph{random $r$-regular undirected multigraphs} and convert them into directed ones by replacing each edge (between given vertices $i,j$) by two (one from $i$ to $j$ and one from $j$ to $i$). Specifically, we use the \emph{perfect matchings model}
\[
\g_{n,r}=\underbrace{\g_{n,1}+\dots+\g_{n,1}}_r,
\]
that is, the union of $r$ independent, uniformly random perfect matchings on the $n\in2\nn$ vertices.
\item The third model $\ee_{n,q}$ is in a sense intermediate between the first two. These \emph{Erd\H{o}s--R\'{e}nyi graphs} are constructed in the same manner as $\dd_{n,q}$, but symmetrically. They exhibit a pattern of asymptotic connectivity similar to $\dd_{n,q}$, and asymptotic $K$-theoretic  behaviour similar to $\g_{n,r}$.
\end{enumerate}

In each case, we make statements about the asymptotic probabilities of:
\begin{itemize}
\item strong connectedness of the graph (entailing simplicity and pure infiniteness of the graph algebra);
\item nonsingularity of $M(n)=A(n)^t-I$ (leading to $K_1=0$ for the graph algebra);
\item occurrences of specific Sylow $p$-subgroups of the $K_0$-group of the graph algebra, for fixed primes $p$;
\item cyclicity of (Sylow subgroups of) the $K_0$-group (allowing us to compute or estimate the probability that the graph algebra is stably isomorphic to a Cuntz algebra).
\end{itemize}

We also discuss situations in which we can compute or estimate the asymptotic probabilities of the associated edge shift being flow equivalent to a full shift (Section~\ref{subsection:flow}) or the graph $\cs$-algebra being \emph{exactly} isomorphic to a Cuntz polygon or Cuntz algebra (Section~\ref{subsection:exact}), and investigate the scope for analysing related distributions (Section~\ref{subsection:contiguity}).

\begin{notation} \label{notation}
If $G$ is a finite group and $p$ a prime, then $G_p$ denotes the Sylow $p$-subgroup of $G$. If $P$ is a finite set of primes, then a \emph{$P$-group} is a group whose order is a product of powers of primes in $P$, and we write $G_P$ for $\bigoplus_{p\in P}G_p$. We use the symbol $\pp$ for probabilities. A sequence of events $\mathscr{E}_n$, $n\in\nn$, is said to occur \emph{asymptotically almost surely}, or \emph{with high probability}, if $\lim_{n\to\infty}\pp(\mathscr{E}_n)=1$. 
\end{notation}
 
\subsection{Bernoulli digraphs} \label{subsection:bernoulli}

The following in particular applies to Bernoulli digraphs $\dd_{n,q}$. Note that, since the entries of the adjacency matrices of these graphs are all either $0$ or $1$, if $q\in(0,1)$ then the condition \eqref{eqn:balance-bernoulli} holds for all primes $p$.

\begin{theorem} \label{thm:bernoulli}
Fix a constant $\varepsilon\in(0,1)$ and a finite set $S$ of nonnegative integers. For each $n\in\nn$, let $A(n)$ be an $n\times n$ matrix with independent random entries taking values in $S$ such that, for every $i,j\in\{1,\dots,n\}$ and every prime $p$,
\begin{equation} \label{eqn:balance-bernoulli}
\max_{s\in S}\pp\left(A(n)_{ij} \equiv s \mod p\right)\le 1-\varepsilon.
\end{equation}
Then:
\begin{enumerate}[1.]
\item \label{ber1} the graph $\hh_n$ associated with $A(n)$ is asymptotically almost surely strongly connected;
\item \label{ber2} $M(n)=A(n)^t-I$ is asymptotically almost surely nonsingular (and so is $A(n)$ itself);
\item \label{ber3} $\cs(\hh_n)$ is asymptotically almost surely stably isomorphic to a Cuntz polygon;
\item \label{ber4} for any finite set $P$ of primes and any finite abelian $P$-group $G$,
\begin{equation} \label{eqn:ersylow}
\lim_{n\to\infty} \pp\left(K_0(\cs(\hh_n))_P \cong G\right) = \frac{1}{|\Aut(G)|} \prod_{p\in P} \prod_{k=1}^\infty\left(1-p^{-k}\right);
\end{equation}
\item \label{ber5} assuming that the entries of $A(n)$ are identically distributed, the probability $c_n$ that $\cs(\hh_{n})$ is stably isomorphic to a Cuntz algebra satisfies
\begin{equation} \label{eqn:ercuntz}
\lim_{n\to\infty} c_n = \prod_{p\text{ prime}}\left(1+\frac{1}{p^2-p}\right) \prod_{k=2}^\infty\zeta(k)^{-1} \approx 0.84694,
\end{equation}
where $\zeta$ is the Riemann zeta function.
\end{enumerate}
\end{theorem}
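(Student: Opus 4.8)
The plan is to treat part~\ref{ber4} as the engine that drives the rest. By \eqref{eqn:kt}, $K_0(\cs(\hh_n))\cong\coker M(n)$ with $M(n)=A(n)^t-I$, so I would first check that $M(n)$ inherits the hypotheses of the cokernel theorem of \cite{Nguyen:2018vh}. Transposing a matrix with independent entries preserves independence, subtracting the fixed matrix $I$ does too, and since $s\mapsto s-1$ permutes residues modulo $p$ the balance condition \eqref{eqn:balance-bernoulli} passes from $A(n)$ to $M(n)$ verbatim; the transpose is in any case harmless because $B$ and $B^t$ share a Smith normal form and hence an isomorphic cokernel. Granting this, the universality result of \cite{Nguyen:2018vh} applies to the independent entries of $M(n)$ for every finite $P$ (legitimate here because, as noted before the statement, \eqref{eqn:balance-bernoulli} holds at all primes) and delivers \eqref{eqn:ersylow} together with the fact that the Cohen--Lenstra masses on the right sum to $1$, so that no mass escapes to infinity within any fixed finite $P$.

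For part~\ref{ber1} I would observe that the edge $(i,j)$ is present exactly when $A(n)_{ij}\neq0$, and the balance condition forces $\pp(A(n)_{ij}=0)\le1-\varepsilon$, so each edge appears independently with probability at least $\varepsilon$ and $\hh_n$ contains a copy of $\dd_{n,\varepsilon}$. As strong connectedness is monotone under adding edges, it suffices to run the classical union bound for $\dd_{n,\varepsilon}$: a digraph fails to be strongly connected iff some nonempty proper $T$ has no out-edge, so $\pp(\hh_n\text{ not strongly connected})\le\sum_{k=1}^{n-1}\binom{n}{k}(1-\varepsilon)^{k(n-k)}\to0$. Part~\ref{ber2} I would extract from part~\ref{ber4}: if $\det M(n)=0$ then $\coker M(n)$ is infinite, forcing $\mathrm{corank}_p(M(n))\ge1$ for \emph{every} prime $p$; hence for any finite $P$ one has $\pp(\det M(n)=0)\le\pp(\mathrm{corank}_p(M(n))\ge1\ \forall p\in P)$, whose $n\to\infty$ limit, by part~\ref{ber4}, is $\prod_{p\in P}\bigl(1-\prod_{k\ge1}(1-p^{-k})\bigr)$, and letting $P$ exhaust the primes sends this product to $0$. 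Part~\ref{ber3} is then bookkeeping: parts~\ref{ber1} and~\ref{ber2} hold a.a.s., $A(n)$ a.a.s. has a row with at least two nonzero entries and so is a.a.s. not a permutation matrix, and Proposition~\ref{prop:kpoly}.\ref{poly3} converts these into a.a.s. stable isomorphism with a Cuntz polygon.

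The substance of part~\ref{ber5} is a computation followed by an interchange of limits. A finite abelian group is cyclic iff each of its Sylow subgroups is, and $(K_0)_p$ is cyclic iff $\mathrm{corank}_p(M(n))\le1$. Summing \eqref{eqn:ersylow} over the cyclic $p$-groups $\zz/p^j\zz$ (using $|\Aut(\zz/p^j\zz)|=p^{j-1}(p-1)$ for $j\ge1$ and $|\Aut(0)|=1$), a geometric series collapses to
\[
\lim_{n\to\infty}\pp\bigl(K_0(\cs(\hh_n))_p\text{ cyclic}\bigr)=\Bigl(1+\tfrac{1}{p^2-p}\Bigr)\prod_{k=2}^\infty(1-p^{-k}),
\]
and, since \eqref{eqn:ersylow} is multiplicative over $p\in P$, the finite-$P$ version of part~\ref{ber5} is the product of these factors over $p\in P$; taking $P\uparrow$ and using $\prod_p\prod_{k\ge2}(1-p^{-k})=\prod_{k\ge2}\zeta(k)^{-1}$ reproduces \eqref{eqn:ercuntz}. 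The upper bound $\limsup_n c_n\le\prod_p(\cdots)$ is then immediate, since cyclicity of $K_0$ forces cyclicity of $(K_0)_P$ for every finite $P$.

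The hard part will be the matching lower bound, that is, controlling the tail of primes \emph{uniformly in $n$}. Here I would write $c_n\ge\pp((K_0)_p\text{ cyclic}\ \forall p\in P)-\sum_{p\notin P}\pp((K_0)_p\text{ non-cyclic})$ and bound each summand by a first-moment estimate: $(K_0)_p$ non-cyclic means $\mathrm{corank}_p\ge2$, and a group of $p$-rank $r\ge2$ admits at least $(p^2-1)(p^2-p)$ surjections onto $(\zz/p\zz)^2$, so $\pp((K_0)_p\text{ non-cyclic})\le\ee[\#\sur(\coker M(n),(\zz/p\zz)^2)]/((p^2-1)(p^2-p))$. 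The crux is the uniform (in $n$ and $p$) surjection-moment bound $\ee[\#\sur(\coker M(n),(\zz/p\zz)^2)]\le C$ coming from the machinery behind \cite{Nguyen:2018vh}; this is precisely where the identically distributed hypothesis is used, and it yields $\pp((K_0)_p\text{ non-cyclic})=O(p^{-4})$ over the range in which $p$ can divide $\det M(n)$, while the crude bound $|\det M(n)|\le(n\max S)^n$ forces $(K_0)_p$ trivial for larger $p$. As $\sum_p p^{-4}<\infty$, the tail $\sum_{p\notin P}$ is small uniformly in $n$ once $P$ is large, so letting $n\to\infty$ and then $P\uparrow$ gives $\liminf_n c_n\ge\prod_p(\cdots)$, closing the argument.
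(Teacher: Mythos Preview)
Your proposal has the right overall architecture, and for parts~\ref{ber1}, \ref{ber3} and the formal computation in part~\ref{ber5} you are essentially in step with the paper (which simply cites \cite{Frieze:2016tw}, \cite{Bourgain:2010aa} and \cite{Wood:2019wn} for parts~\ref{ber1}, \ref{ber2} and \ref{ber4} respectively, and then works through part~\ref{ber5}). There are, however, two genuine gaps.

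\textbf{Part~\ref{ber5}: the non-i.i.d.\ diagonal.} You invoke a uniform surjection-moment bound $\ee[\#\sur(\coker M(n),(\zz/p\zz)^2)]\le C$ ``coming from the machinery behind \cite{Nguyen:2018vh}'' and say ``this is precisely where the identically distributed hypothesis is used''. But the relevant machinery (actually \cite{Nguyen:2022uf}; \cite{Nguyen:2018vh} treats regular graphs) is stated for matrices with i.i.d.\ entries, and $M(n)=A(n)^t-I$ does \emph{not} have i.i.d.\ entries: the diagonal entries have distribution shifted by $-1$ relative to the off-diagonal ones. Your opening paragraph correctly notes that independence and the mod-$p$ balance condition survive the shift, but that suffices only for parts~\ref{ber1}--\ref{ber4}; the uniform-in-$p$ moment bounds underlying \cite[Theorem 1.2]{Nguyen:2022uf} are proved under the i.i.d.\ assumption, and you give no argument that they persist when the diagonal is distributed differently. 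The paper confronts exactly this point: it cannot quote \cite[Theorem 1.2]{Nguyen:2022uf} as a black box, and instead carries out a lemma-by-lemma inspection of \cite{Nguyen:2022uf} and \cite{Nguyen:2020aa} to verify that each ingredient (Odlyzko's lemma, the Erd\H{o}s--Littlewood--Offord bounds, the Fourier estimates on $|\widehat\mu|$) is insensitive to a deterministic shift of the diagonal distribution. That inspection is the real content of the paper's proof of part~\ref{ber5}, and it is missing from your plan.

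\textbf{Part~\ref{ber2}.} Your derivation from part~\ref{ber4} is circular as written. Part~\ref{ber4} concerns $(K_0)_P$, which in the paper's notation is the $P$-torsion of $\coker M(n)$; when $\det M(n)=0$ the cokernel has a free summand, but its $P$-torsion may still be trivial, so ``$\mathrm{corank}_p\ge1$'' and ``$(K_0)_p\ne0$'' differ precisely on the event $\{\det M(n)=0\}$ you are trying to control. One has $\pp(\mathrm{corank}_p\ge1\ \forall p\in P)\le\pp((K_0)_p\ne0\ \forall p\in P)+\pp(\det M(n)=0)$, and applying part~\ref{ber4} to the first term on the right leaves $\pp(\det M(n)=0)$ on both sides. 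The argument can be rescued by invoking the stronger statement (present in \cite{Wood:2019wn}) that $\coker M(n)\otimes\zz_p$, not merely its torsion, converges in distribution to a probability measure on finite $p$-groups---your earlier remark about ``no mass escapes to infinity'' is the right idea---but that is strictly more than part~\ref{ber4} as stated. The paper sidesteps the issue by citing \cite[Corollary 3.3]{Bourgain:2010aa} directly for nonsingularity.
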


\begin{proof}
\begin{enumerate}[1.]
\item For the first assertion, see \cite[Theorem13.9]{Frieze:2016tw}. While this theorem is specifically about loopless Bernoulli digraphs, adding loops or multiple edges does not affect connectivity.

\item The second assertion is a consequence of \cite[Corollary 3.3]{Bourgain:2010aa}; there, $S$ can be any finite set of complex numbers (with \eqref{eqn:balance-bernoulli} replaced by $\max_{s\in S}\pp\left(A(n)_{ij} = s\right)\le 1-\varepsilon$), so in particular, the entries of $M(n)$ also satisfy the necessary hypotheses. (In fact, it is shown there that the asymptotic probability of singularity is $(\sqrt{1-\varepsilon}+o(1))^n$.)

\item Since $A(n)$ with high probability is not a permutation matrix, and we have just established strong connectivity of $\hh_n$ and nonsingularity of $M(n)$, the third claim follows from Proposition~\ref{prop:kpoly}.\ref{poly3}.

\item For the computation of the distribution of Sylow $p$-subgroups, see \cite[Corollary 3.4]{Wood:2019wn}.

\item For the final equation, note that by definition and Theorem~\ref{eqn:cpk}.\ref{poly2}, and in light of Theorem~\ref{thm:bernoulli}.\ref{ber3}, $c_n$ is asymptotically equal to the probability that $\coker M(n) \cong K_0(\cs(\hh_n))$ is cyclic. Lest the appearance of the zeta function in the limiting value of $c_n$ seem overly mysterious, consider the following derivation, which is heuristic only in skipping the justification of exchanging the limits $n\to \infty$ and $|P|\to \infty$. Let us define the related quantity
\[
	c^P_n = \pp\left(K_0(\cs(\hh_n))_P \text{ is cyclic}\right) ,
\]
so that $\lim_{n\to \infty} c_n = \lim_{n\to \infty} \lim_{|P|\to\infty} c^P_n$. If we could exchange the two limits, then we could compute this by taking the $|P|\to \infty$ limit of~\eqref{eqn:ersylow}. First, recall that for any prime $p$ and any integer $N\ge1$
\[
|\Aut(\zz/p^N\zz)| = p^N(1-p^{-1}) ,
\]
as well as $|\Aut(G_1 \oplus G_2)| = |\Aut(G_1)| \cdot |\Aut(G_2)|$ when $|G_1|$ and $|G_2|$ are coprime. Let $N_P = (N_p)_{p\in P}$ denote a sequence of non-negative integers, and recall also that a cyclic group of order divisible only by some power of $p\in P$ must be of the form $\bigoplus_{p\in P} \zz/p^{N_p}\zz$, so that
\begin{align*}
	\lim_{n\to\infty} c_n^P
	&= \lim_{n\to\infty} \pp\left(K_0(\cs(\hh_n))_P \text{ is cyclic}\right) \\
	&= \lim_{n\to\infty} \sum_{N_P} \pp\left(K_0(\cs(\hh_n))_P \cong \bigoplus_{p\in P} \zz/p^{N_p}\zz\right) \\
	&\text{``=''} \sum_{N_P} \lim_{n\to\infty} \pp\left(K_0(\cs(\hh_n))_P \cong \bigoplus_{p\in P} \zz/p^{N_p}\zz\right) \\
	&= \sum_{N_P} \frac{1}{|\Aut\left(\bigoplus_{p\in P} \zz/p^{N_p}\zz\right)|} \prod_{p\in P} \prod_{k=1}^\infty \left(1-p^{-k}\right) \\
	&= \sum_{N_P} \prod_{p\in P} \frac{1}{|\Aut\left(\zz/p^{N_p}\zz\right)|} \prod_{k=1}^\infty \left(1-p^{-k}\right) \\
	&= \prod_{p\in P} \left(1 + \sum_{N=1}^\infty \frac{p^{-N}}{(1-p^{-1})}\right) \prod_{k=1}^\infty \left(1-p^{-k}\right) \\
	&= \prod_{p\in P} \left(1-\frac{1}{p}+\frac{1}{p-1}\right) \prod_{k=2}^\infty\left(1-p^{-k}\right)\\
	&= \left[\prod_{p\in P} \left(1+\frac{1}{p^2-p}\right)\right]
		\left[\prod_{k=2}^\infty \prod_{p\in P}\left(1-p^{-k}\right)\right] .
\end{align*}
By taking the limit $|P|\to\infty$ and using Euler's product formula
\[
\zeta(k)^{-1} = \prod_{p \text{ prime}}\left(1-p^{-k}\right),
\]
we recover~\eqref{eqn:ercuntz}.
Of course, in this heuristic derivation we have not justified the exchanges of the sum over $N_P$ and the $n\to\infty$, $|P|\to\infty$ limits and that is part of what is accomplished in \cite[Theorem 1.2]{Nguyen:2022uf} (with $u=0$ and $\alpha_n=\varepsilon$). On the other hand, while this theorem essentially does establish \eqref{eqn:ercuntz}, we cannot apply it automatically because the entries of $M(n)=A(n)^t-I$ are not identically distributed (the probability distribution for the diagonal entries has been shifted by $1$). However, a close inspection of its proof reveals that the conclusion remains valid. We now offer a brief justification of this claim.

One must show that, for every prime $p$, $\rank(M(n) \text{ mod }p) \ge n-1$. `Small', `medium' and `large' primes are considered separately. The case of small primes \cite[Proposition 2.1]{Nguyen:2022uf} is already observed for random matrices whose entries are not necessarily identical. The medium primes case \cite[Proposition 2.2]{Nguyen:2022uf} depends on Odlyzko's lemma \cite[Lemma 3.1]{Nguyen:2022uf} and its `union bound' corollary \cite[Corollary 3.2]{Nguyen:2022uf} (whose proofs are easily seen to cover random vectors with non-identical entries that share the same bound, which is certainly the case for us), \cite[Theorem 5.3]{Nguyen:2022uf} (which does not assume identical distributions), and \cite[Theorem 5.2]{Nguyen:2022uf} (the $u=0$ case of which is \cite[Theorem A.1]{Nguyen:2020aa}, discussed below). The large primes case \cite[Proposition 2.3]{Nguyen:2022uf}, our use of which falls under \cite[\S 6.1]{Nguyen:2022uf}, depends on the Erd\H{o}s--Littlewood--Offord result \cite[Theorem 6.3]{Nguyen:2022uf} (which by convexity, that is, conditioning over the possible values taken by the first entry of the random vector $X$ and using the triangle inequality, implies the version in which this entry is not identical to the others), \cite[Lemma 6.4]{Nguyen:2022uf} (which follows from Odlyzko's lemma), \cite[Lemma 6.5]{Nguyen:2022uf} (which is deterministic) and \cite[Lemma 6.2]{Nguyen:2022uf} (which follows from the previous three results).

It remains to carefully inspect the proof of \cite[Theorem A.1]{Nguyen:2020aa}, which is split into many lemmas, propositions and theorems. The key ones for us to examine are \cite[Theorem A.15]{Nguyen:2020aa} (which is the aforementioned Erd\H{o}s--Littlewood--Offord result), \cite[Lemma A.9]{Nguyen:2020aa}, \cite[Proposition A.18]{Nguyen:2020aa} and \cite[Lemma A.11]{Nguyen:2020aa}. The other results are either deterministic or are proved from these key ones (together with Odlyzko's lemma) without further appeal to the assumption of identically distributed entries.

For each of these key results, the only change is that instead of one measure $\mu$ specifying the distribution of the entries of the random vector $X$, there are two, $\mu$ and $\mu_1$ say (to account for the shifted first entry), and products of the form $\prod_{l=1}^n\widehat\mu(t_l)$ should be replaced by $\widehat\mu_1(t_1)\prod_{l=2}^n\widehat\mu(t_l)$. Here, $\widehat{\mu}$ denotes the Fourier transform defined on \cite[p.\ 287]{Nguyen:2020aa}, the absolute value of which is not affected by the shift $\mu_1(t)=\mu(t+1)$. So, the proof of \cite[Lemma A.9]{Nguyen:2020aa} carries over unchanged. For \cite[Proposition A.18]{Nguyen:2020aa} (which is used to prove \cite[Lemma A.11]{Nguyen:2020aa}), the measure $\nu_1$ associated to $\mu_1$ (obtained from \cite[Proposition 3.6]{Maples:2013aa}) has the same Fourier transform as $\nu$ associated to $\mu$ (since by construction, $\widehat\nu(\xi)=1-\gamma+\gamma|\widehat\mu(\xi)|^2$ for a suitable $\gamma\in(0,1)$), so again the same proof works (in fact with $\nu_1=\nu$). The rest of the proof of \cite[Lemma A.11]{Nguyen:2020aa} also needs no change.
\qedhere
\end{enumerate}
\end{proof}

\begin{remark}  \label{remark:logn}
Apart from expanding the analysis from finitely many primes to all primes simultaneously, a substantial portion of Nguyen and Wood's work in \cite{Nguyen:2022uf} goes towards proving that, for random matrices with identically distributed entries, the probability bound $1-\varepsilon$ can be weakened to $1-\alpha_n$ with $\alpha_n\ge n^{-1+\varepsilon}$. The argument we have described in the proof of Theorem~\ref{thm:bernoulli} works for $\alpha_n\ge n^{-1/6+\varepsilon}$ (which is the case considered in \cite[\S 6.1]{Nguyen:2022uf}), but a great deal more care is needed to extend to the general case. On the other hand, as pointed out in \cite{Nguyen:2022uf}, the bound $\alpha_n\ge n^{-1+\varepsilon}$ is asymptotically best possible: if the matrix entries can take the value $0$ with probability at least $1-\log n/n$, then with nonzero probability the matrix has a row of zeros, so is singular and in particular has infinite cokernel. The statistics of these sparser graphs are thus rather different, a fact that is supported by the data (see Table~\ref{table:dlog}).
\end{remark}

\subsection{Random regular multigraphs} \label{subsection:regular}

For a finite abelian group $G$, a symmetric, $\zz$-bilinear map $\varphi\colon G\times G\to\cc^*$ is a \emph{perfect pairing} if the only $g\in G$ with $\varphi(g,G)=1$ is $g=0$. (Equivalently, $\varphi$ induces an isomorphism $G\to \widehat{G} = \Hom(G,\cc^*)$ via $g\mapsto \varphi(g,\cdot)$.) We define
\begin{equation} \label{eqn:ng}
N(G) = \frac{|\{\text{symmetric, bilinear, perfect } \varphi\colon G\times G\to\cc^*\}|}{|G|\cdot|\Aut(G)|}.
\end{equation}
If $G$ is a finite abelian $p$-group
\[
G=\bigoplus_{i=1}^M \zz/p^{\lambda_i}\zz
\]
with $\lambda_1\ge\lambda_2\ge\dots\ge\lambda_M$, then
\begin{equation} \label{eqn:ngp}
N(G) = p^{-\sum_i\frac{\mu_i(\mu_i+1)}{2}}\prod_{i=1}^{\lambda_1}\prod_{j=1}^{\lfloor\frac{\mu_i-\mu_{i+1}}{2}\rfloor}(1-p^{-2j})^{-1},
\end{equation}
where $\mu_i=|\{j\mid \lambda_j\ge i\}|$ (see \cite[\S1]{Wood:2017tk}). It is worth noticing that, if the $\lambda_i$ are interpreted as the lengths of the rows of a Young diagram, the $\mu_j$ are the lengths of its columns.

The expression \eqref{eqn:kd} makes use of the following observation. If $G_1$ is a finite abelian $p_1$-group and $G_2$ is a finite abelian $p_2$-group with $p_2\ne p_1$, then $N(G) = N(G_1)\cdot N(G_2)$ for $G=G_1\oplus G_2$. After all, if we count the perfect pairings in the definition~\eqref{eqn:ng} of $N(G)$ as group homomorphisms $G\to \widehat{G}$, any such homomorphism comes from an independent pair of homomorphisms $G_1 \to \widehat{G}_1$ and $G_2 \to \widehat{G}_2$, which are themselves perfect pairings. The factorisation of $N(G)$ then follows by noting also the obvious factorisations $|G|=|G_1| \cdot |G_2|$ and $|\Aut(G)| = |\Aut(G_1)| \cdot |\Aut(G_2)|$.

The following adaptation of \cite{Nguyen:2018vh} was mostly observed in \cite{Jacelon:2023aa}.

\begin{theorem} \label{thm:regular}
Let $n\in2\nn$ and $3\le r\in\nn$, and recall that $\g_{n,r}$ denotes a random $r$-regular multigraph on $n$ vertices. Then:
\begin{enumerate}[1.]
\item \label{reg1} $\g_{n,r}$ is asymptotically almost surely strongly connected;
\item \label{reg2} $M(n)=A(n)^t-I$ is asymptotically almost surely nonsingular (and so is $A(n)$);
\item \label{reg3} $\cs(\g_{n,r})$ is asymptotically almost surely stably isomorphic to a Cuntz polygon;
\item \label{reg4} for any finite set $P$ of odd primes not dividing $r-1$, and any finite abelian $P$-group $G$,
\begin{equation} \label{eqn:kd}
\lim_{n\in2\nn}\pp\left(K_0(\cs(\g_{n,r}))_P\cong G\right) = \prod_{p\in P} N(G_p)\prod_{k=1}^\infty\left(1-p^{-2k+1}\right).
\end{equation}
\end{enumerate}
\end{theorem}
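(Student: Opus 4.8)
The plan is to funnel all four claims through the integer matrix $M(n) = A(n)^t - I$, exploiting the fact that in the regular-multigraph model $A(n)$ is \emph{symmetric}, so that $M(n) = A(n) - I$ is symmetric as well. With this reduction in place, Parts \ref{reg1}--\ref{reg3} run exactly parallel to the Bernoulli argument in the proof of Theorem \ref{thm:bernoulli}, only with the independence-based inputs swapped for their random-regular-graph counterparts, while Part \ref{reg4} is the substantive statement and amounts to a translation of \cite[Theorem 1.6]{Nguyen:2018vh} through the $K$-theoretic dictionary \eqref{eqn:kt}. Much of this was already noted in \cite{Jacelon:2023aa}, so I would present the proof as an assembly of cited results rather than a from-scratch computation.

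For Part \ref{reg1}, a random $r$-regular (multi)graph with $r \geq 3$ is asymptotically almost surely connected by the standard theory of random regular graphs (cf.\ \cite{Frieze:2016tw}), as already observed in \cite{Jacelon:2023aa}, and replacing each undirected edge by a pair of opposing directed edges upgrades connectivity to strong connectivity. For Part \ref{reg2}, nonsingularity of $M(n) = A(n) - I$ is \cite[Corollary 4.2]{Nguyen:2018vh}, which gives $\ker(A(n)-I) = 0$ with high probability (and the analogous nonsingularity of $A(n)$ itself holds by the same circle of results). For Part \ref{reg3}, observe that $A(n)$ is a.a.s.\ not a permutation matrix (an $r$-regular graph with $r \geq 3$ has no permutation adjacency matrix), so combining this with Parts \ref{reg1} and \ref{reg2} lets Proposition \ref{prop:kpoly}.\ref{poly3} apply verbatim, just as in the Bernoulli case.

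The heart is Part \ref{reg4}. By \eqref{eqn:kt}, $K_0(\cs(\g_{n,r})) \cong \coker(A(n)^t - I)$, and by symmetry this equals $\coker(A(n) - I)$. The limiting distribution of the Sylow $p$-subgroups of $\coker(A(n) - I)$ for random $r$-regular graphs, valid for odd primes $p$ with $p \nmid r - 1$, is precisely \cite[Theorem 1.6]{Nguyen:2018vh}; the factorisation over $p \in P$ reflects the asymptotic independence of the Sylow subgroups at distinct primes established in the same moment computation. I would take care to explain why the answer differs from the non-symmetric formula \eqref{eqn:ersylow}: the weight $N(G_p)$ (counting symmetric perfect pairings) in place of $1/|\Aut(G_p)|$, and the normalising product $\prod_{k\ge1}(1 - p^{-2k+1})$ with odd exponents in place of $\prod_{k\ge1}(1-p^{-k})$, are both signatures of the \emph{symmetric} Cohen--Lenstra measure, the one governing cokernels of symmetric (rather than arbitrary) random integer matrices.

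The main obstacle is not something I would reprove but rather something I would carefully locate: the surjection-moment computation $\mathbb{E}[\#\sur(\coker(A(n)-I), G)]$ underlying \cite[Theorem 1.6]{Nguyen:2018vh} is delicate precisely because the entries of $A(n)$ are highly dependent (symmetric, with every row summing to $r$). The two hypotheses on $P$ are exactly what the method needs. The constraint $p \nmid r-1$ arises because $(A(n)-I)(1,\dots,1)^t = (r-1)(1,\dots,1)^t$, so when $p \mid r-1$ the all-ones vector lies in the kernel modulo $p$ and forces a deterministic contribution to the $p$-part of the cokernel, derailing the universal formula (this is the regime of the new heuristics in Conjecture \ref{conjecture:regular}). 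The exclusion of $p = 2$ reflects the fact that symmetric bilinear forms over $\mathbb{F}_2$ interact with the diagonal in a way that $N(G)$ does not capture, so \eqref{eqn:kd} is not expected to hold there. These two restrictions carve out exactly the regime covered by \cite[Theorem 1.6]{Nguyen:2018vh}, and the only remaining work is the bookkeeping of \eqref{eqn:kt}, already carried out in \cite{Jacelon:2023aa}.
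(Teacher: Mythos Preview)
Your proposal is correct and follows essentially the same approach as the paper's own proof: both assemble the four parts from cited external results, with Parts~\ref{reg1} and~\ref{reg4} deferred to \cite{Jacelon:2023aa} (which in turn invokes \cite{Nguyen:2018vh}), Part~\ref{reg2} from \cite[Corollary~4.2]{Nguyen:2018vh}, and Part~\ref{reg3} from Proposition~\ref{prop:kpoly}.\ref{poly3} via the row-sum observation that $A(n)$ cannot be a permutation matrix. Your additional commentary on the origin of the hypotheses $p\nmid r-1$ and $p\neq 2$, and on the appearance of the symmetric Cohen--Lenstra weight $N(G_p)$, is useful context that the paper's terse proof omits but does not contradict.
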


\begin{proof}
The first and last assertions appear in \cite[Theorem 6.3]{Jacelon:2023aa}. The second then follows from \cite[Corollary 4.2]{Nguyen:2018vh} (see also the rest of the proof of \cite[Theorem 1.6]{Nguyen:2018vh}), and the third then follows from Proposition~\ref{prop:kpoly}.\ref{poly3}, since $A(n)$ is not a permutation matrix (because each of its rows sums to $r$) and we have already established asymptotic strong connectivity of $E$ as well as asymptotic nonsingularity of $A(n)^t-I$.
\end{proof}
 
In particular, for any odd prime $p$ coprime to $r-1$ and any integer $N\ge0$,
\begin{align} \label{eqn:ncyclic}
\lim_{n\in2\nn}\pp\left(K_0(\cs(\g_{n,r}))_p\cong \zz/p^N\zz\right) &= p^{-N}\prod_{k=1}^\infty\left(1-p^{-2k+1}\right)\\
&\approx p^{-N}, \nonumber
\end{align}
while
\begin{align} \label{eqn:cyclicn}
\lim_{n\in2\nn}\pp\left(K_0(\cs(\g_{n,r}))_p\cong (\zz/p\zz)^N\right) &= p^{-\frac{N(N+1)}{2}}\prod_{j=1}^{\lfloor N/2 \rfloor}(1-p^{-2j})^{-1} \prod_{k=1}^\infty\left(1-p^{-2k+1}\right)\\
&\approx p^{-\frac{N(N+1)}{2}}, \nonumber
\end{align}
the approximations holding for sufficiently large primes $p$.

Using \eqref{eqn:kd} and \eqref{eqn:ncyclic}, we can compute the asymptotic probability that $(K_0(\cs(\g_{n,r}))_P$ is cyclic, for any \emph{finite} set $P$ of odd primes not dividing $r-1$. The following also applies to Erd\H{o}s--R\'{e}nyi graphs $\ee_{n,q}$ for $P=\{p\}$, where $p$ can be any prime (see Theorem~\ref{thm:erdos}).

\begin{proposition} \label{prop:pcyclic}
Let $(\ee_n)_{n\in\nn}$ be a family of random graphs and $P$ a finite set of primes such that
\begin{equation} \label{eqn:kdgeneral}
\lim_{n\in\nn}\pp\left(K_0(\cs(\ee_n))_P\cong G\right) = \prod_{p\in P} N(G_p)\prod_{k=1}^\infty\left(1-p^{-2k+1}\right)
\end{equation}
for every finite abelian $P$-group $G$. Then,
\begin{equation} \label{eqn:allcyclic}
\lim_{n\in\nn}\pp\left(K_0(\cs(\ee_n))_P \text{ is cyclic}\right) = \prod_{p\in P} \prod_{k=2}^\infty\left(1-p^{-2k+1}\right).
\end{equation}
\end{proposition}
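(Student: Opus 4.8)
The plan is to decompose the event that $K_0(\cs(\ee_n))_P$ is cyclic into the disjoint union of the events $K_0(\cs(\ee_n))_P \cong \bigoplus_{p\in P}\zz/p^{N_p}\zz$, as $N_P=(N_p)_{p\in P}$ ranges over all tuples of non-negative integers, then pass to the limit using the hypothesis \eqref{eqn:kdgeneral}, and finally evaluate the resulting product of series in closed form.

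First I would record that for each fixed $n$ the assignment $G\mapsto\pp(K_0(\cs(\ee_n))_P\cong G)$ is a genuine probability distribution on the countable set of isomorphism classes of finite abelian $P$-groups: indeed $\coker M(n)$ is a finitely generated abelian group, so its $P$-primary torsion $K_0(\cs(\ee_n))_P$ is always a finite abelian $P$-group. Since such a group is cyclic exactly when each of its Sylow subgroups is cyclic, and a cyclic $p$-group is of the form $\zz/p^{N}\zz$, the cyclic $P$-groups are precisely the $\bigoplus_{p\in P}\zz/p^{N_p}\zz$. Hence, for each $n$,
\[
\pp\left(K_0(\cs(\ee_n))_P \text{ is cyclic}\right)=\sum_{N_P}\pp\left(K_0(\cs(\ee_n))_P\cong\bigoplus_{p\in P}\zz/p^{N_p}\zz\right).
\]

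The crux is to interchange $\lim_{n}$ with this infinite sum. In contrast to the heuristic computation of $c_n$ in the proof of Theorem~\ref{thm:bernoulli}, here $P$ is finite and this single interchange can be justified rigorously by Scheff\'e's lemma. The limiting mass function $\mu_\infty(G)=\prod_{p\in P}N(G_p)\prod_{k\ge1}(1-p^{-2k+1})$ is itself a probability distribution, since $\sum_{G_p}N(G_p)=\prod_{k\ge1}(1-p^{-2k+1})^{-1}$ (the normalisation identity underlying the distributions of \cite{Wood:2017tk}); combined with the pointwise convergence of the probability mass functions furnished by \eqref{eqn:kdgeneral} and the fact that each pre-limit function has total mass $1$, this forces convergence in total variation, and therefore $\pp(\,\cdot\,\text{ is cyclic})$ converges to the $\mu_\infty$-measure of the cyclic classes.

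It then remains to evaluate $\sum_{N_P}\mu_\infty\bigl(\bigoplus_{p\in P}\zz/p^{N_p}\zz\bigr)$. Because $N$ factorises over coprime orders, the summand is a product over $p\in P$ and the sum splits as $\prod_{p\in P}\bigl(\prod_{k\ge1}(1-p^{-2k+1})\sum_{N\ge0}N(\zz/p^{N}\zz)\bigr)$. Specialising \eqref{eqn:ngp} to $G=\zz/p^N\zz$ (where $\mu_i=1$ for $1\le i\le N$, so the exponent $\sum_i\mu_i(\mu_i+1)/2$ equals $N$ and every inner product is empty) gives $N(\zz/p^N\zz)=p^{-N}$, whence $\sum_{N\ge0}N(\zz/p^N\zz)$ is the geometric series $(1-p^{-1})^{-1}$. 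This precisely cancels the $k=1$ factor $(1-p^{-1})$ of $\prod_{k\ge1}(1-p^{-2k+1})$, leaving $\prod_{k\ge2}(1-p^{-2k+1})$ for each $p$, and taking the product over $P$ yields \eqref{eqn:allcyclic}. The only genuine obstacle is the limit–sum interchange; everything else is a bookkeeping computation.
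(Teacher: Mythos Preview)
Your proof is correct and follows the same overall plan as the paper: decompose cyclicity into the countable disjoint union over $N_P$, justify the interchange of $\lim_n$ with the sum, and then evaluate the resulting series using $N(\zz/p^N\zz)=p^{-N}$.

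The only substantive difference is the tool used for the interchange. The paper invokes the Vitali convergence theorem, verifying equismallness at infinity directly from the summability of the limiting distribution and pointwise convergence. You instead use Scheff\'e's lemma, which is cleaner here: nonnegativity, pointwise convergence, and equality of total masses immediately give $L^1$ convergence, hence convergence of the measure of any fixed set of isomorphism classes. The price is that Scheff\'e genuinely needs $\sum_G\mu_\infty(G)=1$, not merely $<\infty$; you supply this via the normalisation identity $\sum_{G_p}N(G_p)=\prod_{k\ge1}(1-p^{-2k+1})^{-1}$ from \cite{Wood:2017tk}. The paper's Vitali argument, by contrast, only needs summability of $\mu_\infty$ (which already follows from Fatou), though the paper also refers to the limit as a probability distribution. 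So your route trades a small extra citation for a shorter argument; both are perfectly valid.
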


\begin{proof}
As in the proof of Theorem~\ref{thm:bernoulli}, let us write $c_n^P:=\pp\left(K_0(\cs(\ee_n))_P \text{ is cyclic}\right)$, and for $N_P = (N_p)_{p\in P}$ let $c_n^{N_P} = \pp\left(K_0(\cs(\ee_n))_P \cong \bigoplus_{p\in P} \zz/p^{N_p}\zz\right)$, so that $c_n^P = \sum_{N_P} c_n^{N_P}$.
To show the equality~\eqref{eqn:allcyclic} we essentially want to interchange the limit with the summation to get
\begin{equation} \label{eqn:c_n-vitali}
	\lim_{n\in\nn} c_n^P
	= \lim_{n\in\nn} \sum_{N_p} c_n^{N_P}
	= \sum_{N_P} \lim_{n\in\nn} c_n^{N_P}
\end{equation}
and sum the right-hand side, where by hypothesis~\eqref{eqn:kdgeneral} and~\eqref{eqn:ncyclic} we have
\begin{align*}
	\sum_{N_P} \lim_{n\in\nn} c_n^{N_P}
	&= \sum_{N_P} \prod_{p\in P} p^{-N_p} \prod_{k=1}^\infty \left(1-p^{-2k+1}\right) \\
	&= \prod_{p\in P} \sum_{N_p=0}^\infty p^{-N_p} \prod_{k=1}^\infty \left(1-p^{-2k+1}\right) \\
	&= \prod_{p\in P} \prod_{k=2}^\infty \left(1-p^{-2k+1}\right) .
\end{align*}

The interchange in~\eqref{eqn:c_n-vitali} is justified by the Vitali convergence theorem \cite[Theorem III.6.15]{Dunford:1958}, which in general applies to the interchange of a limit with integration of absolutely summable functions over a measure space. In our case, with the pointwise convergence of $c_n^{N_P}$ over the discrete measure space of all $N_P$'s, the remaining hypothesis to check is \emph{equismallness at infinity} (a.k.a\ \emph{equisummability at infinity}; for series this is spelled out in~\cite[Theorem 44.2]{Treves:1967}). Namely, for every $\varepsilon>0$ we need to find a finite set $X_\varepsilon$ such that for all $n$
\[
	\sum_{N_P \not\in X_\varepsilon} c_n^{N_P} < \varepsilon .
\]
By virtue of being probability distributions (hence absolutely summable, with no need to take absolute values because of pointwise positivity) there exist finite sets $Z_\varepsilon$ and $Z_{n,\varepsilon}$ of abelian $P$-groups such that
\[
	\sum_{G\not\in Z_\varepsilon} \lim_{m\in\nn}\pp\left(K_0(\cs(\ee_m))_P \cong G\right) < \frac{\varepsilon}{2}
	\quad\text{and}\quad
	\sum_{G\not\in Z_{n,\varepsilon}} \pp\left(K_0(\cs(\ee_n))_P \cong G\right) < \varepsilon .
\]
By pointwise convergence of the probability distributions in~\eqref{eqn:kdgeneral}, there also exists an $M\in\nn$ such that for all $n\ge M$ and $G\in Z_{\varepsilon}$
\[
\left|\pp\left(K_0(\cs(\ee_n))_P \cong G\right) - \lim_{m\in\nn}\pp\left(K_0(\cs(\ee_m))_P \cong G\right) \right| < \frac{\varepsilon}{2|Z_\varepsilon|} .
\]
Hence, we can set $Z_{n,\varepsilon} = Z_\varepsilon$ for all $n\ge M$. Noting that $c^{N_P}$ and $c_n^{N_P}$ are restrictions of the above probability distributions to cyclic groups and setting $X_\varepsilon$ to be the intersection of the set $Z_\varepsilon \cup \bigcup_{n<M} Z_{n,\varepsilon}$ with cyclic groups shows that $c_n^{N_P}$ is equismall at infinity, allowing us to apply Vitali's convergence theorem in~\eqref{eqn:c_n-vitali}.
\end{proof}

\begin{remark} \label{remark:regrem}
If $r=2^j+1$ for some $j$, then \eqref{eqn:ncyclic}, \eqref{eqn:cyclicn} and \eqref{eqn:allcyclic} hold for $\g_{n,r}$ for any finite set $P$ of odd primes (as all odd primes are coprime to $r-1=2^j$). This is why we we pay special attention to these values of $r$ in Section~\ref{section:stats}. But we do not have any theoretical distribution for $p=2$. In addition, because of the subtleties arising from the two limits, $n\to\infty$ and enlarging $P$ to the set of all primes, one cannot immediately extend \eqref{eqn:allcyclic} to infinitely many primes. For Bernoulli digraphs, this more delicate analysis is carried out in \cite{Nguyen:2022uf}, but there is as yet no such result for $r$-regular multigraphs. These are exactly the obstacles to computing the asymptotic probability that $\cs(\g_{n,r})$ is stably isomorphic to a Cuntz algebra, which we mentioned in the Introduction.
\end{remark}

\subsection{Erd\H{o}s--R\'{e}nyi graphs} \label{subsection:erdosrenyi}

The following in particular applies to Erd\H{o}s--R\'{e}nyi graphs $\ee_{n,q}$, the symmetric versions of $\dd_{n,q}$, provided that $q\in(0,1)$.

\begin{theorem} \label{thm:erdos}
Fix a constant $\varepsilon\in(0,1)$ and a finite set $S$ of nonnegative integers. For each $n\in\nn$, let $A(n)$ be a symmetric $n\times n$ matrix such that, for every $i\le j\in\{1,\dots,n\}$, the entries $A(n)_{ij}$ are independent random variables taking values in $S$ that satisfy
\begin{equation} \label{eqn:balance-erdos}
\max_{s\in S}\pp\left(A(n)_{ij} \equiv s \mod p\right)\le 1-\varepsilon
\end{equation}
for every prime $p$. Then:
\begin{enumerate}[1.]
\item \label{er1} the graph $\ee_n$ associated with $A(n)$ is asymptotically almost surely strongly connected;
\item \label{er2} $M(n)=A(n)^t-I$ is asymptotically almost surely nonsingular (and so is $A(n)$ itself);
\item \label{er3} $\cs(\ee_n)$ is asymptotically almost surely stably isomorphic to a Cuntz polygon;
\item \label{er4} for any prime $p$ and any finite abelian $p$-group $G$,
\begin{equation} \label{eqn:kder}
\lim_{n\in\nn}\pp\left(K_0(\cs(\ee_n))_p\cong G\right) = N(G)\prod_{k=1}^\infty\left(1-p^{-2k+1}\right),
\end{equation}
where $N(G)$ is as in \eqref{eqn:ng}, \eqref{eqn:ngp}.
\end{enumerate}
\end{theorem}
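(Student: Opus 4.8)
The plan is to follow the template of the proofs of Theorem~\ref{thm:bernoulli} and Theorem~\ref{thm:regular}, dispatching the four assertions in turn by combining classical random-graph estimates for connectivity, a symmetric-matrix singularity result for $K_1$-triviality, Proposition~\ref{prop:kpoly}.\ref{poly3} for the structural consequence, and Wood's symmetric universality theorem for the $K_0$ distribution. The recurring technical point, inherited from the Bernoulli case, is that $M(n)=A(n)^t-I=A(n)-I$ (which is symmetric, since $A(n)$ is) has its diagonal shifted by $1$ relative to the raw adjacency matrix, so I must check that each cited input survives this shift.

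For the first assertion I would reduce strong connectivity of the directed multigraph $\ee_n$ to connectivity of its underlying undirected simple graph, since every undirected edge produces a pair of oppositely oriented directed edges while loops and multiplicities are irrelevant. Choosing any prime $p>\max S$, the residues mod $p$ separate the elements of $S$, so \eqref{eqn:balance-erdos} gives $\pp(A(n)_{ij}=0)\le 1-\varepsilon$ and hence each off-diagonal edge is present independently with probability at least $\varepsilon$. The underlying simple graph therefore stochastically dominates the Erd\H{o}s--R\'{e}nyi graph $G(n,\varepsilon)$, which for constant $\varepsilon$ is asymptotically almost surely connected (classical; see, e.g., \cite{Frieze:2016tw}), and monotonicity of connectivity finishes the argument.

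The second assertion I would obtain from the singularity result for symmetric random matrices, namely \cite[Corollary 4.2]{Nguyen:2018vh}, exactly as invoked in the proof of Theorem~\ref{thm:regular}. Here the only thing to verify is that subtracting $1$ from each diagonal entry preserves the hypotheses: since shifting an integer-valued entry by $1$ merely relabels its residues mod $p$, the balance bound \eqref{eqn:balance-erdos} holds verbatim for $M(n)$ for every prime $p$. With parts~\ref{er1} and~\ref{er2} in hand, and noting that $A(n)$ is asymptotically almost surely not a permutation matrix (it has $\approx \varepsilon\binom{n}{2}$ edges), the third assertion is immediate from Proposition~\ref{prop:kpoly}.\ref{poly3}.

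The fourth assertion is where the real content lies, and I would deduce it from Wood's universality theorem for cokernels of symmetric random matrices \cite[Theorem 3.12]{Wood:2023aa}, which produces precisely the distribution $N(G)\prod_{k=1}^\infty(1-p^{-2k+1})$ and, crucially, covers every prime $p$ including $p=2$ (the feature distinguishing $\ee_n$ from $\g_{n,r}$, where $p=2$ and $p\mid r-1$ remain out of reach, and the reason the symmetric factor $N(G)$ rather than the $1/|\Aut(G)|$ of Theorem~\ref{thm:bernoulli}.\ref{ber4} appears). I expect the main obstacle to again be the diagonal shift. If Wood's theorem is formulated for symmetric matrices whose off-diagonal entries are independent and whose diagonal entries obey a possibly different but still balanced law satisfying an anti-concentration bound of the form \eqref{eqn:balance-erdos}, then the passage from $A(n)$ to $A(n)-I$ is absorbed directly into the ``different diagonal distribution'' already permitted, since the shifted diagonal law still satisfies \eqref{eqn:balance-erdos} for all $p$. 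Should the cited statement instead assume identically distributed diagonal and off-diagonal entries, I would inspect its proof to confirm---as was done for the Bernoulli case in the proof of Theorem~\ref{thm:bernoulli}---that the argument depends only on the $\varepsilon$-balance of the entries through the moduli of their Fourier transforms, which are unaffected by an integer shift, and is therefore insensitive to the modification of the diagonal.
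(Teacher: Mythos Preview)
Your proposal is correct and matches the paper's own proof, which invokes exactly the same four inputs: \cite{Erdos:1959aa}/\cite[Theorem 4.1]{Frieze:2016tw} for connectivity, \cite[Corollary 4.2]{Nguyen:2018vh} for nonsingularity, Proposition~\ref{prop:kpoly}.\ref{poly3} for the Cuntz-polygon conclusion, and \cite[Theorem 3.12]{Wood:2023aa} for the $K_0$ distribution. Your added discussion of the diagonal shift and the stochastic-domination argument for connectivity are more detailed than what the paper actually writes (its proof is a terse three lines), but they are appropriate elaborations rather than a different route; the one minor structural difference is that the paper establishes part~\ref{er4} before part~\ref{er2} (the word ``then'' hints that \cite[Corollary 4.2]{Nguyen:2018vh} deduces nonsingularity \emph{from} the fact that the limiting cokernel distribution is a genuine probability measure on finite groups), whereas you treat it as a standalone singularity input---either reading yields the conclusion.
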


\begin{proof}
The first assertion follows from \cite{Erdos:1959aa} (see \cite[Theorem 4.1]{Frieze:2016tw}) and the last from \cite[Theorem 3.12]{Wood:2023aa}. The second then follows from \cite[Corollary 4.2]{Nguyen:2018vh}, and  the third then follows from Proposition~\ref{prop:kpoly}, exactly as in the proof of Theorem~\ref{thm:bernoulli}.\ref{ber3}.
\end{proof}

By Theorem~\ref{thm:erdos}.\ref{er4}, \emph{all} Sylow $p$-subgroups of $K_0(\cs(\ee_{n,q}))$, including $p=2$, follow the limiting behaviour \eqref{eqn:ncyclic}, \eqref{eqn:cyclicn} and \eqref{eqn:allcyclic} with $P=\{p\}$. The data we have collected for these graphs are consistent with this (see Figure~\ref{fig:bargraphsymer2} and Table~\ref{table:eq}).

\subsection{The signed Bowen--Franks group} \label{subsection:flow}

As discussed in the Introduction, every (directed multi-) graph $E$ has an associated edge shift $\sigma \colon \Sigma_E \to \Sigma_E$, whose flow equivalence class is determined by the signed Bowen--Franks group $\mathrm{BF}_+(E)=\sgn(\det(I-A_E))\coker(I-A_E)$ (assuming that $E$ is strongly connected and does not consist of a single cycle). For the random graph models discussed above, we are interested in the asymptotic probability of flow equivalence to a full shift  (whose Bowen--Frank invariant is $-\zz/k\zz$ for some $k$). If we make the heuristic assumption that positivity of the determinant is asymptotically independent of cyclicity of the cokernel (a not unreasonable notion given the data; see Tables~\ref{table:dnqextra},~\ref{table:dnqextra2},~\ref{table:enqextra},~\ref{table:gnrextra}), then we are left with the question: what is the shape of the distribution of $\det(I-A(n))$?

There is one situation in which symmetry is immediately apparent, namely, when the entries of $A(n)-I$ are identically distributed, simply because in this case swapping rows preserves the distribution. Moreover, our careful adaptation of \cite[Theorem 1.2]{Nguyen:2022uf} in the proof of Theorem~\ref{thm:bernoulli}.\ref{ber5} is now no longer necessary. A direct application this theorem tells us that the asymptotic probability of cyclicity is once again given by \eqref{eqn:ercuntz}.

In summary of the above discussion, we have the following. Recall that $\zeta$ denotes the Riemann zeta function.

\begin{theorem} \label{thm:bf}
Fix a constant $\varepsilon\in(0,1)$ and a finite set $S$ of nonnegative integers. For each $n\in\nn$, let $B(n)$ be an $n\times n$ matrix with independent, identically distributed random entries taking values in $S$ such that, for every $i,j\in\{1,\dots,n\}$ and every prime $p$,
\[
\max_{s\in S}\pp\left(B(n)_{ij} \equiv s \mod p\right)\le 1-\varepsilon.
\]
Let $\hh_n$ be the graph whose adjacency matrix is $A(n)=B(n)+I$. Then, the asymptotic probability that the edge shift $\sigma \colon \Sigma_{\hh_n} \to \Sigma_{\hh_n}$ is flow equivalent to a full shift is
\begin{equation} \label{eqn:fullshift}
\frac{1}{2}\prod_{p\text{ prime}}\left(1+\frac{1}{p^2-p}\right) \prod_{k=2}^\infty\zeta(k)^{-1}
\end{equation}
which, as a percentage, is about $42$.
\end{theorem}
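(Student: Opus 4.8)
The plan is to translate everything into the random matrix $B(n)$, reduce flow equivalence to a full shift to the conjunction of a \emph{cyclicity} event and a \emph{sign} event, and then replace the (otherwise heuristic) independence of these two events by an exact distributional symmetry furnished by the hypothesised pair of identically distributed rows or columns.

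First I would record the algebraic consequences of the relation $A(n)=B(n)+I$. We have $M(n)=A(n)^t-I=B(n)^t$ and $I-A(n)=-B(n)$, so that $\coker(I-A(n))=\coker(B(n))\cong\coker(M(n))$ and $\det(I-A(n))=(-1)^n\det B(n)$. Next I would establish, asymptotically almost surely, the structural hypotheses needed to invoke the Bowen--Franks classification of flow equivalence discussed in the Introduction: strong connectivity of $\hh_n$ (the support of $B(n)$ stochastically dominates a Bernoulli digraph $\dd_{n,\varepsilon}$, since the balance condition with $s=0$ and $p>\max S$ gives $\pp(B(n)_{ij}\ne 0)\ge\varepsilon$, whence Theorem~\ref{thm:bernoulli}.\ref{ber1} applies); nonsingularity of $M(n)=B(n)^t$ (by the Bourgain--Vu--Wood estimate invoked in Theorem~\ref{thm:bernoulli}.\ref{ber2}, which allows arbitrary $S$-valued balanced entries); and the fact that $A(n)$ is not a permutation matrix (it carries a loop at every vertex). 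Under these events, Proposition~\ref{prop:kpoly}.\ref{poly3} makes $\cs(\hh_n)$ a Cuntz polygon up to stable isomorphism, and Franks' theorem says that $\sigma\colon\Sigma_{\hh_n}\to\Sigma_{\hh_n}$ is flow equivalent to a full shift precisely when $\coker(I-A(n))$ is (finite) cyclic and $\det(I-A(n))<0$.

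The cyclicity input is imported wholesale: since $M(n)=B(n)^t$ has $S$-valued balanced entries and, in contrast to the Bernoulli case, carries no diagonal shift, the cokernel universality of Nguyen--Wood---whose proof, as inspected in the proof of Theorem~\ref{thm:bernoulli}.\ref{ber5}, relies only on independence and the balance condition---gives $\pp(\coker(B(n))\text{ cyclic})\to$ the value in \eqref{eqn:ercuntz}, approximately $0.84694$. The heart of the proof is the sign event. Let $\tau$ be the transposition swapping the two identically distributed rows (or columns), so that $\tau(B(n))=PB(n)$ (respectively $B(n)P$) for a transposition matrix $P\in\gln(\zz)$. Multiplication by $P$ leaves the cokernel unchanged, $\coker(PB(n))\cong\coker(B(n))$, hence preserves cyclicity, while $\det(PB(n))=-\det B(n)$. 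Since the two swapped rows are independent and equidistributed and all entries are independent, $\tau$ is a measure-preserving involution, so $\tau(B(n))\stackrel{d}{=}B(n)$. This yields the \emph{exact} finite-$n$ identity
\[
\pp\big(\coker(B(n))\text{ cyclic},\ \det B(n)>0\big)=\pp\big(\coker(B(n))\text{ cyclic},\ \det B(n)<0\big).
\]

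To conclude, I would add the two sides and discard the event $\{\det B(n)=0\}$, whose probability tends to $0$ by nonsingularity; this shows each side converges to half of \eqref{eqn:ercuntz}. Because $\det(I-A(n))=(-1)^n\det B(n)$ and the two sign events for $\det B(n)$ are exactly equiprobable on the cyclic locus, the parity factor $(-1)^n$ is immaterial, and $\pp(\coker(I-A(n))\text{ cyclic},\ \det(I-A(n))<0)$ converges to the quantity \eqref{eqn:fullshift}, about $42\%$. The one step I expect to be genuinely load-bearing is the exact symmetry: the general discussion preceding the theorem can only \emph{assume} that positivity of the determinant is asymptotically independent of cyclicity, whereas the identically distributed pair of rows or columns upgrades this to an exact identity via the involution $\tau$, which is precisely why the asymptotic probability can be pinned down rather than merely estimated. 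The remaining analytic difficulty, namely the value $0.84694$, is not re-proved here but inherited from the established cokernel universality.
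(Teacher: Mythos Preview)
Your proposal is correct and follows essentially the same route as the paper: the row/column swap furnishes a measure-preserving involution that flips $\det B(n)$ while preserving $\coker B(n)$, giving exact sign symmetry on the cyclic locus, and the cyclicity probability comes from Nguyen--Wood applied directly to $B(n)=A(n)-I$ (with no diagonal shift to worry about). Your write-up is in fact more explicit than the paper's on two points---that the swap preserves the cokernel (not just the determinant magnitude), and that the parity factor $(-1)^n$ is harmless---both of which the paper leaves to the reader.
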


Just as Theorem~\ref{thm:bernoulli} in particular applies to Bernoulli digraphs $\dd_{n,q}$, Theorem~\ref{thm:bf}  applies to \emph{shifted} Bernoulli digraphs $\dd_{n,q}+I$ (see Table~\ref{table:dnqextra2}). These graphs have either one loop (with probability $1-q$) or two (with probability $q$) at every vertex, with other edges distributed in the same way as for $\dd_{n,q}$. With randomness specified in this manner, the pithy interpretation of \eqref{eqn:fullshift} is that this is the asymptotic probability that a random subshift of finite type is flow equivalent to a full shift. Of course, it might be reasonable to expect that, asymptotically, shifting by the identity matrix should not have any effect on the distribution of the determinant. The data do support this expectation (see Table~\ref{table:dnqextra}), and we conjecture it to be true.

\begin{conjecture} \label{conjecture:fullshift}
For $q\in(0,1)$,
\begin{multline*}
\lim_{n\to\infty} \pp\left(\sigma \colon \Sigma_{\dd_{n,q}} \to \Sigma_{\dd_{n,q}} \text{ is flow equivalent to a full shift}\right) \\
 = \frac{1}{2}\prod_{p\text{ prime}}\left(1+\frac{1}{p^2-p}\right) \prod_{k=2}^\infty\zeta(k)^{-1}
  \approx 0.42347.
\end{multline*}
\end{conjecture}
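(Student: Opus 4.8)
The plan is to reduce the statement about flow equivalence to a \emph{joint} statement about the cokernel and the determinant of $I-A(n)$, and then to exploit the row-swap symmetry afforded by the two identical rows in order to decouple these two quantities \emph{exactly} in the limit rather than merely heuristically. First I would set up the reduction. Writing $A(n)=B(n)+I$, observe that $M(n):=A(n)^t-I=B(n)^t$ and $I-A(n)=-B(n)$, so that $K_0(\cs(\hh_n))\cong\coker(M(n))=\coker(B(n)^t)\cong\coker(B(n))\cong\coker(I-A(n))$ while $\det(I-A(n))=(-1)^n\det(B(n))$. Since $B(n)$ has nonnegative entries, every diagonal entry of $A(n)$ equals $B(n)_{ii}+1\ge1$, so $\hh_n$ carries a loop at each vertex; as the off-diagonal entries of $A(n)$ coincide with those of $B(n)$, each of which is nonzero with probability at least $\varepsilon$ by \eqref{eqn:balance-bernoulli} (taking $p$ larger than $\max S$), the graph $\hh_n$ is a.a.s.\ strongly connected exactly as in Theorem~\ref{thm:bernoulli}.\ref{ber1}, and a.a.s.\ some off-diagonal entry is nonzero so $A(n)$ is a.a.s.\ not a permutation matrix. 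With these two properties in hand, Franks' theorem applies: $\sigma\colon\Sigma_{\hh_n}\to\Sigma_{\hh_n}$ is flow equivalent to a full shift if and only if $\mathrm{BF}_+(\hh_n)=-\zz/k\zz$ for some $k$, that is, if and only if $\coker(I-A(n))$ is cyclic \emph{and} $\det(I-A(n))<0$.

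Next I would assemble the two ingredients. For cyclicity, $M(n)=B(n)^t$ has independent entries satisfying \eqref{eqn:balance-bernoulli}, so the (adapted) Nguyen--Wood argument already carried out in the proof of Theorem~\ref{thm:bernoulli}.\ref{ber5} yields $\pp(\coker(B(n))\text{ cyclic})\to \prod_{p}\bigl(1+(p^2-p)^{-1}\bigr)\prod_{k\ge2}\zeta(k)^{-1}$, the value appearing in \eqref{eqn:ercuntz}; here the diagonal shift that complicated that proof is \emph{absent}, so when $B(n)$ has identically distributed entries one may invoke \cite[Theorem 1.2]{Nguyen:2022uf} directly, and otherwise the same inspection of non-identical entries sharing the balance bound applies verbatim. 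For nonsingularity, $\pp(\det B(n)=0)\to0$ by Theorem~\ref{thm:bernoulli}.\ref{ber2} (applied to $B(n)^t$, whose entries take values in the finite set $S$ with the required bound).

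The crucial step, and the one I expect to be the main point, is the symmetry argument. Let $\tau$ be the permutation matrix that swaps the two rows of $B(n)$ with identical vector distributions. Then $\tau B(n)$ has the same law as $B(n)$; left multiplication by the unimodular $\tau$ leaves $\coker(B(n))$, hence the cyclicity event $C$, unchanged; and $\det(\tau B(n))=-\det(B(n))$. Consequently the measure-preserving transformation induced by $\tau$ interchanges the events $C\cap\{\det B(n)>0\}$ and $C\cap\{\det B(n)<0\}$, so these have equal probability. Combining with $\pp(C\cap\{\det B(n)=0\})\to0$ gives $\pp(C\cap\{\det B(n)<0\})=\pp(C\cap\{\det B(n)>0\})\to\tfrac12$ times the limiting value in \eqref{eqn:ercuntz}. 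Since the flow-equivalence event is $C\cap\{\det(I-A(n))<0\}=C\cap\{(-1)^n\det B(n)<0\}$, which equals $C\cap\{\det B(n)<0\}$ or $C\cap\{\det B(n)>0\}$ according to the parity of $n$, either way its asymptotic probability is exactly half of \eqref{eqn:ercuntz}, namely \eqref{eqn:fullshift}. The point of recognising the \emph{joint} (rather than marginal) symmetry is precisely that it upgrades the independence assumption flagged in the discussion preceding the theorem from a heuristic to a theorem: cyclicity is $\tau$-invariant while the sign of the determinant is $\tau$-anti-invariant, so no genuine asymptotic independence of the two events needs to be established.
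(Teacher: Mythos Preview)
Your argument is essentially a correct proof of Theorem~\ref{thm:bf}, but it does not address Conjecture~\ref{conjecture:fullshift}. The statement in question concerns the \emph{unshifted} Bernoulli digraph $\dd_{n,q}$, whose adjacency matrix $A(n)$ has i.i.d.\ Bernoulli$(q)$ entries. If you set $B(n)=A(n)-I$ as you do, the diagonal entries of $B(n)$ take values in $\{-1,0\}$ (so your claim that $B(n)$ has nonnegative entries already fails), while the off-diagonal entries take values in $\{0,1\}$. Hence row $i$ of $B(n)$ has its uniquely distributed entry sitting in position $i$, and \emph{no two rows of $B(n)$ share the same vector distribution}. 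The row-swap $\tau$ on which your decoupling hinges therefore does not exist for $\dd_{n,q}$. Note also that the obvious attempts to manufacture one fail: swapping two rows of $A(n)$ preserves the law of $A(n)$ but $\tau(I-A(n))=\tau-\tau A(n)\ne I-\tau A(n)$, so the determinant of $I-A(n)$ is not simply negated; conjugating by $\tau$ preserves both the law of $A(n)$ and the determinant of $I-A(n)$, yielding no sign flip at all. This asymmetry is exactly why the paper proves the shifted case $\dd_{n,q}+I$ as Theorem~\ref{thm:bf} but leaves the unshifted case as a conjecture supported only by the data in Table~\ref{table:dnqextra}.

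To actually prove Conjecture~\ref{conjecture:fullshift} one would need a different mechanism: either an argument that the diagonal shift by $I$ is asymptotically negligible for the joint law of $(\sgn\det(I-A(n)),\text{cyclicity of }\coker(I-A(n)))$, or some other route to asymptotic symmetry of the sign conditioned on cyclicity. Your symmetry trick, elegant as it is for the shifted model, does not supply this.
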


For the symmetric graphs $\ee_{n,q}$, it is expected but not yet proved (though supported by the data; see Table~\ref{table:eq}), that Proposition~\ref{prop:pcyclic} should hold when $P$ is enlarged to the set of all primes. In other words, we expect that
\begin{equation} \label{eqn:ecyclic}
\lim_{n\to\infty} \pp\left(K_0(\cs(\ee_{n,q})) \text{ is cyclic}\right) = \prod_{p\text{ prime}} \prod_{k=2}^\infty\left(1-p^{-2k+1}\right) \approx 0.79352.
\end{equation}
It seems however that for $\ee_{n,q}$ (and also $\g_{n,r}$), the determinant may not be symmetrically distributed about the origin and its asymptotic behaviour is a little harder to predict (see Tables~\ref{table:enqextra},~\ref{table:gnrextra}). This means that we cannot simply bisect \eqref{eqn:ecyclic} (or for $\g_{n,r}$, \eqref{eqn:gnrcuntz}) and conjecture that value as the asymptotic full-shift probability.

\begin{question} \label{q1}
What is the asymptotic distribution of the signs of of $\det(I-A(n))$ when $A(n)$ is the adjacency matrix of $\ee_{n,q}$ or $\g_{n,r}$? Is it still the case that negativity of the determinant and cyclicity are asymptotically independent events? Namely, do the probabilities
\begin{equation} \label{eqn:enq-shiftprob}
\begin{aligned}
\delta_{n,q} &:=  \pp\left(\det(I-A(n))<0\right) \\
\sigma_{n,q} &:= \pp\left(\det(I-A(n))<0 \wedge \coker(I-A(n)) \text{ cyclic}\right)
\end{aligned}
\end{equation}
for $\ee_{n,q}$ ($q\in(0,1)$) and
the probabilities
\begin{equation} \label{eqn:gnr-shiftprob}
\begin{aligned}
\varepsilon_{n,r} &:=  \pp\left(\det(I-A(n))<0\right) \\
\tau_{n,r} &:= \pp\left(\det(I-A(n))<0 \wedge \coker(I-A(n)) \text{ cyclic}\right)
\end{aligned}
\end{equation}
for $\g_{n,r}$ ($r\ge3$) converge, and if so, what are their limits $\delta_q$, $\sigma_q$, $\varepsilon_r$, $\tau_r$?
\end{question}

\subsection{Exact isomorphism} \label{subsection:exact}

We described in Section~\ref{subsection:exact} how, practically speaking, we can check whether a graph algebra is exactly (rather than stably) isomorphic to a Cuntz polygon (or Cuntz algebra). In Section~\ref{section:stats}, we provide estimates of the asymptotic probabilities of these events for $\dd_{n,q}$ and $\ee_{n,q}$ (see Tables~\ref{table:dnqextra},~\ref{table:enqextra}). Here, we offer an explanation of the numbers that we see there.

\begin{conjecture} \label{conjecture:bernoulli}
For $q\in(0,1)$,
\begin{align}
\lim_{n\to\infty} \pp\left(\cs(\dd_{n,q}) \text{ is isomorphic to a Cuntz algebra}\right) &= \prod_{k=2}^\infty\zeta(k)^{-1} \notag\\
&\approx 0.43576, \label{eqn:dcuntz}\\
\lim_{n\to\infty} \pp\left(\cs(\ee_{n,q}) \text{ is isomorphic to a Cuntz algebra}\right) &= \prod_{p \text{ prime}} \left(1-\frac{1}{p^2}\right) \prod_{k=2}^\infty\left(1-p^{-2k+1}\right) \notag\\
&\approx 0.51451, \label{eqn:ecuntz}\\
\lim_{n\to\infty} \pp\left(\cs(\dd_{n,q}) \text{ is isomorphic to a Cuntz polygon}\right) &= \prod_{p \text{ prime}} \left(1+\frac{1}{p^2-p}\right)^{-1} \notag\\
& \approx 0.48240, \label{eqn:dexact}\\
\lim_{n\to\infty} \pp\left(\cs(\ee_{n,q}) \text{ is isomorphic to a Cuntz polygon}\right) &= \prod_{p \text{ prime}} \left(1-\frac{1}{p^2}\right) \notag \\
&\approx 0.60793. \label{eqn:eexact}
\end{align}
\end{conjecture}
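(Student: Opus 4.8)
The plan is to combine an algebraic classification of the relevant pointed groups with a universality statement for the class of the unit. Since $\cs(\dd_{n,q})$ and $\cs(\ee_{n,q})$ are asymptotically almost surely UCT Kirchberg algebras with trivial $K_1$ (Theorems~\ref{thm:bernoulli} and~\ref{thm:erdos}), the Kirchberg--Phillips theorem determines their isomorphism class by the pointed group $(K_0,[1]_0)$. Writing $M(n)=A(n)^t-I$ and $v=(1,\dots,1)^t$, we have $(K_0,[1]_0)\cong(\coker M(n),[v])$, so each of the four events is an intrinsic property of the random pointed group $(\coker M(n),[v])$. The proof then splits into (i) translating each event into a condition on an abstract pointed finite abelian group $(G,g)$, and (ii) determining the limiting \emph{joint} distribution of $(\coker M(n),[v])$.

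For step (i) I would prove the following elementary lemma: a pointed finite abelian group $(G,g)$ is isomorphic to $(K_0(\ppm),[1]_0)=\bigl(\bigoplus_i\zz/m_i\zz,(1,\dots,1)\bigr)$ for some $\bar m$ if and only if $g$ has maximal order, $\mathrm{ord}(g)=\exp(G)$; and it is moreover a pointed Cuntz algebra $(\zz/N\zz,1)$ exactly when $G$ is in addition cyclic (equivalently, $g$ generates $G$). The forward direction is immediate since $\mathrm{ord}((1,\dots,1))=\mathrm{lcm}(m_i)=\exp(G)$. For the converse one reduces via the Sylow decomposition to a $p$-group $G_p=\bigoplus_j\zz/p^{\lambda_j}\zz$ and an element $g_p$ of order $p^{\lambda_1}$; since a maximal-order element generates a cyclic direct summand, a short redistribution argument (replacing a chosen generator $c_1$ of a top factor by $g_p-\sum_{j\ge2}c_j$) produces a decomposition in which $g_p$ is a generator of every factor. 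This reconciles with and sharpens the orbit description of Section~\ref{subsection:unit}: the canonical class $(1,\dots,1)$ is a maximal-order element, and all maximal-order elements lie in a single $\Aut(G)$-orbit.

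For step (ii), the key assertion is that, jointly for each fixed finite set of primes $P$, the Sylow $P$-part of $(\coker M(n),[v])$ converges to $(G,g)$ where $G$ is distributed as in~\eqref{eqn:ersylow} (for $\dd_{n,q}$) or~\eqref{eqn:kder} (for $\ee_{n,q}$) and, conditionally on $G$, the element $g$ is \emph{uniform} in $G$. I would establish this by a pointed moment method: compute the expected number of surjections $\phi\colon\zz^n\to H$ with $\phi M(n)=0$ and $\sum_i\phi(e_i)=h$ for each pointed $p$-group $(H,h)$, show these converge to the corresponding pointed moments $M_H/|H|$ of the target model, and invoke moment-determinacy for random finite abelian groups (as in \cite{Wood:2019wn} and its refinements). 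Uniformity of $g$ is driven by the observation that, although $v$ is deterministic, $\phi(v)=\sum_i\phi(e_i)$ is a sum of $n$ nearly independent elements of $H$ and hence equidistributes as $n\to\infty$. The non-identically-distributed diagonal of $M(n)=A(n)^t-I$ is handled exactly as in the proof of Theorem~\ref{thm:bernoulli}.\ref{ber5} (the Fourier-transform inspection of \cite{Nguyen:2022uf,Nguyen:2020aa}), and the passage from finite $P$ to all primes simultaneously is carried out as there and as in Proposition~\ref{prop:pcyclic}.

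Granting (i) and (ii), the four constants follow by a routine computation at each prime together with a product over primes, justified by the equismallness/Vitali argument of Proposition~\ref{prop:pcyclic}. For the $\zz$-distribution one finds $\pp(G_p\text{ cyclic},\ g_p\text{ a generator})\to\prod_{k\ge2}(1-p^{-k})$ and $\pp(g_p\text{ of maximal order})\to(1+\tfrac{1}{p^2-p})^{-1}$, yielding~\eqref{eqn:dcuntz} and~\eqref{eqn:dexact}; for the symmetric $N(G)$-distribution the same computation with~\eqref{eqn:kder} gives $(1-p^{-2})\prod_{k\ge2}(1-p^{-2k+1})$ and $(1-p^{-2})$, yielding~\eqref{eqn:ecuntz} and~\eqref{eqn:eexact}. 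The main obstacle is squarely step (ii): promoting the known universality of $\coker M(n)$ to a \emph{pointed} universality that controls the class of the unit and holds across all primes at once. This joint statement is not covered by the cited results, which is precisely why these formulas are recorded as a conjecture.
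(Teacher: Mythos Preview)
Your proposal is sound as a heuristic derivation (which is all the paper offers, since this is a conjecture), and it takes a genuinely different, more structured route than the paper's own ``heuristic proof''. The paper simply \emph{postulates} two principles---(\emph{U}) that the class of the unit is uniformly distributed in $K_0$ conditioned on cyclicity, and (\emph{I}) that cyclicity and ``unit in the Cuntz-polygon orbit'' are asymptotically independent---and then computes the four constants, deriving \eqref{eqn:dexact} and \eqref{eqn:eexact} as the ratio $\pp(\mathscr{C}\cap\mathscr{U})/\pp(\mathscr{C})$ via~(\emph{I}). You instead (a)~prove the clean algebraic lemma that the pointed group $(G,g)$ lies in the Cuntz-polygon orbit if and only if $\mathrm{ord}(g)=\exp(G)$, and (b)~replace~(\emph{I})+(\emph{U}) by a single, stronger and more natural hypothesis: that conditionally on the full group $G$, the unit class is uniform. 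Your proposed pointed-moment strategy for~(b) is a concrete programme for a proof, absent from the paper. What your approach buys is that the Cuntz-polygon probability becomes a direct expectation $\sum_G\pp(G)\cdot m(G)/|G|$ (with $m(G)$ the number of maximal-order elements), with no need to invoke independence; what the paper's approach buys is that the computation for \eqref{eqn:dexact},\eqref{eqn:eexact} reduces immediately to the already-computed \eqref{eqn:dcuntz},\eqref{eqn:ecuntz} divided by the known cyclicity probabilities. Both routes land on the same per-prime factors, but your identity $\sum_\lambda c_p\,m(G_\lambda)/(|G_\lambda|\,|\Aut G_\lambda|)=(1+\tfrac{1}{p^2-p})^{-1}$ is not quite as routine as you suggest---it is a sum over all partitions, not just the cyclic ones---and deserves at least a sentence of justification (it is correct, and in fact your stronger uniformity hypothesis \emph{implies} the paper's independence assumption, which is an interesting observation in its own right). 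You correctly identify that step~(ii), the pointed universality, is the genuine gap and the reason this remains a conjecture.
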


\begin{proof}[Heuristic `proof']
The derivation is easy if we are willing to freely and recklessly use the heuristic principles of \emph{independence} (I) and \emph{uniformity} (U). More precisely, let us assume that if $\hh_n$ is a suitably random family of graphs, then the following hold asymptotically.
\begin{enumerate}
\item[(I)] The events $\mathscr{C}_n$: `$K_0(\cs(\hh_n))$ is cyclic' and $\mathscr{U}_n$: `$[1_{\cs(\hh_n)}]$ is in the same automorphism orbit in $K_0(\cs(\hh_n))$ as the unit of the corresponding Cuntz polygon' are independent.
\item[(U)] Conditioned on $\mathscr{C}_n$, $[1_{\cs(\hh_n)}]$ is uniformly distributed in $K_0(\cs(\hh_n))$.
\end{enumerate}
Suppose that $K_0(\cs(\hh_n)) \cong \bigoplus_{p\in P} \zz/p^{N_p}\zz=: G_{N_P}$ for some finite set $P$ of primes. The class of the unit of the corresponding Cuntz algebra is a generator of the cyclic group $G_{N_P}$. Its automorphism orbit is the set of elements of full order, of which there are $\prod_{p\in P}p^{N_p}(1-p^{-1}) = |\Aut(G_{N_P})|$ many. Applying (U) in this case, the conditional probability that $\cs(\hh_n)$ is isomorphic to $\oo_{1+|G_{N_P}|}$ is
\[
\frac{|\Aut(G_{N_P})|}{|G_{N_P}|} = \prod_{p \in P}(1-p^{-1}).
\]
Then, proceeding as in the heuristic argument presented in the proof Theorem~\ref{thm:bernoulli}.\ref{ber5} and making no attempt to justify any rearrangement of limits,
\begin{align*}
\lim_{n\to\infty}\pp\left(\cs(\hh_n)\cong\oo_k \text{ for some } k\right) &= \lim_{n\to\infty}\pp(\mathscr{C}_n \cap \mathscr{U}_n)  \\
&= \lim_{n\to\infty} \sum_{N} \frac{|\Aut(G_{N_P})|}{|G_{N_P}|} \pp(K_0(\cs(\hh_n)) \cong G_{N_P}) \\
&= \prod_{p \text{ prime}} \sum_{N} \frac{|\Aut(\zz/p^N\zz)|}{|\zz/p^N\zz|} \lim_{n\to\infty} \pp\left(K_0(\cs(\hh_n))_p \cong \zz/p^N\zz\right).
\end{align*}
Applying \eqref{eqn:ersylow} in the case $\hh_n=\dd_{n,q}$, we get 
\begin{align*}
\lim_{n\to\infty}\pp\left(\cs(\dd_{n,q})\cong\oo_k \text{ for some } k\right) &= 
\prod_{p \text{ prime}} \sum_{N} \frac{|\Aut(\zz/p^N\zz)|}{|\zz/p^N\zz|} \cdot \frac{1}{|\Aut(\zz/p^N\zz)|}\prod_{k=1}^\infty\left(1-p^{-k}\right) \\
&= \prod_{p \text{ prime}} \sum_{N} p^{-N}\prod_{k=1}^\infty\left(1-p^{-k}\right) \\
&= \prod_{p \text{ prime}} \prod_{k=2}^\infty\left(1-p^{-k}\right) \\
&= \prod_{k=2}^\infty\zeta(k)^{-1}.
\end{align*}
From this we deduce that
\begin{align*}
\lim_{n\to\infty} \pp\left(\cs(\dd_{n,q})\cong\ppm \text{ for some } \bar{m}\right) &= \lim_{n\to\infty} \pp(\mathscr{U}_n) \\
&= \lim_{n\to\infty} \frac{\pp(\mathscr{C}_n \cap \mathscr{U}_n)}{\pp(\mathscr{C}_n)} &&\text{(appealing to (I))} \\
&= \prod_{p \text{ prime}} \left(1+\frac{1}{p^2-p}\right)^{-1} &&\text{(by \eqref{eqn:dcuntz} and \eqref{eqn:ercuntz})}.
\end{align*}
For $\hh_n=\ee_{n,q}$, recall that the normalisation factor $N(G_P)$ defined in \eqref{eqn:ngp} and appearing in \eqref{eqn:kder} simplifies to $N(\zz/p^N\zz)=\prod_{p \text{ prime}}p^{-N}=|G_P|^{-1}$ for the cyclic group $G_P$ (as in \eqref{eqn:ncyclic}). Assuming that \eqref{eqn:kder} behaves well when passing to the set of all primes, we have
\begin{align*}
\lim_{n\to\infty}\pp\left(\cs(\ee_{n,q})\cong\oo_k \text{ for some } k\right) &= 
\prod_{p \text{ prime}} \sum_{N} \frac{|\Aut(\zz/p^N\zz)|}{|\zz/p^N\zz|^2} \prod_{k=1}^\infty\left(1-p^{-2k+1}\right)\\
&= \prod_{p \text{ prime}} \left(1+\sum_{N=1}^\infty \frac{p^N(1-p^{-1})}{p^{2N}}\right) \prod_{k=1}^\infty\left(1-p^{-2k+1}\right) \\
&= \prod_{p \text{ prime}} \left(1+\frac{1}{p}\right) \prod_{k=1}^\infty\left(1-p^{-2k+1}\right) \\
&= \prod_{p \text{ prime}} \left(1-\frac{1}{p^2}\right) \prod_{k=2}^\infty\left(1-p^{-2k+1}\right).
\end{align*}
By the same token, assuming \eqref{eqn:ecyclic} as in the discussion following Conjecture~\ref{conjecture:fullshift}, we have
\begin{equation} \label{eqn:ecn}
\lim_{n\to\infty} \pp(\mathscr{C}_n) =  \prod_{p \text{ prime}}\prod_{k=2}^\infty\left(1-p^{-2k+1}\right)
\end{equation}
and then
\begin{align*}
\lim_{n\to\infty} \pp\left(\cs(\ee_{n,q})\cong\ppm \text{ for some } \bar{m}\right) &= \lim_{n\to\infty} \frac{\pp(\mathscr{C}_n \cap \mathscr{U}_n)}{\pp(\mathscr{C}_n)} &&\text{(by (I))}\\
&= \prod_{p \text{ prime}} \left(1-\frac{1}{p^2}\right) &&\text{(by \eqref{eqn:ecuntz} and \eqref{eqn:ecn})}. \qedhere
\end{align*}
\end{proof}

By contrast, it appears to be \emph{very} rare for $\cs(\g_{n,r})$ to be isomorphic to a Cuntz polygon. Indeed, for the values of $n$ and $r$ for which we collected data, this event essentially never happened (see Table~\ref{table:gnrextra}). But it is not an impossibility. The following example was generated by our computer code.

\begin{example} \label{example:regular}
Let $E$ be the $8$-regular graph on $6$ vertices whose adjacency matrix is
\[
A =
\begin{pmatrix}
	0 & 3 & 3 & 1 & 0 & 1 \\
	3 & 0 & 0 & 2 & 3 & 0 \\
	3 & 0 & 0 & 2 & 1 & 2 \\
	1 & 2 & 2 & 0 & 1 & 2 \\
	0 & 3 & 1 & 1 & 0 & 3 \\
	1 & 0 & 2 & 2 & 3 & 0
\end{pmatrix}.
\]
Since $A-I$ is not a permuted diagonal matrix, its underlying graph is obviously not that of a Cuntz polygon itself. The SNF factorisation $UMV=D$ of $M=A-I$ is
\[
\left(\begin{smallmatrix}
	1 & -6 & -6 & -6 & -6 & -6 \\
	0 & 1 & 0 & 0 & 0 & 0 \\
	0 & 0 & 1 & 0 & 0 & 0 \\
	0 & 0 & 0 & 1 & 0 & 0 \\
	0 & 0 & 0 & 0 & 1 & 0 \\
	0 & 0 & 0 & 0 & 0 & 1
\end{smallmatrix}\right)
\left(\begin{smallmatrix}
	-1 & 3 & 3 & 1 & 0 & 1 \\
	3 & -1 & 0 & 2 & 3 & 0 \\
	3 & 0 & -1 & 2 & 1 & 2 \\
	1 & 2 & 2 & -1 & 1 & 2 \\
	0 & 3 & 1 & 1 & -1 & 3 \\
	1 & 0 & 2 & 2 & 3 & -1
\end{smallmatrix}\right)
\left(\begin{smallmatrix}
	13 & 13 & 11 & 11 & 10 & 9 \\
	13 & 19 & 6 & 10 & 13 & 6 \\
	-1 & -6 & 3 & 0 & -3 & 2 \\
	-1 & -2 & 0 & -1 & -1 & 0 \\
	-8 & -5 & -9 & -7 & -5 & -7 \\
	-15 & -18 & -10 & -12 & -13 & -9
\end{smallmatrix}\right)
=
\left(\begin{smallmatrix}
	7 & 0 & 0 & 0 & 0 & 0 \\
	0 & 1 & 0 & 0 & 0 & 0 \\
	0 & 0 & 1 & 0 & 0 & 0 \\
	0 & 0 & 0 & 1 & 0 & 0 \\
	0 & 0 & 0 & 0 & 1 & 0 \\
	0 & 0 & 0 & 0 & 0 & 1
\end{smallmatrix}\right),
\]
from which we can deduce that 
\[
(K_0(\cs(E),[1_{\cs(E)}],K_1(\cs(E))) \cong (\zz/7\zz,-29,0) \cong (\zz/7\zz,1,0),
\]
since $29$ and $7$ are coprime. The conclusion is that $\cs(E)$ is isomorphic to $\oo_8$.
\end{example}

\begin{question} \label{q2}
Is it true that, for $r\ge 3$,  $\lim_{n\to\infty}\pp\left(\cs(\g_{n,r}) \cong\ppm \text{ for some } \bar{m}\right) = 0$?
\end{question}

\subsection{Contiguity} \label{subsection:contiguity}
By definition, events that occur asymptotically almost surely according to one random graph model also occur asymptotically almost surely for any \emph{contiguous} model. For example, the perfect matchings model $\g_{n,r}$ analysed in Section~\ref{subsection:regular} is contiguous to the \emph{uniform model} $\mathbb{G}'_{n,r}$, that is, a random element of the set of $r$-regular multigraphs with the uniform distribution, conditioned on there being no loops (see the discussion and references in \cite[Chapter 21]{Frieze:2016tw}). Had we adopted this model instead, the resultant graph algebras would therefore still be stably isomorphic to Cuntz polygons with high probability (see Theorem~\ref{thm:regular}). As for the `Erd\H{o}s--R\'{e}nyi' graphs $\ee_{n,q}$ of Section~\ref{subsection:erdosrenyi}, a better name for them might have been `Gilbert' after his analysis of this model in \cite{Gilbert:1959aa}. Moreover, the model $\mathbb{E}_{n,m}$ actually employed in \cite{Erdos:1959aa} is a different one, specifically the uniform distribution on the set of undirected graphs with $m$ edges on $n$ labelled vertices (where $n\in\nn$ and $0\le m\le \binom{n}{2}$). To borrow a phrase from \cite{Frieze:2016tw}, these two models $\ee_{n,q}$ and $\ee_{n,m}$ are `almost contiguous' in a quantifiably precise sense. This principle, when adapted for digraphs and combined with the asymptotic probabilities of singularity and strong connectedness in this setting, leads to the observation (Theorem~\ref{thm:uniform}) that \emph{most} of the $\cs$-algebras associated to digraphs on large vertex sets with suitably many edges are stably isomorphic to Cuntz polygons.

\begin{definition} \label{def:dnm}
For integers $m_1,m_2,n$ with $n\ge1$ and $0\le m_1,m_2\le \binom{n}{2}$, let $\mathcal{G}_{n,m_1,m_2}$ denote the set of directed graphs  on $n$ labelled vertices with $m_1$ `forward' edges (that is, edges $e$ with $s(e) < r(e)$, corresponding to above-diagonal entries of the adjacency matrix) and $m_2$ `backward' ones (edges $e$ with $s(e) > r(e)$, or below-diagonal entries of the adjacency matrix). Loops ($s(e)=r(e)$) are allowed, but not multiple edges. We let $\mathbb{U}_{n,m_1,m_2}$ denote an element of $\mathcal{G}_{n,m_1,m_2}$ chosen randomly according to the uniform distribution, that is,
\begin{equation} \label{eqn:uniform}
\pp(\mathbb{U}_{n,m_1,m_2}=E) = \frac{1}{2^n\binom{n}{m_1}\binom{n}{m_2}}
\end{equation}
for every $E\in\mathcal{G}_{n,m_1,m_2}$ (the factor of $2^n$ accounting for loops).
\end{definition}

It is sometimes useful to think of a directed graph $E$ on vertices $\{1,\dots,n\}$ as being obtained by overlaying three undirected ones ($E_0$ described by the diagonal entries of the adjacency matrix, $E_1$ the above-diagonal entries the and $E_2$ the below-diagonal ones). For example,  if $E_1$ and $E_2$ are both connected, then $E$ is strongly connected. We take up this point of view in the proof of the following.

\begin{theorem} \label{thm:uniform}
If $m_1=m_1(n)$ and $m_2=m_2(n)$ satisfy $0\le m_1,m_2\le \binom{n}{2}$ and
\begin{equation} \label{eqn:mbalance}
\lim_{n\to\infty}\frac{m_i}{\binom{n}{2}} =: \widehat{q}_i \in (0,1),\quad i=1,2,
\end{equation}
then $\cs(\mathbb{U}_{n,m_1,m_2})$ is asymptotically almost surely stably isomorphic to a Cuntz polygon. 
\end{theorem}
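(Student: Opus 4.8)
The plan is to verify, asymptotically almost surely, the three hypotheses of Proposition~\ref{prop:kpoly}.\ref{poly3}: that $\mathbb{U}_{n,m_1,m_2}$ is strongly connected, that its adjacency matrix $A(n)$ is not a permutation matrix, and that $M(n)=A(n)^t-I$ is nonsingular. The last of these is the substantial point, and the strategy throughout is to transfer the corresponding facts for Bernoulli digraphs (Theorem~\ref{thm:bernoulli}) to the uniform model by a conditioning (`almost contiguity') argument.

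First I would set up an auxiliary independent-entry model $\tilde{\dd}_n$ tailored to the counts $m_1,m_2$. Writing $\mathcal{N}=\binom{n}{2}$ and $q_i=m_i/\mathcal{N}\to\widehat{q}_i\in(0,1)$, let the above-diagonal entries of $\tilde{\dd}_n$ be independent $\mathrm{Bernoulli}(q_1)$, the below-diagonal entries independent $\mathrm{Bernoulli}(q_2)$, and the diagonal entries independent $\mathrm{Bernoulli}(1/2)$. Because all entries lie in $\{0,1\}$ and $q_1,q_2$ are eventually bounded away from $0$ and $1$, the balance condition~\eqref{eqn:balance-bernoulli} holds for every prime with a single $\varepsilon=\tfrac12\min\{\widehat q_1,1-\widehat q_1,\widehat q_2,1-\widehat q_2,1/2\}>0$; note that Theorem~\ref{thm:bernoulli}.\ref{ber1} and~\ref{ber2} do not require identically distributed entries. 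The key observation is that conditioning $\tilde{\dd}_n$ on the event $\{F=m_1,\,B=m_2\}$, where $F$ and $B$ count the above- and below-diagonal ones, reproduces exactly the law of $\mathbb{U}_{n,m_1,m_2}$: the forward edges become uniform over all $m_1$-subsets, the backward edges uniform over all $m_2$-subsets, and the loops---being independent of $(F,B)$ and $\mathrm{Bernoulli}(1/2)$---are uniform, matching the factor $2^n$ in~\eqref{eqn:uniform}.

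The transfer then rests on the elementary inequality $\pp(\mathbb{U}_{n,m_1,m_2}\in\mathcal{B})\le \pp(\tilde{\dd}_n\in\mathcal{B})/\pp(F=m_1,B=m_2)$, valid for every event $\mathcal{B}$. Since $F\sim\mathrm{Bin}(\mathcal{N},q_1)$ and $B\sim\mathrm{Bin}(\mathcal{N},q_2)$ are independent and $m_i=\mathcal{N}q_i$ is exactly their mean, the local central limit theorem gives $\pp(F=m_1)\,\pp(B=m_2)\ge c/n^2$ for some $c>0$. It therefore suffices to show that the relevant failure probabilities for $\tilde{\dd}_n$ decay faster than $n^{-2}$. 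For $\mathcal{B}=\{M(n)\text{ singular}\}$ this is immediate: Theorem~\ref{thm:bernoulli}.\ref{ber2} (via Bourgain--Vu--Wood) bounds it by $(\sqrt{1-\varepsilon}+o(1))^n$, which is exponentially small, so dividing by $c/n^2$ still yields $o(1)$. For $\mathcal{B}=\{\text{not strongly connected}\}$ I would use the standard exponential bound for dense random digraphs---summing $\binom{n}{s}(1-\varepsilon)^{s(n-s)}$ over cut sizes $s$, whose dominant term $n(1-\varepsilon)^{n-1}$ is already exponentially small---rather than the merely $o(1)$ rate recorded in Theorem~\ref{thm:bernoulli}.\ref{ber1}; here the overlay picture of Section~\ref{subsection:contiguity} (each of $E_1,E_2$ being a dense $\ee_{n,m_i}$, hence a.a.s. connected) supplies the intuition. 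Finally, $A(n)$ cannot be a permutation matrix once $m_1>n-1$, which holds for all large $n$ since $m_1\sim\widehat q_1\binom n2$; so this condition is eventually deterministic.

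Combining these, all three hypotheses of Proposition~\ref{prop:kpoly}.\ref{poly3} hold asymptotically almost surely, which gives the claim. I expect the main obstacle to be the nonsingularity step: one must check that the exponentially small singularity estimate for the Bernoulli model genuinely survives division by the polynomially small normalising probability $\pp(F=m_1,B=m_2)$, and that the local-CLT lower bound together with the uniform balance condition~\eqref{eqn:balance-bernoulli} persist along the whole sequence---both of which are guaranteed precisely by the hypothesis~\eqref{eqn:mbalance} keeping each $q_i$ bounded away from $0$ and $1$.
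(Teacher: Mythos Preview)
Your proposal is correct and follows essentially the same route as the paper: the same auxiliary Bernoulli model with parameters $q_1,q_2,\tfrac12$, the same conditioning identity yielding the transfer inequality $\pp(\mathbb{U}_{n,m_1,m_2}\in\mathcal{B})\le \pp(\tilde\dd_n\in\mathcal{B})/\pp(F=m_1,B=m_2)$, and the same use of the Bourgain--Vu--Wood exponential singularity bound to beat the $\Theta(n^{-2})$ normaliser. The only difference worth flagging is the treatment of strong connectivity: the paper does \emph{not} push this through the transfer inequality but instead observes directly that the above- and below-diagonal parts of $\mathbb{U}_{n,m_1,m_2}$ are themselves uniform Erd\H{o}s--R\'enyi graphs $\mathbb{E}_{n,m_i}$, each a.a.s.\ connected by \cite[Theorem~4.1]{Frieze:2016tw}, whence strong connectivity of the overlay. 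Your alternative---transferring an exponential cut bound $\sum_s\binom{n}{s}(1-\varepsilon)^{s(n-s)}$ from the Bernoulli side---also works and has the virtue of being a single uniform mechanism, whereas the paper's direct argument avoids having to verify that the non-connectivity rate is genuinely exponential.
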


\begin{proof}
We adapt the proof of \cite[Lemma 1.2]{Frieze:2016tw}, which addresses undirected graphs. Let $\mathscr{P}=\mathscr{P}_{n,m_1,m_2}\subseteq\mathcal{G}_{n,m_1,m_2}$ be a graph property. Write $N=\binom{n}{2}=\frac{n(n-1)}{2}$ and $q_i=\frac{m_i}{N} \approx \frac{2m_i}{n^2}$, $i=1,2$. Let $A(n)$ be the $n\times n$ matrix whose entries are independent Bernoulli random variables with success probability either $q_1$ (above the diagonal) or $q_2$ (below the diagonal) or $\frac{1}{2}$ (on the diagonal), and let us write $\hh_n$ for the corresponding random graph. We adopt the notational convention suggested after Definition~\ref{def:dnm} and write $\hh_{n,i}$, $i=0,1,2$, for the graphs corresponding to entries on, above or below the diagonal of $A(n)$.

For integers $k_i\ge0$, $i=0,1,2$, with $k_0\le n$ and $k_1,k_2\le N$, the conditional probability that $\hh_n$ has the graph property $\mathscr{P}$, given that  $\hh_{n,i}$ has $k_i$ edges for $i=0,1,2$, is
\begin{align*}
\pp(\hh_n\in\mathscr{P} \mid |\hh_{n,i}^1| &= k_i,\,i=0,1,2)\\
&= \frac{\pp(\hh_n\in\mathscr{P} \wedge |\hh_{n,i}^1| = k_i,\,i=0,1,2)}{\prod_{i=0}^2\pp(|\hh_{n,i}^1| = k_i)}\\
&= \sum_{\substack{E\in\mathscr{P} \\ |E_i^1| = k_i\: \forall i}} \frac{(\frac{1}{2})^{k_0}(\frac{1}{2})^{n-k_0}}{\binom{n}{k_0}(\frac{1}{2})^{k_0}(\frac{1}{2})^{n-k_0}} \cdot \frac{q_1^{k_1}(1-q_1)^{N-k_1}}{\binom{N}{k_1}q_1^{k_1}(1-q_1)^{N-k_1}} \cdot \frac{q_2^{k_2}(1-q_2)^{N-k_2}}{\binom{N}{k_2}q_2^{k_2}(1-q_2)^{N-k_2}}\\
&= \sum_{\substack{E\in\mathscr{P} \\ |E_i^1| = k_i\: \forall i}} \frac{1}{\binom{n}{k_0}\binom{N}{k_1}\binom{N}{k_2}}\\
&= \frac{2^n}{\binom{n}{k_0}} \cdot \pp(\mathbb{U}_{n,k_1,k_2} \in \mathscr{P}) \quad \text{(by definition \eqref{eqn:uniform})}.
\end{align*}
Therefore, by the law of total probability,
\begin{align*}
\pp(\hh_n\in\mathscr{P}) &= \sum_{k_0=0}^n\sum_{k_1,k_2=0}^N \pp(\hh_n\in\mathscr{P} \mid |\hh_{n,i}^1| = k_i,\,i=0,1,2) \cdot \pp(|\hh_{n,i}^1| = k_i,\,i=0,1,2)\\
&= \sum_{k_0=0}^n\sum_{k_1,k_2=0}^N \frac{2^n}{\binom{n}{k_0}} \cdot \pp(\mathbb{U}_{n,k_1,k_2} \in \mathscr{P}) \cdot \pp(|\hh_{n,i}^1| = k_i,\,i=0,1,2)\\
&= \sum_{k_0=0}^n\sum_{k_1,k_2=0}^N \pp(\mathbb{U}_{n,k_1,k_2} \in \mathscr{P}) \cdot \pp(|\hh_{n,1}^1| = k_1) \cdot \pp(|\hh_{n,2}^1| = k_2)\\
&\ge \pp(\mathbb{U}_{n,m_1,m _2} \in \mathscr{P}) \cdot \pp(|\hh_{n,1}^1| = m_1) \cdot \pp(|\hh_{n,2}^1| = m_2)\\
&\ge 	\frac{1}{100\sqrt{m_1m_2}} \pp(\mathbb{U}_{n,m_1,m _2} \in \mathscr{P}) \quad \text{for sufficiently large $n$},
\end{align*}
the latter estimate obtained from Stirling's formula applied to the binomial probabilities
\[
\pp(|\hh_{n,i}^1| = m_i) = \binom{N}{m_i}q_i^{m_i}(1-q_i)^{N-m_i} = \frac{N!}{m_i!(N-m_i)!} q_i^{m_i}(1-q_i)^{N-m_i}, \,i=1,2,
\]
as in the proof of \cite[Lemma 1.2]{Frieze:2016tw}. Note that \eqref{eqn:mbalance} justifies this use of Stirling's formula, as it ensures that the numbers $m_1$, $m_2$, $N-m_1$ and $N-m_2$ are all large when $n$ is. What we have now shown is that
\begin{equation} \label{eqn:contiguity}
\pp(\mathbb{U}_{n,m_1,m _2} \in \mathscr{P}) \le 100\sqrt{m_1m_2} \cdot \pp(\hh_{n,m_1,m_2}\in\mathscr{P}) \quad\text{for all large $n$}.
\end{equation} 
By assumption \eqref{eqn:mbalance}, $M(n)=A(n)^t-I$ (for that matter, $A(n)$ itself) satisfies the requirement 	\eqref{eqn:balance-bernoulli} of Theorem~\ref{thm:bernoulli} for a suitable $\varepsilon < \min\left\{\widehat{q}_1,\widehat{q}_2,1-\widehat{q}_1,1-\widehat{q}_2\right\}$. As mentioned in the proof of Theorem~\ref{thm:bernoulli}, the asymptotic probability that $M(n)$ is singular (which is the relevant graph property $\mathscr{P}$) is $(\sqrt{1-\varepsilon}+o(1))^n$. By \eqref{eqn:contiguity}, the corresponding asymptotic probability for $\mathbb{U}_{n,m_1,m _2}$ is at most $100\sqrt{m_1m_2}(\sqrt{1-\varepsilon}+o(1))^n$, which converges to $0$ as $n\to\infty$. We also know that, with high probability, the undirected graphs $(\mathbb{U}_{n,m_1,m _2})_1$ and $(\mathbb{U}_{n,m_1,m _2})_2$ are connected (see \cite[Theorem 4.1]{Frieze:2016tw}), so $\mathbb{U}_{n,m_1,m _2}$ is strongly connected (and does not consist of a single cycle). The theorem now follows from Proposition~\ref{prop:kpoly}.\ref{poly3}.
\end{proof}

\section{Empirical \texorpdfstring{$K$}{K}-data} \label{section:stats}

In this section, we present some of the data produced by our computer code. We generated samples of size $m=10^5$ for various random Bernoulli digraphs $\dd_{n,q}$, Erd\H{o}s--R\'{e}nyi graphs $\ee_{n,q}$ and regular multigraphs $\g_{n,r}$, and collected the $K$-theoretic data outlined in Section~\ref{section:intro} for several small primes (up to $p=37$) and a couple larger ones ($p=137$ and $p=277$). We also collected the supplemental data outlined in Sections~\ref{subsection:unit},~\ref{subsection:flow},~\ref{subsection:exact} needed to tally flow equivalence to a full shift or exact isomorphism to a Cuntz algebra or Cuntz polygon. Typical sampled graphs are illustrated in Figure~\ref{fig:graph} and Figure~\ref{fig:graph2}.

All collected data were consistent with available theory, as long as the number of vertices was sufficiently large.  In practice, $n=50$ vertices seemed to be large enough to produce expected results (see Table~\ref{table:13ci}), though most of the time we opted for $n=100$. Note that, since our sample sizes are very large, the margin of error for the data is small. Using the normal approximation to the binomial distribution, at the $99\%$ level of confidence this margin is at most
\[
z_{0.005}\left.\sqrt{\frac{x(1-x)}{n}}\right\vert_{x=1/2} \approx \frac{2.576}{2\sqrt{10^5}} \approx 0.004
\]
for any of our proportions of interest. This means that if our sampling procedure were to be repeated many times, and $\widehat{\gamma}$ denotes our empirical estimate of some true probability $\gamma$, then the interval $[\widehat{\gamma}-0.004, \widehat{\gamma}+0.004]$ should contain $\gamma$ approximately 99\% of the time. In practice, the margin of error depends on the variability of the data characteristic being measured, possibly resulting in a narrower confidence interval. See, for example, Table~\ref{table:13ci}, and notice also that the error bars in graphs such as Figure~\ref{fig:bargrapher2} vary from bar to bar.

\subsection{Bernoulli graphs $\dd_{n,q}$}

Here, our emphasis was on  \eqref{eqn:ercuntz} for small primes $p$ and \eqref{eqn:ersylow} for small $p$-groups $G=\zz/(p^N\zz)$ and $G=(\zz/p\zz)^N$. See Figure~\ref{fig:bargrapher2} and Table~\ref{table:dq}, respectively. Conjecture~\ref{conjecture:fullshift} and  Conjecture~\ref{conjecture:bernoulli} are supported by the data in Table~\ref{table:dnqextra} (see also Table~\ref{table:dnqextra2}).

As discussed in Remark~\ref{remark:logn}, sparser Bernoulli digraphs (those for which $q=\log n/n$) exhibit rather different statistics. This expectation was consistent with the data, some of which is presented in Table~\ref{table:dlog}. As for connectivity, note that if $q=(\log n + \omega)/n$ for some function $\omega=\omega(n)$, then
\[
\lim_{n\to\infty} \pp\left(\dd_{n,q} \text{ is strongly connected}\right) = 
 \begin{cases}
 0 & \text{ if }\: \omega\to-\infty\\
 e^{-2e^{-c}} & \text{ if }\: \omega\to c \text{ constant}\\
 1 & \text{ if }\: \omega\to\infty
 \end{cases}
\]
(see \cite[Theorem 13.9]{Frieze:2016tw}, which is derived from the similar behaviour \cite[Theorem 4.1]{Frieze:2016tw} exhibited by the symmetric versions $\ee_{n,q}$). If $n$ is large enough and $q=\log n/n$ (that is, $\omega=0$), we would expect strong connectivity about $e^{-2}\approx13.5\%$ of the time. Our observation of $15.6\%$ for $n=100$ (see Table~\ref{table:dlog}) is not inconsistent with this.

\subsection{Erd\H{o}s--R\'{e}nyi graphs $\ee_{n,q}$}

From \eqref{eqn:ncyclic} and \eqref{eqn:allcyclic}, we can compute the asymptotic cyclicity probabilities for $K_0(\cs(\ee_{n,q}))_p$, for any prime $p$. The recorded data closely agree with these probabilities (see Figure~\ref{fig:bargraphsymer2}). As mentioned in Section~\ref{subsection:exact}, it is expected, though not yet proved, that the asymptotic probability that $K_0(\cs(\ee_{n,q}))$ is cyclic is equal to the product of the respective probabilities over all primes $p$, that is, to
\[
\prod_{p\text{ prime}} \prod_{k=2}^\infty\left(1-p^{-2k+1}\right) \approx 0.79352.
\]
The data in Table~\ref{table:eq} and Table~\ref{table:enqextra} are in accordance with this expectation. Table~\ref{table:enqextra} also contains data for the open Question~\ref{q1} and data in support of Conjecture~\ref{conjecture:bernoulli}.

\subsection{Regular multigraphs $\g_{n,r}$}

The graph algebras $\cs(\g_{n,r})$ are almost never exactly isomorphic to Cuntz polygons (see Question~\ref{q2} and Table~\ref{table:gnrextra}). The asymptotic behaviour of the determinant is also somewhat opaque (see Table~\ref{table:gnrextra} again), so we refrain from conjecturing the asymptotic probability of flow equivalence to a full shift for these graphs, leaving open Question~\ref{q1}.

On the other hand, the purely $K$-theoretic data are easier to understand. With $n=100$, the data collected for $K_0(\cs(\g_{n,r}))_{p}$ for all primes $p$ between $3$ and $37$ (and also $p=137$ and $p=277$) were remarkably in line with the limiting distributions \eqref{eqn:ncyclic} and \eqref{eqn:cyclicn} provided that $p\nmid r-1$. See, for example, Table~\ref{table:13ci}, Figure~\ref{fig:bargraph5} and Figure~\ref{fig:bargraph11}.

For $p=2$, the event ``$K_0(\cs(\g_{n,r}))_{2}$ is cyclic'' tended to be concentrated on a single outcome $K_0(\cs(\g_{n,r}))_{2}=\zz/2^N\zz$, with $N=0$ if $2\nmid r-1$. (On the other hand, note that, in general, $K_0(\cs(\g_{n,r}))_{p}$ cannot be trivial if $p\mid r-1$.) We did however observe some interesting deviations from this pattern (see Figure~\ref{fig:bargraph2a} and Figure~\ref{fig:bargraph2b}), including variable behaviour for a fixed value of $r$ and different $n$ (compare Figure~\ref{fig:bargraph2c} with Figure~\ref{fig:bargraph2d}).

In Table~\ref{table:adams}, $\widehat\gamma_{n,r}$ and $\widehat\pi_{p,r}$ denote our estimators of the probabilities
\[
\gamma_{n,r} := \pp\left(K_0(\cs(\g_{n,r})) \text{ is cyclic}\right)
\]
and
\[
\pi_{p,r} :=  \lim_{n\in2\nn}\pp\left(K_0(\cs(\g_{n,r}))_p \text{ is cyclic}\right).
\]
While we are somewhat begging the question here, as the definition of $\pi_{p,r}$ does assume that the limit exists, note that, from Proposition~\ref{prop:pcyclic},
\begin{equation} \label{eqn:pipr}
\pi_{p,r}=\prod_{k=2}^\infty\left(1-p^{-2k+1}\right)
\end{equation}
if $p\nmid 2(r-1)$. Convergence outside of this setting, that is, for $p=2$ or $p\mid r-1$, does seem to be supported by the data. In fact, if we rely on the heuristic principle that $n=100$ is large enough to indicate asymptotic behaviour, then from Table~\ref{table:adams} we see that $\pi_{2,r}$ is consistently about $0.42$, independently of $r$. As for the provenance of this number, we suspect it to be \eqref{eqn:pipr} adjusted to also include the $k=1$ term. Fascinatingly, this adjustment also appears to govern the asymptotic probability of cyclicity of $K_0(\cs(\g_{n,r}))_{p}$ for primes $p$ dividing $r-1$ (see Table~\ref{table:newdist}). Arguing heuristically as in the proof of Theorem~\ref{thm:bernoulli}.\ref{ber5}, we would also expect $\gamma_{n,r}$ to converge as $n\to\infty$ to $\prod_{p \text{ prime}} \pi_{p,r} =:\gamma_r$. This expectation is indeed supported by the data (see Table~\ref{table:adams} and Table~\ref{table:2k1}). Putting this together, we are led to make the following.

\begin{conjecture} \label{conjecture:regular}
For an integer $r$ and prime $p$, let $\pi_{p,r} := \lim_{n\in2\nn}\pp\left((K_0(\cs(\g_{n,r}))_p \text{ is cyclic}\right)$. Then, for any $r\ge3$:
\begin{enumerate}[1.]
\item $\pi_{p,r} = \prod_{k=1}^\infty\left(1-p^{-2k+1}\right)$ if $p=2$ (in which case, $\pi_{p,r}\approx 0.419$) or if $p$ is odd and $p\mid r-1$;
\item the asymptotic probability that $\cs(\g_{n,r})$ is stably isomorphic to a Cuntz algebra is
\begin{equation} \label{eqn:gnrcuntz}
\gamma_r := \prod_{p \text{ prime}} \pi_{p,r} = \prod_{\substack{p \text{ prime} \\ p \mid 2(r-1)}}\left(1-p^{-1}\right) \prod_{p \text{ prime}} \prod_{k=2}^\infty\left(1-p^{-2k+1}\right) \:(\approx 0.397 \text{ if $r=2^j+1$}).
\end{equation}
\end{enumerate}
\end{conjecture}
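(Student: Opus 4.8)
The plan is to treat the two parts separately: part~1, the identification of the single-prime cyclicity probabilities $\pi_{p,r}$ at the \emph{exceptional} primes $p\mid 2(r-1)$, is the substantive input, while part~2 reduces to an all-primes version of the limit-exchange arguments already appearing in Proposition~\ref{prop:pcyclic} and in \cite{Nguyen:2022uf}.

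For part~1 I would extend the moment method of Nguyen and Wood \cite{Nguyen:2018vh} underlying Theorem~\ref{thm:regular} to the primes it currently excludes. Concretely, one computes the limits of $\ee[\#\Sur(\coker M(n),H)]$ over finite abelian $p$-groups $H$, proves convergence through a universality argument, and inverts the moments to obtain the limiting Sylow-$p$ law of $\coker M(n)$; in the good-prime case this produces \eqref{eqn:kd} with the symmetric-pairing factor $N(G_p)$. The new feature at odd $p\mid r-1$ is the forced relation $M(n)\mathbf{1}=(r-1)\mathbf{1}$ (recall $M(n)=A(n)-I$ for the symmetric matching model), so that $\mathbf{1}$ lies in $\ker(M(n)\bmod p)$ and $\coker M(n)_p$ is asymptotically almost surely \emph{nontrivial}. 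My expectation is that this biases the law into the shape $\coker M(n)_p \cong \zz/p^{e}\oplus G'$, where $e=v_p(r-1)$ is contributed by the primitive $\mathbf{1}$-direction on which $M(n)$ scales by $r-1$, and $G'$ follows the \emph{good} symmetric law \eqref{eqn:kd} on the complementary $(n-1)$-dimensional space.

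Granting this structural description, the computation of $\pi_{p,r}$ becomes transparent and explains the conjectured $(1-p^{-1})$ correction. Since $G'$ is a $p$-group, the sum $\zz/p^{e}\oplus G'$ with $e\ge1$ is cyclic if and only if $G'=0$; hence $\pi_{p,r}=\lim_n\pp(G'=0)$, which by \eqref{eqn:kd} applied to the trivial group equals $N(0)\prod_{k=1}^\infty(1-p^{-2k+1})=\prod_{k=1}^\infty(1-p^{-2k+1})$. This is exactly the good-prime value $\prod_{k=2}^\infty(1-p^{-2k+1})$ of \eqref{eqn:pipr} multiplied by the $k=1$ factor $(1-p^{-1})$ that cancelled in the geometric summation of Proposition~\ref{prop:pcyclic}; the point is that the forced summand removes the sum over $N$ that produced the cancellation. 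The prime $p=2$ must be handled on its own footing, since there the deviation persists for \emph{every} $r$ (not only when $2\mid r-1$): in the matching model $A(n)$ has zero diagonal, so $M(n)\bmod 2$ is a symmetric matrix with all-ones diagonal, that is, effectively a quadratic rather than a bilinear form, whose cokernel statistics differ. I would expect the same shift-by-a-cyclic-summand heuristic to yield $\pi_{2,r}=\prod_{k=1}^\infty(1-2^{-2k+1})$, but this requires a separate universality and distribution calculation.

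For part~2, once the single-prime laws are known at all primes, the identity $\gamma_r=\prod_p\pi_{p,r}$ follows from Proposition~\ref{prop:kpoly}.\ref{poly2} (cyclicity of $K_0$ is equivalent to cyclicity of every Sylow subgroup) together with asymptotic independence of the Sylow subgroups across \emph{all} primes simultaneously; this is precisely the analysis carried out for $\dd_{n,q}$ in \cite{Nguyen:2022uf} and flagged as missing for the regular model in Remark~\ref{remark:regrem}. I would try to reproduce their uniform rank estimates $\rank(M(n)\bmod p)\ge n-1$ across small, medium and large primes for the matching model, and then conclude by a Vitali-type interchange as in \eqref{eqn:c_n-vitali}, now exchanging the $n\to\infty$ limit with the product over primes. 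The main obstacle, and what keeps the statement a conjecture, is twofold: $p=2$ is genuinely hard because the quadratic-form structure over $\zz/2$ and the extra $2$-adic dependencies of the matching construction make both the universality step and the identification of the limiting law delicate; and the all-primes interchange for the regular model is open, since the estimates of \cite{Nguyen:2022uf} lean on independence features of the Bernoulli entries that the matching model lacks. Establishing part~2 rigorously would therefore likely demand new concentration bounds tailored to random regular multigraphs.
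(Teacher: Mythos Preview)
The statement you are addressing is a \emph{conjecture}: the paper does not prove it and does not claim to. The paper's support consists entirely of heuristic reasoning (the discussion preceding the conjecture, pointing to the pattern~\eqref{eqn:pipr} and the analogy with Theorem~\ref{thm:bernoulli}.\ref{ber5}) together with empirical evidence from computer simulations (Tables~\ref{table:adams}, \ref{table:newdist}, \ref{table:2k1} and Figures~\ref{fig:bargraph2a}--\ref{fig:bargraph2d}). There is no proof in the paper for you to compare against.

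That said, your proposal is a coherent outline of how one might \emph{attempt} a proof, and it goes well beyond anything the paper offers. Your structural explanation for the exceptional odd primes $p\mid r-1$---the forced relation $M(n)\mathbf{1}=(r-1)\mathbf{1}$ pinning a $\zz/p^{v_p(r-1)}\zz$ summand, with the complement following the good symmetric law---is plausible and would indeed produce the $(1-p^{-1})$ correction factor exactly as you compute. Your identification of the $p=2$ issue (zero diagonal of $A(n)$ forcing the mod-$2$ reduction into a quadratic-form regime) is also on point, and your candid assessment that this and the all-primes limit interchange for the regular model are the genuine obstacles matches Remark~\ref{remark:regrem} precisely. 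In short: you have written a research programme, not a comparison with the paper's proof, because there is no such proof. The paper's authors would presumably be pleased to see the conjecture attacked along the lines you describe, but as it stands both parts remain open.
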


\begin{figure}[!htbp]
\centering
\includegraphics[scale=0.8]{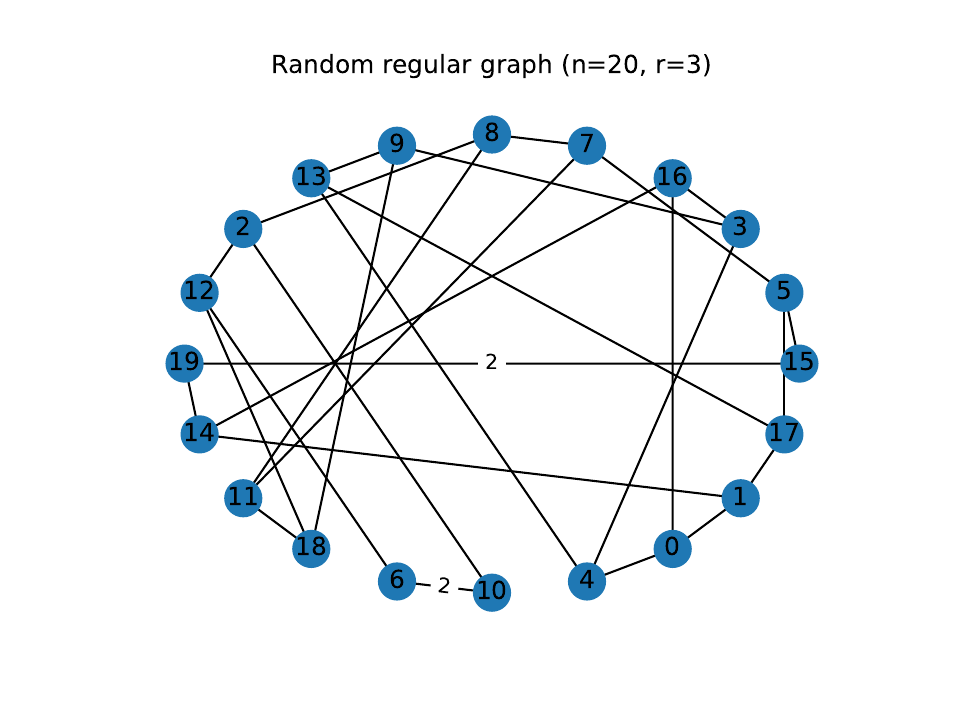}
\caption{Sample generated graph $\g_{20,3}$} \label{fig:graph}
\end{figure}

\begin{figure}[!htbp]
\centering
\includegraphics[scale=0.5]{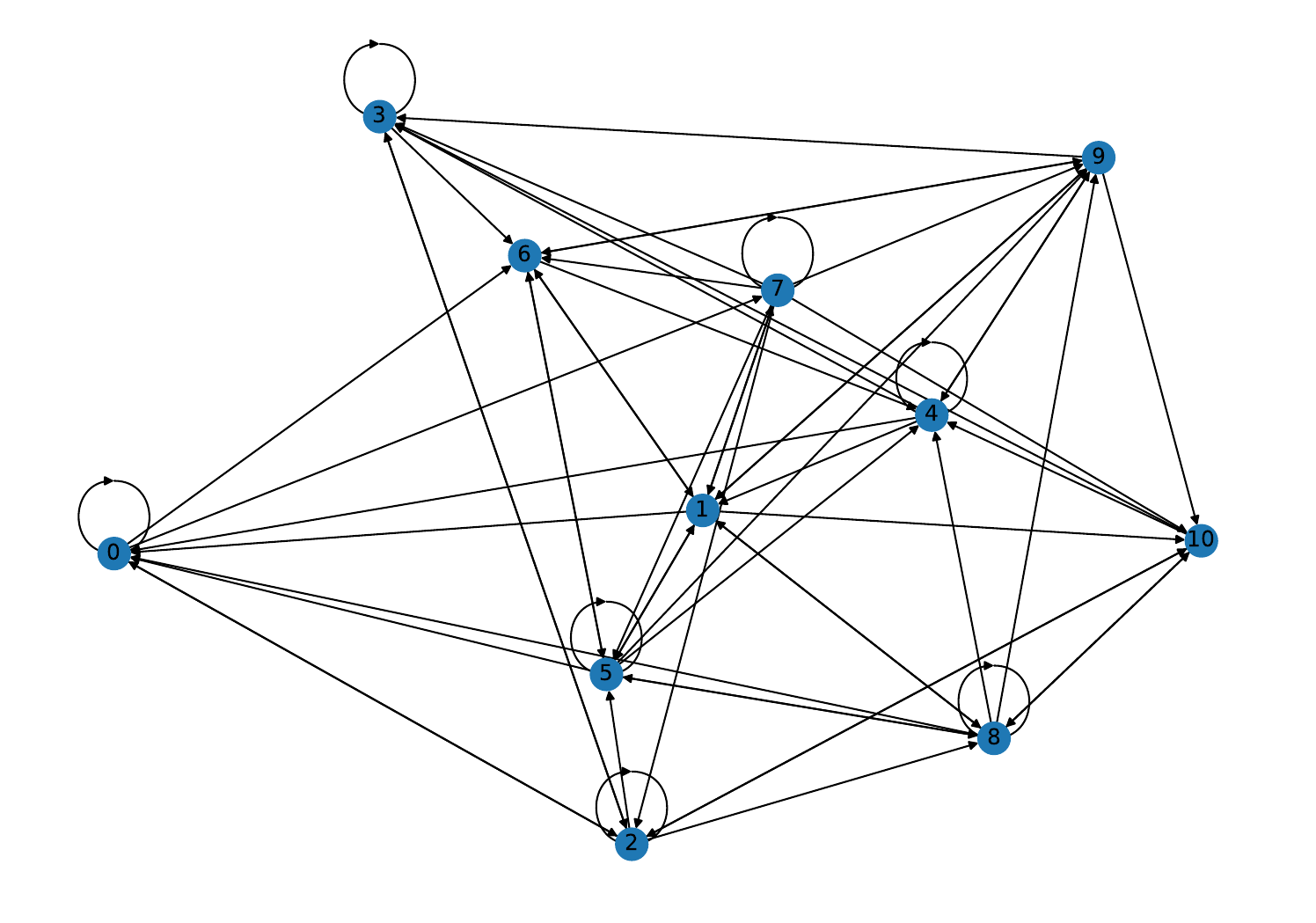}
\caption{Sample generated graph $\dd_{11,1/2}$} \label{fig:graph2} 
\end{figure}

\begin{figure}[!htbp]
\centering
\includegraphics[scale=0.8]{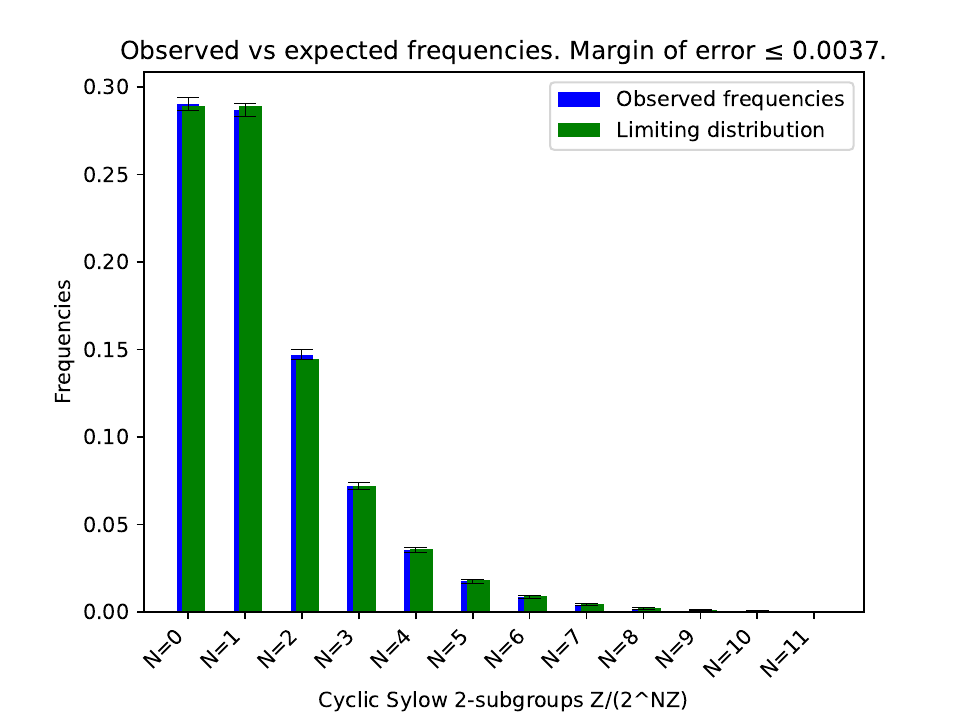}
\caption{Frequency distribution for $10^5$ observations of $K_0(\cs(\dd_{100,1/4}))_{2}$} \label{fig:bargrapher2}
\end{figure}

\begin{table}[!htbp]
\begin{center}
\begin{tabular}{c|c|c|ccccc}
\toprule
\multirow{2}{*}{$(n,q)$} &
\multirow{2}{*}{$\dd_{n,q}$ connected} &
\multirow{2}{*}{$K_1\ne0$} &
\multicolumn{5}{|c}{$(\tor(K_0))_p$ cyclic} \\ \cline{4-8}
& & & all $p$ & $p=2$ & $p=3$ & $p=5$ & $p=7$\\
\hline
$(50,1/2)$ & 100000 & 0 & 84881 & 86769 & 98104 & 99788 & 99954\\
$(100,1/2)$ & 100000 & 0 & 85098 & 86928 & 98086 & 99819 & 99961\\
$(50,1/3)$ & 100000 & 0 & 84597 & 86598 & 97975 & 99784 & 99950\\
$(100,1/3)$ & 100000 & 0 & 84727 & 86676 & 98003 & 99801 & 99952\\
$(50,1/4)$ & 99994 & 0 & 84756 & 86679 & 98057 & 99793 & 99955\\
$(100,1/4)$ & 100000 & 0 & 84586 & 86570 & 97982 & 99805 & 99958\\
\hline
$n\to\infty$ & $10^5-O\left(\frac{1}{n}\right)$ & $10^5\left(\frac{1}{\sqrt{2}}+o(1)\right)^n$ & $84694$ & $86636$ & 98022 & 99794 & 99951\\
\bottomrule
\end{tabular}
\caption{Selected totals for $10^5$ observations of $\cs(\dd_{n,1/k})$, $n=50,100$, $k=2,3,4$} \label{table:dq}
\end{center}
\end{table}

\begin{table}[!htbp]
\begin{center}
\begin{tabular}{c|c|c|c|c|c|c|c}
\toprule
$(n,q)$ & Connected & $K_1\ne0$ & $K_0$ cyclic & $\mathrm{Det}<0$ & Full shift & $\ppm$ & $\oo_m$ \\
\hline
$(50,1/2)$ & 100000 & 0 & 84377 & \textcolor{teal}{49887} & \textcolor{teal}{42186} & \textcolor{purple}{50880} & \textcolor{purple}{43518} \\
$(100,1/2)$ & 100000 & 0 & 84525 & \textcolor{teal}{49963} & \textcolor{teal}{42190} & \textcolor{purple}{50640} & \textcolor{purple}{43481} \\
$(50,1/3)$ & 100000 & 0 & 84825 & \textcolor{teal}{50036} & \textcolor{teal}{42381} & \textcolor{purple}{50695} & \textcolor{purple}{43503} \\
$(100,1/3)$ & 100000 & 0 & 84736 & \textcolor{teal}{50044} & \textcolor{teal}{42410} & \textcolor{purple}{50736} & \textcolor{purple}{43635} \\
$(50,1/4)$ & 99989 & 1 & 84610 & \textcolor{teal}{49966} & \textcolor{teal}{42225} & \textcolor{purple}{50739} & \textcolor{purple}{43581} \\
$(100,1/4)$ & 100000 & 0 & 84636 & \textcolor{teal}{49831} & \textcolor{teal}{42260} & \textcolor{purple}{50764} & \textcolor{purple}{43582} \\
\hline
$n\to\infty$ & $10^5-O\left(\frac{1}{n}\right)$ & $10^5\left(\frac{1}{\sqrt{2}}+o(1)\right)^n$ & $84694$ & \textcolor{teal}{50000} & \textcolor{teal}{42347} & \textcolor{purple}{51451} & \textcolor{purple}{43576} \\
\bottomrule
\end{tabular}
\caption{Selected totals for $10^5$ observations of $\cs(\dd_{n,1/k})$, $n=50,100$, $k=2,3,4$. Numbers marked in \textcolor{purple}{purple} support Conjecture~\ref{conjecture:bernoulli} and those marked in \textcolor{teal}{teal} support Conjecture~\ref{conjecture:fullshift}.} \label{table:dnqextra}
\end{center}
\end{table}

\begin{table}[!htbp]
\begin{center}
\begin{tabular}{c|c|c|c|c|c|c|c}
\toprule
$(n,q)$ & Connected & $K_1\ne0$ & $K_0$ cyclic & $\mathrm{Det}<0$ & Full shift & $\ppm$ & $\oo_m$ \\
\hline
$(50,1/2)$ & 100000 & 0 & 84794 & \textcolor{teal}{50112} & \textcolor{teal}{42478} & \textcolor{purple}{50588} & \textcolor{purple}{43616} \\
$(100,1/2)$ & 100000 & 0 & 84524 & \textcolor{teal}{49976} & \textcolor{teal}{42302} & \textcolor{purple}{50429} & \textcolor{purple}{43337} \\
$(50,1/3)$ & 100000 & 0 & 84726 & \textcolor{teal}{49632} & \textcolor{teal}{42065} & \textcolor{purple}{50709} & \textcolor{purple}{43535} \\
$(100,1/3)$ & 100000 & 0 & 84669 & \textcolor{teal}{49989} & \textcolor{teal}{42394} & \textcolor{purple}{50498} & \textcolor{purple}{43421} \\
$(50,1/4)$ & 99992 & 5 & 84631 & \textcolor{teal}{49833} & \textcolor{teal}{42205} & \textcolor{purple}{50742} & \textcolor{purple}{43553} \\
$(100,1/4)$ & 100000 & 0 & 84706 & \textcolor{teal}{50051} & \textcolor{teal}{42377} & \textcolor{purple}{50437} & \textcolor{purple}{43309} \\
\hline
$n\to\infty$ & $10^5-O\left(\frac{1}{n}\right)$ & $10^5\left(\frac{1}{\sqrt{2}}+o(1)\right)^n$ & $84694$ & \textcolor{teal}{50000} & \textcolor{teal}{42347} & \textcolor{purple}{51451} & \textcolor{purple}{43576}\\
\bottomrule
\end{tabular}
\caption{Selected totals for $10^5$ observations of shifted Bernoulli digraph algebras $\cs(\dd_{n,1/k}+I)$, $n=50,100$, $k=2,3,4$. Numbers marked in \textcolor{purple}{purple} are relevant to Conjecture~\ref{conjecture:bernoulli} and  those marked in \textcolor{teal}{teal} support Theorem~\ref{thm:bf}.} \label{table:dnqextra2}
\end{center}
\end{table}

\begin{table}[!htbp]
\begin{center}
\begin{tabular}{c|c|c|ccccc}
\toprule
\multirow{2}{*}{$q$ ($n=100$)} &
\multirow{2}{*}{$\dd_{100,q}$ connected} &
\multirow{2}{*}{$K_1\ne0$} &
\multicolumn{5}{|c}{$(\tor(\coker(A^t-I)))_p$ cyclic$^*$} \\ \cline{4-8}
& & & all $p$ & $p=2$ & $p=3$ & $p=5$ & $p=7$\\
\hline
$3\log n/n$ & 99993 & 3 & 84617 & 86586 & 97990 & 99781 & 99958\\
$2\log n/n$ & 98606 & 114 & 84880 & 86786 & 98062 & 99805 & 99952\\
$\log n/n$ & 15623 & 8829 & 85713 & 87481 & 98183 & 99801 & 99941\\
$\log n/2n$ & 0 & 51702 & 87968 & 89172 & 98776 & 99866 & 99978\\
\bottomrule
\end{tabular}
\caption{$10^5$ observations of $\cs(\dd_{n,k\log n/n})$, $n=100$, $k=3,2,1,0.5$.\vspace{0.1cm}\ \ \hspace{\textwidth} \footnotesize{$^*$We may no longer have $K_0 \cong \coker(A^t-I)$ as sinks now occur with nonzero probability so would have to be taken into account as in \cite[Theorem 3.2]{Raeburn:2004tg}.}} \label{table:dlog}
\end{center}
\end{table}

\begin{table}[!htbp]
\begin{center}
\begin{tabular}{c|c|c|ccccc}
\toprule
\multirow{2}{*}{$(n,q)$} &
\multirow{2}{*}{$\ee_{n,q}$ connected} &
\multirow{2}{*}{$K_1\ne0$} &
\multicolumn{5}{|c}{$(\tor(K_0))_p$ cyclic} \\ \cline{4-8}
& & & all $p$ & $p=2$ & $p=3$ & $p=5$ & $p=7$\\
\hline
$(50,1/2)$ & 100000 & 0 & 79239 & 83805 & 95846 & 99169 & 99696\\
$(100,1/2)$ & 100000 & 0 & 79325 & 83786 & 95889 & 99125 & 99707\\
$(50,1/3)$ & 100000 & 0 & 79235 & 83784 & 95854 & 99157 & 99703\\
$(100,1/3)$ & 100000 & 0 & 79206 & 83797 & 95883 & 99139 & 99694\\
$(50,1/4)$ & 99997 & 3 & 79306 & 83833 & 95898 & 99150 & 99687\\
$(100,1/4)$ & 100000 & 0 & 79331 & 83822 & 95839 & 99169 & 99697\\
\hline
$n\to\infty$ & $10^5-O\left(\frac{1}{n}\right)$ & $o(1)$ & 79352 & 83884 & 95851 & 99167 & 99702\\
\bottomrule
\end{tabular}
\caption{Selected totals for $10^5$ observations of $\cs(\ee_{n,1/k})$, $n=50,100$, $k=2,3,4$} \label{table:eq}
\end{center}
\end{table}

\begin{table}[!htbp]
\begin{center}
\begin{tabular}{c|c|c|c|c|c|c|c}
\toprule
$(n,q)$ & Connected & $K_1\ne0$ & $K_0$ cyclic & $\mathrm{Det}<0$ & Full shift & $\ppm$ & $\oo_m$ \\
\hline
$(50,1/2)$ & 100000 & 0 & 79379 & \textcolor{teal}{56708} & \textcolor{teal}{45005} & \textcolor{purple}{59431} & \textcolor{purple}{48330} \\
$(75,1/2)$ & 100000 & 0 & 79325 & \textcolor{teal}{50912} & \textcolor{teal}{40336} & \textcolor{purple}{59051} & \textcolor{purple}{48073} \\
$(100,1/2)$ & 100000 & 0 & 79416 & \textcolor{teal}{45762} & \textcolor{teal}{36274} & \textcolor{purple}{59317} & \textcolor{purple}{48281} \\
$(50,1/3)$ & 100000 & 0 & 79286 & \textcolor{teal}{55144} & \textcolor{teal}{43670} & \textcolor{purple}{59335} & \textcolor{purple}{48317} \\
$(75,1/3)$ & 100000 & 0 & 79400 & \textcolor{teal}{45625} & \textcolor{teal}{36269} & \textcolor{purple}{59195} & \textcolor{purple}{48253} \\
$(100,1/3)$ & 100000 & 0 & 79371 & \textcolor{teal}{46400} & \textcolor{teal}{36794} & \textcolor{purple}{59121} & \textcolor{purple}{48224} \\
$(50,1/4)$ & 99999 & 1 & 79526 & \textcolor{teal}{46138} & \textcolor{teal}{36771} & \textcolor{purple}{59643} & \textcolor{purple}{48634} \\
$(75,1/4)$ & 100000 & 0 & 79296 & \textcolor{teal}{47678} & \textcolor{teal}{37797} & \textcolor{purple}{59246} & \textcolor{purple}{48315} \\
$(100,1/4)$ & 100000 & 0 & 79322 & \textcolor{teal}{54237} & \textcolor{teal}{42983} & \textcolor{purple}{59275} & \textcolor{purple}{48218} \\
\hline
$n\to\infty$ & $10^5-O\left(\frac{1}{n}\right)$ & $o(1)$ & 79352 & \textcolor{teal}{$10^5\delta_{n,q}$} & \textcolor{teal}{$10^5\sigma_{n,q}$} & \textcolor{purple}{60793} & \textcolor{purple}{48240} \\
\bottomrule
\end{tabular}
\caption{Selected totals for $10^5$ observations of $\cs(\ee_{n,1/k})$, $n=50,100$, $k=2,3,4$. Numbers marked in \textcolor{purple}{purple} support Conjecture~\ref{conjecture:bernoulli}. Numbers marked in \textcolor{teal}{teal} provide empirical data for the open Question~\ref{q1}. In particular, using the definitions \eqref{eqn:enq-shiftprob}, the values $\delta_{n,q}$ and $\sigma_{n,q}$ in the data table obey the relationship $\sigma_{n,q}\approx0.79352\cdot\delta_{n,q}$, supporting statistical independence of determinant negativity and cyclicity (cf.\ \eqref{eqn:ecyclic}). } \label{table:enqextra}
\end{center}
\end{table}

\begin{figure}[!htbp]
\centering
\includegraphics[scale=0.8]{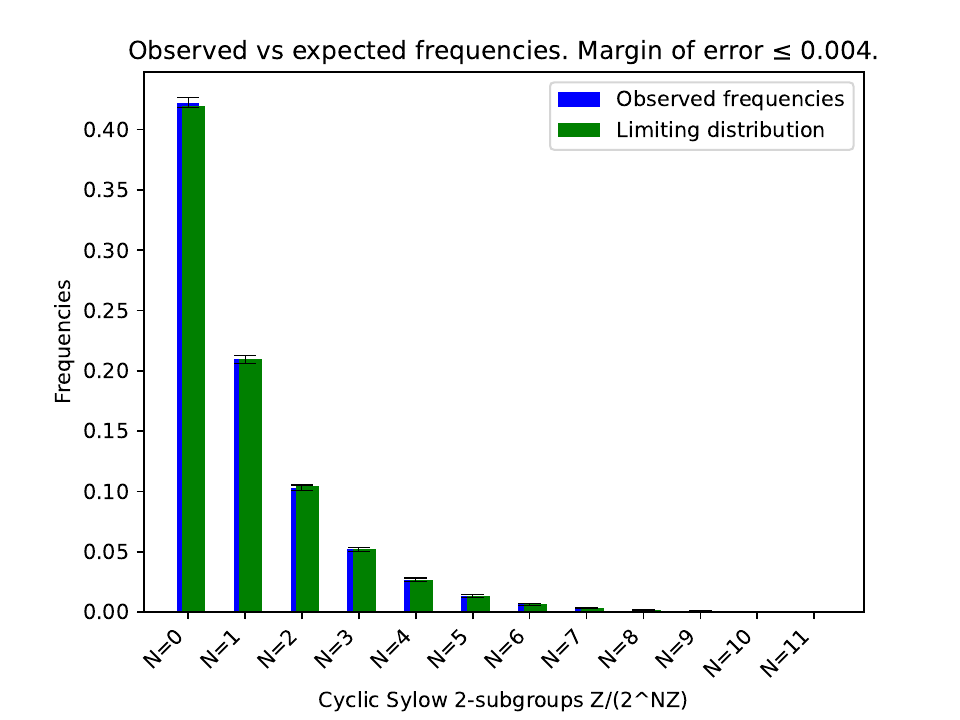}
\caption{Frequency distribution for $10^5$ observations of $K_0(\cs(\ee_{100,1/2}))_{2}$} \label{fig:bargraphsymer2}
\end{figure}

\begin{table}[!htbp]
\begin{center}
\begin{tabular}{c|ccccc}
\toprule
\multirow{2}{*}{$n$} &
\multicolumn{5}{c}{$K_0(\cs(\g_{n,3}))_{13}$} \\ \cline{2-6}
& $0$ & $\zz/13\zz$ & $\zz/13^2\zz$ & $(\zz/13\zz)^2$ & $\zz/13^3\zz$\\
\hline
$n\to\infty$ & 0.92265 & 0.07097 & 0.00546 & 0.00042 & 0.00042\\
200 & $[0.9223, 0.9266]$ & $[0.0672, 0.0713]$ & $[0.0048, 0.0060]$ & $[0.0002, 0.0006]$ & $[0.0002, 0.0006]$\\
100 & $[0.9192, 0.9236]$ & $[0.0700, 0.0742]$ & $[0.0049, 0.0061]$ & $[0.0004, 0.0007]$ & $[0.0002, 0.0005]$\\
60 & $[0.9215, 0.9258]$ & $[0.0677, 0.0718]$ & $[0.0053, 0.0066]$ & $[0.0002, 0.0005]$ & $[0.0002, 0.0005]$\\
50 & $[0.9225, 0.9268]$ & $[0.0672, 0.0713]$ & $[0.0048, 0.0060]$ & $[0.0003, 0.0006]$ & $[0.0001, 0.0004]$\\
20 & $[0.9583, 0.9615]$ & $[0.0360, 0.0391]$ & $[0.0021, 0.0029]$ & $[0.0000, 0.0002]$ & $[0.0000, 0.0000]$\\
10 & $[1.0000, 1.0000]$ & $[0.0000, 0.0000]$ & $[0.0000, 0.0000]$ & $[0.0000, 0.0000]$ & $[0.0000, 0.0000]$\\
\bottomrule
\end{tabular}
\caption{$99\%$ CIs for Sylow 13-subgroups of $K_0(\cs(\g_{n,3}))$, various $n$} \label{table:13ci}
\end{center}
\end{table}

\begin{figure}[!htbp]
\centering
\includegraphics[scale=0.8]{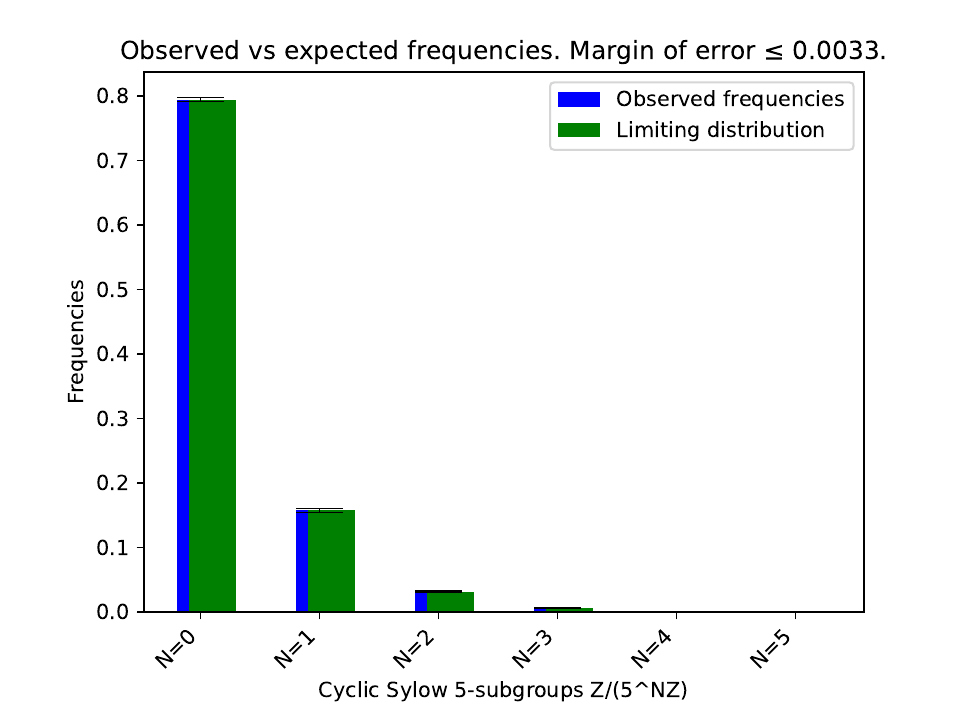}
\caption{Frequency distribution for $10^5$ observations of $K_0(\cs(\g_{200,3}))_{5}$} \label{fig:bargraph5}
\end{figure}

\begin{figure}[!htbp]
\centering
\includegraphics[scale=0.8]{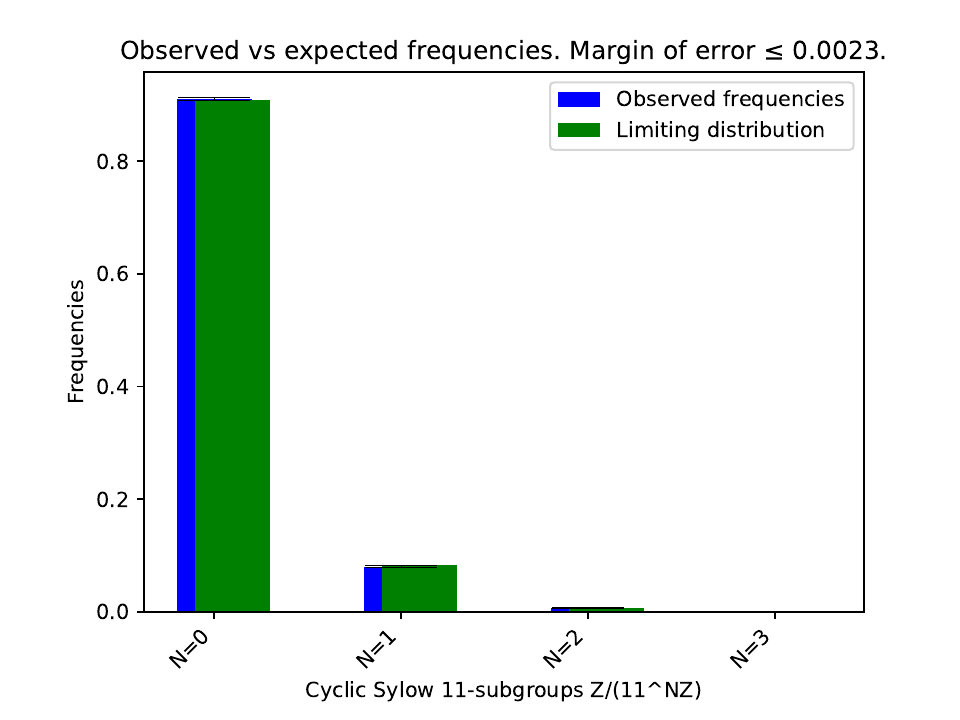}
\caption{Frequency distribution for $10^5$ observations of $K_0(\cs(\g_{100,11}))_{11}$} \label{fig:bargraph11}
\end{figure}

\begin{figure}[!htbp]
\centering
\includegraphics[scale=0.8]{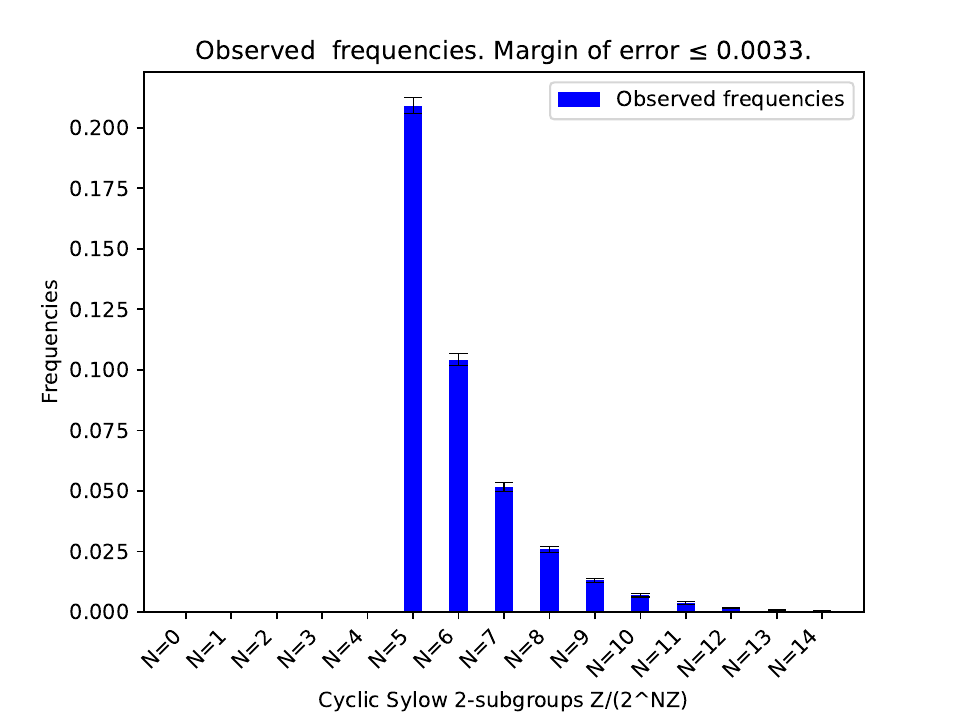}
\caption{Frequency distribution for $10^5$ observations of $K_0(\cs(\g_{100,5}))_{2}$ in support of Conjecture~\ref{conjecture:regular}} \label{fig:bargraph2a}
\end{figure}

\begin{figure}[!htbp]
\centering
\includegraphics[scale=0.8]{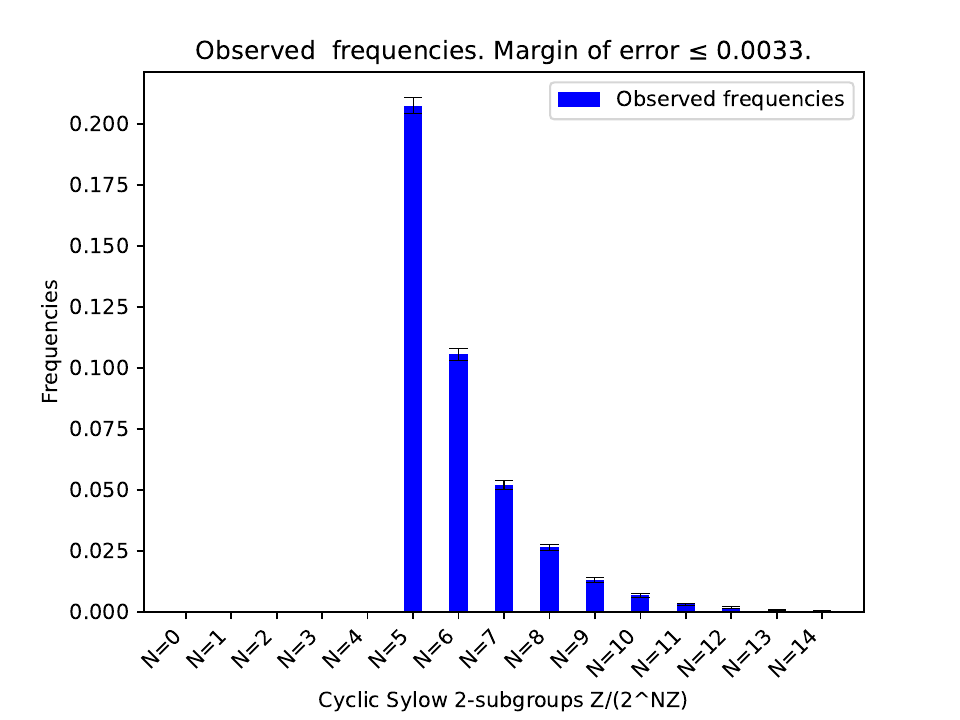}
\caption{Frequency distribution for $10^5$ observations of $K_0(\cs(\g_{100,13}))_{2}$ in support of Conjecture~\ref{conjecture:regular}} \label{fig:bargraph2b}
\end{figure}

\begin{figure}[!htbp]
\centering
\includegraphics[scale=0.8]{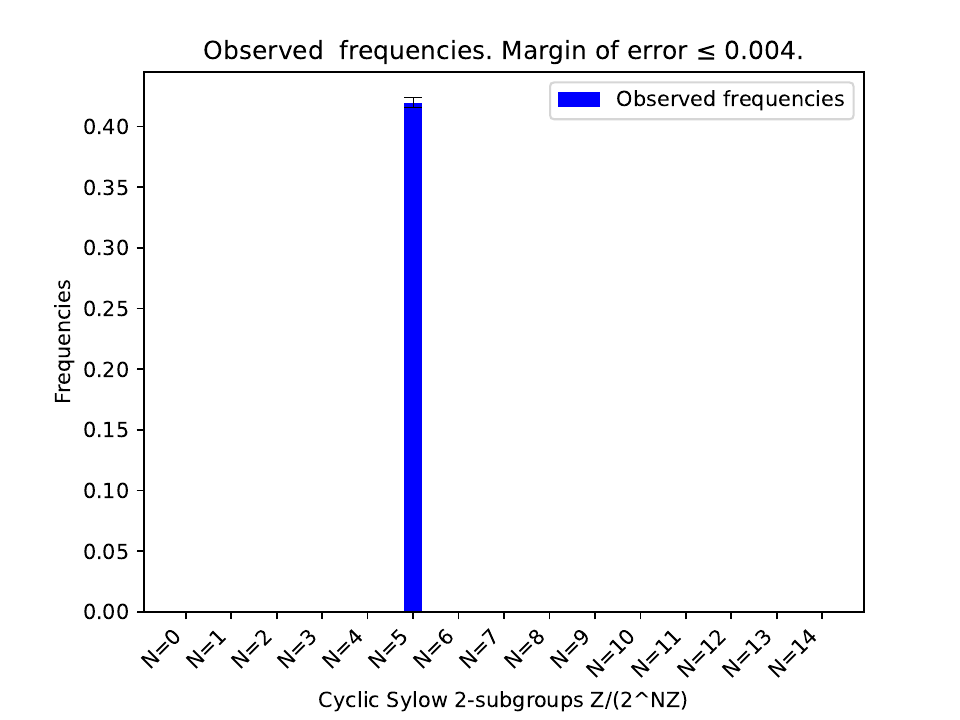}
\caption{Frequency distribution for $10^5$ observations of $K_0(\cs(\g_{100,9}))_{2}$ in support of Conjecture~\ref{conjecture:regular}} \label{fig:bargraph2c}
\end{figure}

\begin{figure}[!htbp]
\centering
\includegraphics[scale=0.8]{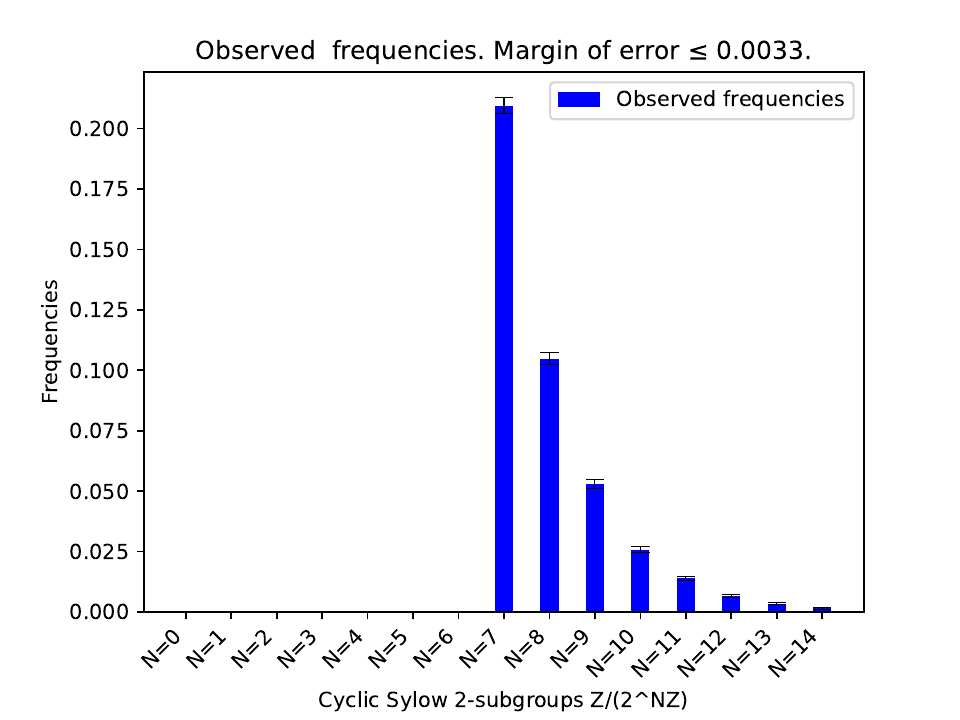}
\caption{Frequency distribution for $10^5$ observations of $K_0(\cs(\g_{200,9}))_{2}$} \label{fig:bargraph2d}
\end{figure}

\begin{table}[!htbp]
\begin{center}
\begin{tabular}{c|ccccccccccc}
\toprule
$r$ & 4 & 5 & 6 & 7 & 8 & 9 & 10 & 11 & 12 & 17\\
\hline
$\widehat\pi_{2,r}$ & \bf 0.416 & \bf 0.418 & \bf 0.419 & \bf 0.415 & \bf 0.416 & \bf 0.420 & \bf 0.420 & \bf 0.420 & \bf 0.418 & \bf 0.419\\
$\prod_{\stackrel{p\text{ prime}}{p\le37}}\widehat\pi_{p,r}$ & 0.264 & 0.395 & 0.317 & 0.262 & 0.338 & 0.396 & 0.265 & 0.318 & 0.359 & 0.397\\
$\widehat\gamma_{100,r}$ & 0.265 & 0.395 & 0.316 & 0.261 & 0.338 & 0.396 & 0.264 & 0.317 & 0.360 & 0.397\\
\bottomrule
\end{tabular}
\caption{Cyclicity frequencies for $10^5$ observations of $K_0(\cs(\g_{100,r}))$, various $r$. Marked in bold are empirical estimates supporting Conjecture~\ref{conjecture:regular}.} \label{table:adams}
\end{center}
\end{table}

\begin{table}[!htbp]
\begin{center}
\begin{tabular}{c|cccccccccccc}
\toprule
$r$ & 6 & 7 & 8 & 10 & 11 & 12 & 13 & 14 & 20\\
$p \mid r-1$ & 5 & 3 & 7 & 3 & 5 & 11 & 3 & 13 & 19\\
$\widehat\pi_{p,r}$ & \bf 0.794 & \bf 0.639 & \bf 0.856 & \bf 0.639 & \bf 0.794 & \bf 0.908 & \bf 0.638 & \bf 0.922 & \bf 0.947\\
$\prod_{k=1}^\infty\left(1-p^{-2k+1}\right)$ & 0.793 & 0.639 & 0.855 & 0.639 &0.793 & 0.908 & 0.639 & 0.923 & 0.947\\
\bottomrule
\end{tabular}
\caption{Cyclicity frequencies for $10^5$ observations of $K_0(\cs(\g_{100,r}))_p$,  $p \mid r-1$. Marked in bold are empirical estimates supporting Conjecture~\ref{conjecture:regular}.} \label{table:newdist}
\end{center}
\end{table}

\begin{table}[!htbp]
\begin{center}
\begin{tabular}{c|c|c|ccccc}
\toprule
\multirow{2}{*}{$r=2^j+1$} &
\multirow{2}{*}{$\g_{100,r}$ connected} &
\multirow{2}{*}{$K_1\ne0$} &
\multicolumn{5}{|c}{$(\tor(K_0))_p$ cyclic} \\ \cline{4-8}
& & & all $p$ & $p=2$ & $p=3$ & $p=5$ & $p=7$\\
\hline
$3$ & 99519 & 57 & \bf 38494 & \bf 40599 & 95942 & 99198 & 99686\\
$5$ & 100000 & 0 & \bf 39460 & \bf 41781 & 95779 & 99152 & 99713\\
$9$ & 100000 & 0 & \bf 39600 & \bf 41978 & 95679 & 99221 & 99655\\
$17$ & 100000 & 0 & \bf 39655 & \bf 41903 & 95910 & 99212 & 99676\\
$33$ & 100000 & 0 & \bf 39879 & \bf 42133 & 95934 & 99121 & 99724\\
\hline
$n\to\infty$ & $10^5-O\left(\frac{1}{n}\right)$ & $o(1)$ & $10^5\gamma_{2^j+1}$ & $10^5\pi_{2,2^j+1}$ & 95851 & 99167 & 99702\\
\bottomrule
\end{tabular}
\caption{Selected totals for $10^5$ observations of $\cs(\g_{100,2^j+1})$, $1\le j\le 5$. Marked in bold are empirical estimates supporting Conjecture~\ref{conjecture:regular}.} \label{table:2k1}
\end{center}
\end{table}

\begin{table}[!htbp]
\begin{center}
\begin{tabular}{c|c|c|c|c|c|c|c}
\toprule
$r$ & Connected & $K_1\ne0$ & $K_0$ cyclic & $\mathrm{Det}<0$ & Full shift & $\ppm$ & $\oo_m$ \\
\hline
$3$ & 99495 & 65 & 38335 &  \textcolor{teal}{55987} & \textcolor{teal}{21448} & \textcolor{purple}{0} & \textcolor{purple}{0} \\
$4$ & 99989 & 0 & 26361 &  \textcolor{teal}{41056} &  \textcolor{teal}{10832} & \textcolor{purple}{0} & \textcolor{purple}{0} \\
$5$ & 100000 & 0 & 39611 &  \textcolor{teal}{58341} &  \textcolor{teal}{23108} & \textcolor{purple}{0} & \textcolor{purple}{0} \\
$6$ & 100000 & 0 & 31373 &  \textcolor{teal}{41669} &  \textcolor{teal}{13153} & \textcolor{purple}{0} & \textcolor{purple}{0} \\
$7$ & 100000 & 0 & 26325 &  \textcolor{teal}{52379} &  \textcolor{teal}{13864} & \textcolor{purple}{0} & \textcolor{purple}{0} \\
$8$ & 100000 & 0 & 34067 &  \textcolor{teal}{57153} &  \textcolor{teal}{19529} & \textcolor{purple}{0} & \textcolor{purple}{0} \\
$9$ & 100000 & 0 & 39401 &  \textcolor{teal}{47150} &  \textcolor{teal}{18616} & \textcolor{purple}{0} & \textcolor{purple}{0} \\
\hline
$n\to\infty$ & $10^5-O\left(\frac{1}{n}\right)$ & $o(1)$ & $10^5 \cdot \gamma_r$ & $\textcolor{teal}{10^5\varepsilon_{n,r}}$ & $\textcolor{teal}{10^5\tau_{n,r}}$ & \textcolor{purple}{0} & \textcolor{purple}{0} \\
\bottomrule
\end{tabular}
\caption{Selected totals for $10^5$ observations of $\cs(\g_{100,r})$, various $r$. Numbers in \textcolor{purple}{purple} provide empirical data for the open Question~\ref{q2}. Numbers marked in \textcolor{teal}{teal} provide empirical data for the open Question~\ref{q1}. In particular, using the definitions \eqref{eqn:gnr-shiftprob}, the values $\varepsilon_{n,r}$ and $\tau_{n,r}$ in the data table obey the relationship $\tau_{n,r}\approx\gamma_r\cdot\varepsilon_{n,r}$, supporting statistical independence of determinant negativity and cyclicity (cf.\ \eqref{eqn:gnrcuntz}).} \label{table:gnrextra}
\end{center}
\end{table}

\clearpage

\end{document}